\lstdefinelanguage{Sage}[]{Python}
{morekeywords={False,sage,True},sensitive=true}
\definecolor{dblackcolor}{rgb}{0.0,0.0,0.0}
\definecolor{dbluecolor}{rgb}{0.01,0.02,0.7}
\definecolor{dgreencolor}{rgb}{0.2,0.4,0.0}
\definecolor{dgraycolor}{rgb}{0.30,0.3,0.30}
\newcommand{\la}{\lambda}
\newcommand{\half}{\frac 12}
\newcommand{\C}{\mathbf C}
\newcommand{\core}[2]{\mathrm{core}_{#2}{#1}}
\newcommand{\quo}[2]{\mathrm{quo}_{#2}{#1}}
\DeclareMathOperator{\bin}{bin}
\newtheorem{theorem}{Theorem}
\newtheorem{lemma}[theorem]{Lemma}
\newtheorem{corollary}[theorem]{Corollary}
\theoremstyle{example}
\theoremstyle{remark}
\newtheorem*{remark*}{Remark}
\title[Representations with non-trivial determinant]{Representations of symmetric groups\\with non-trivial determinant}
\author{Arvind Ayyer}
\address{AA: Department of Mathematics, Indian Institute of Science, Bengaluru 560012, India.}
\email{arvind@math.iisc.ernet.in}
\author{Amritanshu Prasad}
\address{AP: The Institute of Mathematical Sciences, CIT campus, Taramani Chennai 600113, India.}
\email{amri@imsc.res.in}
\author{Steven Spallone}
\address{SS: Indian Institute of Science Education and Research, Pashan, Pune 411008, India.}
\email{sspallone@iiserpune.ac.in}
\keywords{Symmetric group, irreducible representations, permutation representations, determinants, core, quotients, core-towers, Bell numbers}
\subjclass[2010]{05E10, 20C30, 05A17, 05A15}
\begin{document}
\maketitle
\begin{abstract}
  We give a closed formula for the number of partitions $\lambda$ of $n$ such that the corresponding irreducible representation $V_\lambda$ of $S_n$ has non-trivial determinant.
  We determine how many of these partitions are self-conjugate and how many are hooks.
  This is achieved by characterizing the $2$-core towers of such partitions.
  We also obtain a formula for the number of partitions of $n$ such that the associated permutation representation of $S_n$ has non-trivial determinant.
\end{abstract}

\section{Introduction}
\label{sec:introduction}
Let $(\rho, V)$ be a complex representation of the symmetric group $S_n$.
Let $\det:GL_\C(V)\to \C^*$ denote the determinant function.
The composition $\det\circ\rho:S_n\to \C^*$, being a multiplicative character of $S_n$, is either the trivial character or the sign character.
We call $(\rho, V)$ a \emph{chiral representation} if $\det\circ\rho$ is the sign character of $S_n$.

Recall that the irreducible complex representations of $S_n$ are parametrized by partitions of $n$.
For brevity, we shall say that $\lambda$ is a \emph{chiral partition} if the corresponding representation $(\rho_\lambda, V_\lambda)$ is chiral\footnote{The use of the field $\C$ is not important. Our results hold over any field of characteristic different from two, so long as $V_\lambda$ is interpreted as the representation corresponding to $\lambda$ in Young's seminormal form.}.

It is natural to ask, given a positive integer $n$, how many partitions $\lambda$ of $n$ are chiral.
According to Stanley \cite[Exercise~7.55]{ec2}, this problem was first considered by L. Solomon.
We review his contribution, which is explained by Stanley in the exercises in his book, in Section~\ref{sec:2-core-towers-chiral}.
Let $b(n)$ denote the number of chiral partitions of $n$.
The complementary sequence $p(n) - b(n)$ ($p(n)$ denotes the number of partitions of $n$) was added to the Online Encyclopedia of Integer Sequences \cite{Sloane} as sequence \texttt{A045923} in 1999.
It looks mysterious:
\begin{displaymath}
  1, 1, 1, 2, 2, 7, 7, 10, 10, 34, 40, 53, 61, 103, 112, 143, 145, 369, 458, 579,\dotsc,
\end{displaymath}
but the sequence $b(n)$ itself looks much nicer:
\begin{displaymath}
  0, 1, 2, 3, 5, 4, 8, 12, 20, 8, 16, 24, 40, 32, 64, 88, 152, 16, 32, 48,\dotsc.
\end{displaymath}
The most striking result of this paper is a formula for $b(n)$:
\begin{theorem}
  \label{theorem:all}
  If $n$ is an integer having binary expansion
  \begin{equation}
    \label{eq:1}
    n = \epsilon + 2^{k_1} + 2^{k_2} + \dotsb + 2^{k_r},\, \epsilon \in \{0, 1\},\; 0<k_1<k_2<\dotsb <k_r,
  \end{equation}
  then the number of chiral partitions of $n$ is given by
  \begin{displaymath}
    b(n) = 2^{k_2+\dotsb + k_r}\Big(2^{k_1-1} + \sum_{v=1}^{k_1-1} 2^{(v+1)(k_1-2) - \binom v2} + \epsilon 2^{\binom{k_1}2}\Big).
  \end{displaymath}
\end{theorem}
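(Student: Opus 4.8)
The plan is to pass from the representation-theoretic question to a single parity, then to a $2$-adic condition that can be read off the $2$-core tower, and finally to a count organized by the binary expansion \eqref{eq:1}.

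First I would note that $\det\circ\rho_\lambda$ is a linear character of $S_n$, hence determined by its value on the transposition $s_1=(1\,2)$, so $\lambda$ is chiral iff $\det\rho_\lambda(s_1)=-1$. In Young's seminormal form the basis is indexed by standard Young tableaux and $s_1$ acts diagonally: in every tableau $1$ occupies the corner while $2$ lies either to its right (eigenvalue $+1$) or below it (eigenvalue $-1$), so no $2\times2$ blocks occur. Hence $\det\rho_\lambda(s_1)=(-1)^{N_\lambda}$, where $N_\lambda$ is the number of standard tableaux of shape $\lambda$ in which $2$ sits below $1$, and $\lambda$ is chiral iff $N_\lambda$ is odd.

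Next I would compute $N_\lambda$ in closed form. The cell holding $2$ has content $+1$ or $-1$ according as $2$ lies right of or below $1$, so $\sum_T c(\text{cell of }2)=f^\lambda-2N_\lambda$; but this is the eigenvalue trace of the Jucys--Murphy element $(1\,2)$, i.e. $\chi_\lambda$ evaluated at a transposition $\tau$. Together with the central-character formula $\chi_\lambda(\tau)/f^\lambda=C(\lambda)/\binom n2$, where $C(\lambda)=\sum_\square c(\square)$ is the sum of contents, this yields
\begin{displaymath}
N_\lambda=\frac{f^\lambda}{2}\Bigl(1-\frac{C(\lambda)}{\binom n2}\Bigr).
\end{displaymath}
Taking $2$-adic valuations and using the elementary fact $\nu_2\bigl(\binom n2\bigr)=k_1-1$ (read off from \eqref{eq:1} in both parities of $n$), the chirality condition becomes the single equation $\nu_2(f^\lambda)+\nu_2\bigl(\binom n2-C(\lambda)\bigr)=k_1$.

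I would then translate this into a condition on the $2$-core tower. Macdonald's formula gives $\nu_2(f^\lambda)=\sum_i w_i-s_2(n)$, where $w_i$ is the number of nodes at level $i$ and $s_2(n)=r+\epsilon$; meanwhile $C(\lambda)$ satisfies a recursion through the $2$-quotient, which should let one read $\nu_2\bigl(\binom n2-C(\lambda)\bigr)$ off the lower levels of the tower as well. The upshot I expect is a clean characterization of the $2$-core towers of chiral partitions: at every level above $k_1$ the tower is forced to be minimal (a single node realizing each high digit $2^{k_2},\dots,2^{k_r}$), while the behaviour at and below level $k_1$, together with the unit digit $\epsilon$, is constrained but not rigid. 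Counting is then mechanical at the top: each high digit $2^{k_j}$ ($j\ge 2$) is realized by placing one node among the $2^{k_j}$ positions at level $k_j$, giving the free factor $2^{k_2+\dots+k_r}$. The remaining arithmetic concentrates at level $k_1$ and the unit digit, where distributing the permitted excess nodes among the $2^{k_1}$ positions as staircase $2$-cores, subject to the chirality equation, should produce the three terms $2^{k_1-1}$, $\sum_{v=1}^{k_1-1}2^{(v+1)(k_1-2)-\binom v2}$, and $\epsilon\,2^{\binom{k_1}2}$. I expect the main obstacle to be exactly this bottom-level analysis together with the content recursion feeding it: identifying precisely which non-minimal configurations at the base of the tower remain chiral, and organizing the staircase placements so that the exponents $(v+1)(k_1-2)-\binom v2$ emerge naturally. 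The high levels are easy; all the subtlety lives at level $k_1$.
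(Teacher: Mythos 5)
Your setup is correct and coincides exactly with the paper's: the reduction of chirality to the parity of the number $N_\lambda$ (the paper's $g_\lambda$) of standard tableaux with $2$ in the first column, the closed form $g_\lambda = \tfrac{f_\lambda}{2}\bigl(1 - C(\lambda)/\binom n2\bigr)$ via the character value at a transposition, the observation $v_2\binom n2 = k_1 - 1$, the resulting chirality equation $v_2(f_\lambda) + v_2\bigl(\binom n2 - C(\lambda)\bigr) = k_1$, and the use of the hook-length valuation $v_2(f_\lambda) = \sum_i w_i(\lambda) - \nu(n)$ are all precisely Theorems~\ref{theorem:chirality-gla} and~\ref{theorem:oddness} and equation (\ref{eq:3}) of the paper. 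But from that point on your argument is an announcement of what you ``expect,'' not a proof, and the part you defer is the entire mathematical content of the paper's Section~4. Concretely, three things are missing. First, the mechanism relating $C(\lambda)$ to the tower: a rim hook of size $2^i$ with hand-node content $c$ has content sum $2^i c - \binom{2^i}2$, and by Lemma~\ref{lemma:core-tower-entries} the parity of $c$ equals the first bit of the binary address of the corresponding tower entry; this is what converts the valuation equation into conditions on \emph{which half} of a tower row the entries occupy. Second, your claim that ``the high levels are easy'' and automatically minimal is exactly the content of the reduction lemma (Lemma~\ref{lemma:reduction-to-powers-of-2}), which requires the valuation identity $v_2(f_\lambda) = v_2(f_\alpha) + v_2(f_\mu) + \nu(n - |\alpha|) + \nu(|\alpha|) - \nu(n)$ for $\alpha = \core\lambda{2^{k_1+1}}$, subadditivity of $\nu$, and a congruence argument forcing $|\alpha| = 2^{k_1} + \epsilon$ exactly; none of this follows from the chirality equation alone, since one must rule out towers whose upper rows overshoot and cancel $2$-adically.

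Third, and most importantly, the bottom-level case analysis (Lemma~\ref{lemma:powers-of-two}) that actually produces the three terms of the formula is absent: for $v = 0$ the single node at row $k_1$ must lie in the half indexed by $\epsilon$ — this is where the factor $2^{k_1-1}$ rather than $2^{k_1}$ comes from, and your sketch contains nothing that would detect this halving; for $0 < v < k_1$ one must show the lowest occupied row is forced to be exactly $k_1 - v$ with weight $2$ and the two nodes in \emph{opposite} halves (hand-node contents of opposite parity), which after counting positions yields the exponent $(v+1)(k_1-2) - \binom v2$; and for $v = k_1$ one must show chirality forces $w_0(\lambda) = 3$ (apex $(2,1)$), possible only when $\epsilon = 1$, giving $\epsilon\,2^{\binom{k_1}2}$. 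Your phrase ``distributing the permitted excess nodes \dots{} as staircase $2$-cores, subject to the chirality equation'' restates the problem rather than solving it (and is slightly off: since no $2$-core has size $2$, all entries in the chiral towers are single boxes except the apex in the $v = k_1$ case). As it stands, the proposal proves the paper's Section~3 and correctly frames Section~4, but without the rim-hook content computations and the resulting characterization (Theorem~\ref{theorem:chiral-towers}), the formula for $b(n)$ — in particular the constants $2^{k_1-1}$ and the exponents $(v+1)(k_1-2) - \binom v2$ — is not derived.
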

Given a partition $\lambda$, let $f_\lambda$ denote the dimension of $V_\lambda$.
For any integer $m$, let $v_2(m)$ denote the largest among integers $v$ such that $2^v$ divides $m$.
Theorem~1 is a consequence of a more refined counting result:
\begin{theorem}
  \label{theorem:refined}
  If $n$ is an integer having binary expansion (\ref{eq:1}),
  then the number $b_v(n)$ of chiral partitions $\lambda$ of $n$ for which $v_2(f_\lambda) = v$ is given by
  \begin{displaymath}
    b_v(n) = 2^{k_2+\dotsb+k_r}\times
    \begin{cases}
      2^{k_1-1} & \text{if } v =0,\\
      2^{(v+1)(k_1-2)-\binom v2} &\text{if } 0<v<k_1,\\
      \epsilon 2^{\binom{k_1}2} &\text{if } v = k,\\
      0 &\text{if } v> k_1.
    \end{cases}
  \end{displaymath}
\end{theorem}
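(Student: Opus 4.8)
The plan is to convert both requirements -- chirality and $v_2(f_\lambda)=v$ -- into conditions on the $2$-core tower, and then to count. I would use the bijection between partitions of $n$ and $2$-core towers together with Macdonald's formula $v_2(f_\lambda)=N(\lambda)-s_2(n)$, where $N(\lambda)$ is the total number of boxes in the tower and $s_2(n)$ the number of $1$'s in the binary expansion, so $s_2(n)=r+\epsilon$. Thus $v_2(f_\lambda)=v$ says exactly that the tower carries $N=v+r+\epsilon$ boxes, and $b_v(n)$ is the number of chiral towers with that many boxes.

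For chirality I would first record the transposition criterion. In Young's seminormal form a transposition $\tau=(1\,2)$ acts by $\pm1$ on each standard-tableau basis vector according as $1,2$ lie in the same row or column, so $\det\rho_\lambda(\tau)=(-1)^{c_\lambda}$, where $c_\lambda$ counts standard tableaux of shape $\lambda$ with $1,2$ in the same column. Combining $c_\lambda=\tfrac12(f_\lambda-\chi_\lambda(\tau))$ with the content evaluation $\chi_\lambda(\tau)=f_\lambda\,C(\lambda)/\binom n2$, where $C(\lambda)=\sum_{\square}c(\square)$, gives
\[
v_2(c_\lambda)=v_2(f_\lambda)+v_2\!\left(\tbinom n2-C(\lambda)\right)-1-v_2\!\left(\tbinom n2\right).
\]
Since $v_2(\binom n2)=k_1-1$ in every case, $\lambda$ is chiral (i.e. $c_\lambda$ is odd) if and only if $v_2(f_\lambda)+v_2(\binom n2-C(\lambda))=k_1$. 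As the second valuation is nonnegative this immediately gives $b_v(n)=0$ for $v>k_1$, and for fixed $v$ it reduces chirality to the single arithmetic condition $v_2\!\left(\binom n2-C(\lambda)\right)=k_1-v$.

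The heart of the matter is to read off this valuation from the tower. Setting $D(\lambda)=\binom{|\lambda|}2-C(\lambda)$ and analysing contents under the $2$-quotient, I would establish (for empty $2$-core) the recursion
\[
D(\lambda)=2\,|\lambda^{(0)}|+4\,|\lambda^{(0)}|\,|\lambda^{(1)}|+4\bigl(D(\lambda^{(0)})+D(\lambda^{(1)})\bigr),
\]
with an extra core term when $\core\lambda2\neq\emptyset$. Because each descent into the quotient scales the deeper contributions by $4$, only the part of the tower on levels $\le k_1$ affects $D(\lambda)$ modulo $2^{k_1+1}$, hence only it can affect the condition $v_2(D)=k_1-v\le k_1$. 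Consequently the boxes at the high levels $k_2,\dots,k_r$ of a near-minimal tower are unconstrained: each is a single node placed freely among the $2^{k_i}$ slots of its level, and this is precisely the origin of the common factor $2^{k_2+\cdots+k_r}$ in every case.

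It then remains to count, with correct multiplicity, the admissible configurations on levels $0,\dots,k_1$: each node carries a staircase $\delta_m$ (so its box-count is triangular), the counts $w_\ell$ satisfy $\sum_\ell 2^\ell w_\ell=n$ and $\sum_\ell w_\ell=N$, and the recursion for $D$ must have $2$-adic valuation exactly $k_1-v$. For $v=0$ the profile is forced to be the binary digits of $n$, and the parity constraint halves the slot choices, leaving $2^{k_1-1}$. For $0<v<k_1$ the $v$ extra boxes are produced by repeatedly splitting the box at level $k_1$ (and the unit at level $0$, when present) downward through the low levels; the triangular-number constraint controls how many slot choices each split offers, and assembling these under the valuation condition should yield the exponent $(v+1)(k_1-2)-\binom v2$. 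For $v=k_1$ the splitting must reach level $0$, which is possible exactly when a box is available there, i.e. when $\epsilon=1$, giving $2^{\binom{k_1}2}$ and $0$ otherwise. I expect the main obstacle to be precisely this last enumeration: turning the $2$-adic condition $v_2(D(\lambda))=k_1-v$ into admissible staircase placements level by level, correctly handling the interaction of the triangular-number constraint with the carries implicit in $\sum_\ell 2^\ell w_\ell=n$ as well as the nonempty-core contributions to $D$, and verifying that the resulting count is exactly the stated power of two.
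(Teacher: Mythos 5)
Your setup is exactly the paper's: chirality is detected by the parity of $g_\lambda$ via the formula $g_\lambda = f_\lambda\bigl(\binom n2 - C(\lambda)\bigr)/2\binom n2$ (Theorem~\ref{theorem:chirality-gla}), $v_2\binom n2 = k_1-1$ turns this into the condition $v_2(f_\lambda)+v_2\bigl(\binom n2 - C(\lambda)\bigr)=k_1$, Olsson's Theorem~\ref{theorem:oddness} converts $v_2(f_\lambda)=v$ into a box count on the $2$-core tower, and $b_v(n)=0$ for $v>k_1$ falls out immediately. Your quotient recursion for $D(\lambda)=\binom{|\lambda|}2-C(\lambda)$ is a genuinely different device from the paper's, which instead tracks the content sum of each removed rim-hook of size $2^i$ (namely $C(r)=2^ic-\binom{2^i}2$, yielding congruences $C(\lambda)\equiv C(\core\lambda{2^j})\bmod 2^j$); your recursion checks out on small cases and could in principle substitute for Lemmas~\ref{lemma:reduction-to-powers-of-2} and~\ref{lemma:powers-of-two}.

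But the proof is not actually there: everything after ``it then remains to count'' is conjecture, flagged by your own ``should yield'' and ``I expect the main obstacle to be precisely this last enumeration.'' That obstacle \emph{is} the theorem. Concretely, for $0<v<k_1$ you must prove two things the sketch never establishes. First, among all weight profiles with $\sum_i w_i=\nu(n)+v$ and $\sum_i 2^iw_i=n$, chirality forces exactly $w_{k_1-v}=2$, $w_i=1$ for $k_1-v<i<k_1$ and $i\in\{k_2,\dotsc,k_r\}$, and $w_0=\epsilon$; other profiles with the correct box count exist (e.g.\ $n=16$, $v=3$ admits $w_2=4$) and must be ruled out --- the paper does this by showing that if the smallest positive row $j$ with $w_j>0$ exceeds $k_1-v$, then $v_2\bigl(\binom n2-C(\lambda)\bigr)>k_1-v$. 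Second, within the forced profile, chirality holds if and only if the two boxes in row $k_1-v$ lie in opposite halves of that row (hand-node contents of opposite parity); this is what gives the factor $2^{k_1-v-1}\cdot 2^{k_1-v-1}$ rather than an unrestricted count of two slots, and hence the exponent $(v+1)(k_1-2)-\binom v2$. Similarly, for $v=k_1$ the paper shows chirality is equivalent to $w_1(\lambda)$ being odd, which forces the $2$-core to be a partition of $2+\epsilon$; since no $2$-core has size $2$, this forces $\epsilon=1$ and $w_0=3$ --- your ``splitting must reach level $0$'' gives no argument for this. Finally, your reduction discarding rows $k_2,\dotsc,k_r$ needs the nonempty-core cross-terms in your recursion (one at every nonempty tower entry) to be controlled modulo the relevant power of $2$, which you acknowledge but do not carry out. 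Until these steps are done, the stated powers of two are unverified.
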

Theorem~\ref{theorem:refined} follows from the characterization of the $2$-core tower of a chiral partition in Theorem~\ref{theorem:chiral-towers}.
A characterization of $2$-core towers of self-conjugate chiral partitions (see Corollary~\ref{corollary:self-conj} and its proof) gives us: 
\begin{theorem}
  \label{theorem:self-conj-count}
  A positive integer $n$ admits a self-conjugate chiral partition if and only if $n=3$ or $n=2^k+\epsilon$ for some $k\geq 2$ and $\epsilon\in\{0,1\}$.
  There is one self-conjugate chiral partition of $3$.
  When $n = 2^k + \epsilon$ with $k\geq 2$ and $\epsilon\in \{0,1\}$, the number of self-conjugate chiral partitions is $2^{k-2}$.
\end{theorem}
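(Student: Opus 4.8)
The plan is to reduce the problem to counting $2$-core towers and then resolve it by a short case analysis. The engine is the description of the $2$-core towers of self-conjugate chiral partitions in Corollary~\ref{corollary:self-conj}, itself a specialization of Theorem~\ref{theorem:chiral-towers}. I would open by isolating the clean statement behind it: a self-conjugate $\lambda$ is chiral exactly when $v_2(f_\lambda)=1$. To see this, note that the transposition $s=(1\,2)$ acts diagonally on the seminormal basis indexed by standard Young tableaux, with eigenvalue $+1$ on those in which $1,2$ share a row and $-1$ on those in which they share a column, so that $\det\rho_\lambda(s)=(-1)^{N_-}$ with $N_-$ the number of tableaux of the latter kind. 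When $\lambda=\lambda'$, transposition of tableaux interchanges the two kinds, whence $N_-=f_\lambda/2$ and $\lambda$ is chiral if and only if $f_\lambda/2$ is odd, i.e.\ $v_2(f_\lambda)=1$.

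Next I translate the condition to the $2$-core tower. Write $s_2(n)$ for the number of ones in the binary expansion of $n$, so $s_2(n)=r+\epsilon$ in the notation of (\ref{eq:1}), and let the node $u$ of the tower carry the staircase $2$-core with $\binom{a_u+1}{2}$ cells. Macdonald's valuation formula reads $v_2(f_\lambda)=\big(\sum_u\binom{a_u+1}{2}\big)-s_2(n)$, so chirality of a self-conjugate $\lambda$ becomes $\sum_u\binom{a_u+1}{2}=s_2(n)+1$. Self-conjugacy says the tower is fixed by the involution complementing every bit of a node's address; the root is then an (automatically self-conjugate) staircase, while the nodes of each level $i\ge 1$ fall into pairs carrying equal labels. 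Recording the root value $a_0$ and one representative per pair assembles a partition $\mu$, and this gives a bijection between self-conjugate $\lambda\vdash n$ and pairs $(a_0,\mu)$ subject to the bookkeeping identities
\begin{equation*}
  n=\binom{a_0+1}{2}+4|\mu|,\qquad \sum_u\binom{a_u+1}{2}=\binom{a_0+1}{2}+2\big(v_2(f_\mu)+s_2(|\mu|)\big).
\end{equation*}

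It then remains to count the pairs with $\binom{a_0+1}{2}+2\big(v_2(f_\mu)+s_2(|\mu|)\big)=s_2(n)+1$, which I do by splitting on $|\mu|$. If $\mu=\emptyset$ the equation is $m=s_2(m)+1$ with $m=\binom{a_0+1}{2}=n$; since $m-s_2(m)=v_2(m!)$ equals $1$ only for $m\in\{2,3\}$ and $m$ is triangular, this forces $m=3$, i.e.\ $a_0=2$ and $n=3$, contributing the single self-conjugate chiral partition $(2,1)$. If $|\mu|\ge 1$, set $T_0=\binom{a_0+1}{2}$; using $v_2(f_\mu)\ge 0$ and the subadditivity $s_2(n)=s_2(T_0+4|\mu|)\le s_2(T_0)+s_2(|\mu|)$ one gets $s_2(T_0)+1\ge T_0+s_2(|\mu|)\ge T_0+1$, so $s_2(T_0)\ge T_0$ and hence $T_0\in\{0,1\}$, i.e.\ $a_0\in\{0,1\}$. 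For these the addition $T_0+4|\mu|$ is carry-free, every inequality above is an equality, and one reads off $s_2(|\mu|)=1$ and $v_2(f_\mu)=0$. Thus $|\mu|=2^{j}$ with $\mu$ odd-dimensional and $n=2^{j+2}+\epsilon$ with $\epsilon=T_0$; Macdonald's count of odd-degree characters gives exactly $2^{j}$ such $\mu$, that is $2^{k-2}$ for $k=j+2\ge 2$. These solutions are disjoint from the sporadic $n=3$, and for each admissible $n$ the parity of $n$ selects the unique applicable value of $a_0$, so the counts are as claimed and no other $n$ occurs.

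The step I expect to be the main obstacle is the exclusion of the \emph{large core} pairs: proving that whenever $\binom{a_0+1}{2}\ge 3$ no nonempty $\mu$ can satisfy the equation. This is exactly where the quadratic growth of the triangular core size must be played against the additivity of $s_2$ under the carry-free addition $n=\binom{a_0+1}{2}+4|\mu|$, and where the sporadic solution $n=3$ (coming from $\mu=\emptyset$) has to be separated out cleanly. By comparison, setting up the symmetric-tower bijection and invoking Macdonald's two formulas are routine.
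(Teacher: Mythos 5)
Your proof is correct, and it takes a genuinely different route from the paper's. The paper obtains this theorem as Corollary~\ref{corollary:self-conj} to the full characterization of chiral $2$-core towers (Theorem~\ref{theorem:chiral-towers}): since conjugation reflects the tower about the vertical axis, a self-conjugate partition can have no positive-numbered row of weight $1$, which kills the first and third cases of that theorem and all of the second case except $n=2^k+\epsilon$ with $v=1$; the count $2^{k-2}$ then comes from choosing the address, beginning with $0$, of one of the two paired entries in row $k-1$. You bypass Theorem~\ref{theorem:chiral-towers} entirely. Your opening observation --- that for self-conjugate $\lambda$ transposition of tableaux swaps the two eigenspaces of $\rho_\lambda(s_1)$, so $g_\lambda=f_\lambda/2$ and chirality is exactly $v_2(f_\lambda)=1$ --- is correct (equivalently, $C(\lambda)=0$ in (\ref{eq:5})) and is in fact stronger than the corresponding clause in the paper, which asserts this equivalence only for $n=2^k+\epsilon$. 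From there you need only Theorem~\ref{theorem:oddness}, the bijection $\lambda\leftrightarrow(a_0,\mu)$ with $n=\binom{a_0+1}{2}+4|\mu|$ (your $\mu$ is precisely $\lambda_0$: the subtree at address $0$ of the symmetric tower is the $2$-core tower of $\lambda_0$), Legendre's identity $m-\nu(m)=v_2(m!)$ to isolate the sporadic solution $n=3$ from $\mu=\emptyset$, and subadditivity of $\nu$ under the carry-free addition to force $\binom{a_0+1}{2}\in\{0,1\}$, $\nu(|\mu|)=1$ and $v_2(f_\mu)=0$ when $\mu\neq\emptyset$, with Macdonald's count $a(2^{k-2})=2^{k-2}$ finishing the enumeration. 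I verified the case analysis, including the equality analysis after $T_0\le\nu(T_0)$ and the disjointness of the two cases; the step you flag as the main obstacle (excluding large cores with $\mu\neq\emptyset$) is already fully handled by that inequality. The trade-off: the paper's argument is essentially free once its main theorem is in hand and exhibits the admissible towers explicitly, while yours is self-contained and more elementary --- it proves this theorem without any of the content computations of the paper's Section 4 --- at the cost of re-deriving the tower symmetry and invoking Macdonald's formula for $a(n)$.
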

A study of the number $a(n)$ of odd-dimensional representations of $S_n$ was carried out by Macdonald in \cite{macdonald1971degrees}.
In Section~\ref{sec:growth-bn}, we show that $2/5\leq a(n)/b(n)\leq 1$.
It turns out that the growth of $b(n)$ is much slower than that of the partition function.
Our characterization of chiral partitions allows for rapid enumeration and uniform random sampling of the very sparse subset of chiral partitions.

Using only elementary results about the parity of binomial coefficients, we count the number of chiral hooks of size $n$ in Section~\ref{sec:chiral-hooks}.

Another important class of representations of $S_n$ is the class of permutation representations.
For each partition $\lambda = (\lambda_1,\ldots,\lambda_l)$ of $n$ let
\begin{displaymath}
  X_\lambda = \{(X_1,\ldots, X_l)\mid X_1\sqcup\cdots\sqcup X_l = \{1, \ldots, n\},\, |X_i| = \lambda_i\},
\end{displaymath}
the set of all set-partitions of $\{1,\dotsc,n\}$ of shape $\lambda$.
The action of $S_n$ on $\{1,\dotsc,n\}$ gives rise to an action on $X_\lambda$.

The space $\C[X_\lambda]$ of $\C$-valued functions on $X_\lambda$ becomes a representation of $S_n$ whose dimension is the multinomial coefficient $\binom n{\lambda_1,\dotsc,\lambda_l}$.
The characters of $\C[X_\lambda]$ form a basis of the space of class functions on $S_n$.   
In Section~\ref{sec:perm-reps}, we characterize the partitions $\lambda$ of $n$ for which $\C[X_\lambda]$ is chiral (see Theorem~\ref{perm.chir.descr}).
This characterization allows us to compute the number $c(n)$ of such partitions exactly.
Let $\nu(n)$ denote the number of times $1$ occurs in the binary expansion of $n$ (for $n$ as in (\ref{eq:1}), $\nu(n) = r+\epsilon$).
Let $B_k$ denote the $k$th Bell number, which is the number of set-partitions of $\{1, 2, \ldots, k \}$.
\begin{theorem}
  \label{theorem:perm-rep}  Let $n \geq 3$. 
  If $n$ is even, then
  \begin{equation}
    \label{eq:14}
    c(n) = \half ( B_{\nu(n-2)+2}-B_{\nu(n-2)+1}+B_{\nu(n-2)}).
  \end{equation}
  If $n$ is odd, with $v_2(n-1) = k$, then
  \begin{multline}
    \label{eq:15}
    c(n) = \frac{1}{6}(B_{\nu(n-3)+3} - 3B_{\nu(n-3)+3} + 5B_{\nu(n-3)+1} + 2B_{\nu(n-3)})\\
    +B_{\nu(n)+k-2}+B_{\nu(n)}-2B_{\nu(n)-1}.
  \end{multline}
\end{theorem}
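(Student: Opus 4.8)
The plan is to reduce chirality of $\C[X_\lambda]$ to a parity computation at a single transposition, read off from it the combinatorial criterion recorded in Theorem~\ref{perm.chir.descr}, and then enumerate the partitions meeting that criterion by translating everything into set-partitions of binary positions, where the Bell numbers appear. First I would fix a transposition $\tau=(i\,j)$ and observe that, since $\C[X_\lambda]$ is a permutation representation, $\det\rho(\tau)$ is the sign of the involution $\tau$ induces on $X_\lambda$, hence equals $(-1)^m$ with $m$ the number of its two-cycles. A tuple $(X_1,\dots,X_l)$ is fixed exactly when $i,j$ lie in the same block, so pairing up the moved tuples gives
\[
  m \equiv \sum_{a<b}\binom{n-2}{\lambda_1,\dots,\lambda_a-1,\dots,\lambda_b-1,\dots,\lambda_l}\pmod 2 .
\]
By the binary (Lucas) criterion, a multinomial coefficient is odd precisely when its parts add without carries, i.e.\ have pairwise disjoint binary supports; hence $\C[X_\lambda]$ is chiral if and only if the number of \emph{admissible} pairs $\{a,b\}$ (those for which decrementing $\lambda_a,\lambda_b$ leaves a carry-free tuple) is odd. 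This is the input I would take from Theorem~\ref{perm.chir.descr}.

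Next I would control admissibility through the parities of the parts. A carry-free tuple summing to $N$ has exactly $N\bmod 2$ odd parts, since the $2^0$ bit can be carried by at most one part. Writing $o$ for the number of odd parts of $\lambda$, decrementing two parts changes $o$ by $-2,0,+2$ according as both, one, or neither chosen part is odd. For $n$ even, $\mu$ must have no odd parts, which forces $o=2$ with the \emph{unique} candidate pair being the two odd parts; thus $\C[X_\lambda]$ is chiral iff $\lambda$ has exactly two odd parts and decrementing them yields a carry-free partition. For $n$ odd, $\mu$ must have exactly one odd part, which forces either $o=3$ (decrement two of the three odd parts) or $o=1$ (decrement the odd part together with one of the even parts). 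In every case, halving the even parts of $\mu$ converts the carry-free condition into a set-partition of the relevant binary positions, which is what produces the Bell numbers.

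In the even case this gives a clean bijection between chiral $\lambda$ and pairs $(\sigma,D)$, where $\sigma$ is a set-partition of the $\nu(n-2)$ positions of $(n-2)/2$ and $D$ is a set of at most two of its blocks: blocks in $D$ become odd parts after doubling and adding one, and the $2-|D|$ missing choices produce parts equal to $1$. Summing the weight $1+t(\sigma)+\binom{t(\sigma)}{2}$ over set-partitions with $t(\sigma)$ blocks and using
\[
  \sum_\sigma 1 = B_\nu,\qquad \sum_\sigma t(\sigma) = B_{\nu+1}-B_\nu,\qquad \sum_\sigma \binom{t(\sigma)}{2} = \half\bigl(B_{\nu+2}-3B_{\nu+1}+B_\nu\bigr),
\]
with $\nu=\nu(n-2)$, collapses exactly to $\half\bigl(B_{\nu(n-2)+2}-B_{\nu(n-2)+1}+B_{\nu(n-2)}\bigr)$, giving (\ref{eq:14}).

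For $n$ odd I would treat the two sources separately. The $o=3$ contribution is governed by set-partitions of the positions of $n-3$ carrying three distinguished blocks (whence the parameter $\nu(n-3)$ and the normalizing factor $1/3!$), and counting those $\lambda$ for which an odd number of the three possible pairs is admissible produces the first bracket of (\ref{eq:15}). The $o=1$ contribution is the delicate one: decrementing the unique odd part together with an even part $e$ is admissible only when the borrow pattern of $e-1$ avoids collision with the fixed low-order bits of $n$, and it is precisely this interaction that introduces $v_2(n-1)=k$ and yields the terms $B_{\nu(n)+k-2}+B_{\nu(n)}-2B_{\nu(n)-1}$. The main obstacle is this last analysis: controlling the parity of the number of admissible even parts requires tracking how the carries in $e-1$ meet the binary frame of $n$, and only after isolating the role of the block covering the positions below $k$ does the count reorganize into the displayed combination of Bell numbers. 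The even case and the $o=3$ case, by contrast, reduce to the three elementary block-sum identities above.
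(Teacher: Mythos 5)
Your overall architecture coincides with the paper's: your transposition-orbit parity argument is Lemma~\ref{lemma:reduce-two-parts}, your carry-free (Lucas) criterion is Theorem~\ref{theorem:odd-multi}, your case analysis by the number of odd parts is Lemma~\ref{lemma:many-odd-parts}, and your even-case count is correct and complete. Indeed your weighted sum $\sum_\sigma\bigl(1+t(\sigma)+\binom{t(\sigma)}{2}\bigr)$ over set-partitions of $\bin(n-2)$ is a pleasant variant of the paper's device of adjoining two new elements to $\bin(n-2)$ and dividing by symmetry (Lemma~\ref{count2}), and it does collapse to (\ref{eq:14}). Your decremented-tuple formulation of the chirality criterion is in fact the precise one; the ``remove the odd parts'' phrasing of Theorem~\ref{perm.chir.descr} must be read through the equivalence stated at the start of the proof of Lemma~\ref{count2}, and your version handles cases like $\lambda=(3,3)$ correctly.

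The gap is in the odd case, where both contributions are asserted rather than derived. For three odd parts, ``an odd number of the three pairs is admissible'' is not by itself a workable criterion: the step you are missing is the paper's Lemma~\ref{ingredien}, which shows that all three pair-multinomials are congruent mod $2$ to the single multinomial $\binom{n-3}{a-1,\,b-1,\,c-1,\,\mu}$, so the condition collapses to one neatness statement and the count becomes a three-marked-blocks analogue of your even case (this is recoverable; note also that the middle term of (\ref{eq:15}) should read $3B_{\nu(n-3)+2}$, as in Lemma~\ref{count3}). More seriously, the one-odd-part contribution $B_{\nu(n)+k-2}+B_{\nu(n)}-2B_{\nu(n)-1}$, which you defer to ``tracking how the carries in $e-1$ meet the binary frame of $n$,'' is precisely the part the paper itself flags as surprisingly difficult (Theorem~\ref{surprisingly}), and your proposal contains no mechanism to produce it. The paper needs three distinct ingredients there: first, the collapsing identity $\sum_i\binom{n-a-1}{\lambda_1,\dots,\lambda_i-1,\dots,\lambda_m}=\binom{n-a}{\lambda_1,\dots,\lambda_m}$ of Lemma~\ref{lemma:one-odd-part}, which replaces the parity of the number of admissible even parts --- delicate in your pair-by-pair framing when even parts repeat, since equal parts cancel mod $2$ --- by the oddness of the single product $\binom{n-2}{a-1}\binom{n-a}{\mu}$, yielding the clean criterion $\bin(a)\subseteq\bin(n-2)$ with $\mu$ neat; second, the bitwise comparison of $n$ with $n-2$ across the run of length $k=v_2(n-1)$, which organizes the choices of $\bin(a)$ into the triple sum (\ref{eq:19}); and third, the finite-difference Bell-number calculus of Lemma~\ref{lemma:Delta}, proved via Chu--Vandermonde, which evaluates that sum. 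Without a substitute for these steps, your plan proves the even half of the theorem but only restates the odd half.
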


\section{$2$-core Tower of a Partition}
\label{sec:core-tower}
We briefly recall the definition and basic properties of $2$-core towers.
More details can be found in Olsson's monograph \cite{olsson}.

For a partition $\lambda$ and an integer $p>1$, let $\core\lambda p$ denote the $p$-core, and $\quo\lambda p$ denote the $p$-quotient of $\lambda$.
The partition $\core\lambda p$ is what remains of Young diagram of $\lambda$ after successively removing the rims of as many $p$-hooks as possible.
The $p$-quotient $\quo\lambda p$ is a $p$-tuple $(\lambda_0,\dotsc,\lambda_{p-1})$ of partitions.
The total number of cells in $\quo\lambda p$ is the number of $p$-hooks whose rims were removed from $\lambda$ to obtain $\core\lambda p$.
Consequently,
\begin{equation}
  \label{eq:2}
  |\lambda| = |\core\lambda p| + p(|\lambda_0|+\dotsb + |\lambda_{p-1}|).
\end{equation}
The size of the partition $\lambda_k$ in the $p$-quotient is the number of nodes in the Young diagram of $\lambda$ whose hook-lengths are multiples of $p$, and whose hand-nodes have content congruent to $k$ modulo $p$ (by definition, the content of the node $(i,j)$ is $j-i$).
The partition $\lambda$ can be recovered uniquely from $\core\lambda p$ and $\quo\lambda p$.

Given a partition $\lambda$, its $2$-core tower is defined as follows: it has rows numbered by integers $0,1,2,\dotsc$.
The $i$th row of this tower has $2^i$ many $2$-cores.
The $0$th row has the partition $\alpha_\emptyset := \core\lambda{2}$.
The first row consists of the partitions 
\begin{displaymath}
  \alpha_0, \alpha_1,
\end{displaymath}
where, if $\quo\lambda 2 = (\lambda_0,\lambda_1)$, then $\alpha_i = \core{\lambda_i}2$.
Let $\quo{\lambda_i}2 = (\lambda_{i0},\lambda_{i1})$, and define $\alpha_{ij} = \core{\lambda_{ij}}2$.
The second row of the $2$-core tower is 
\begin{displaymath}
  \alpha_{00}, \alpha_{01}, \alpha_{10}, \alpha_{11}.
\end{displaymath}
Inductively, having defined partitions $\lambda_x$ for a binary sequence $x$, define the partitions $\lambda_{x0}$ and $\lambda_{x1}$ by
\begin{equation}
  \label{eq:6}
  \quo{\lambda_x}2 = (\lambda_{x0}, \lambda_{x1}),
\end{equation}
and let $\alpha_{x\epsilon} = \core{\lambda_{x\epsilon}}2$ for $\epsilon=0,1$.
The $i$th row of the $2$-core tower of $\lambda$ consists of the partitions $\alpha_x$, where $x$ runs over the set of all $2^i$ binary sequences of length $i$, listed from left to right in lexicographic order.

Visually, the $2$-core tower is represented as the binary tree:
\begin{displaymath}
  \xymatrix@C=0em{
    &&&&&&& \alpha_{\emptyset}\ar@{-}[dllll]\ar@{-}[drrrr] &&&&&&&\\
    &&&\alpha_{0}\ar@{-}[dll]\ar@{-}[drr] &&&&&&&& \alpha_{1} \ar@{-}[dll]\ar@{-}[drr] &&&\\
    &\alpha_{00}\ar@{-}[dl]\ar@{-}[dr]&&&& \alpha_{01}\ar@{-}[dl]\ar@{-}[dr] &&&&\alpha_{10}\ar@{-}[dl]\ar@{-}[dr]&&&&\alpha_{11}\ar@{-}[dl]\ar@{-}[dr]&\\
    {\begin{matrix}\alpha_{000}\\\vdots\end{matrix}}&&
    {\begin{matrix}\alpha_{001}\\\vdots\end{matrix}}&&
    {\begin{matrix}\alpha_{010}\\\vdots\end{matrix}}&&
    {\begin{matrix}\alpha_{011}\\\vdots\end{matrix}}&&
    {\begin{matrix}\alpha_{100}\\\vdots\end{matrix}}&&
    {\begin{matrix}\alpha_{101}\\\vdots\end{matrix}}&&
    {\begin{matrix}\alpha_{110}\\\vdots\end{matrix}}&&
    {\begin{matrix}\alpha_{111}\\\vdots\end{matrix}}&&
  }
\end{displaymath}
A partition is uniquely determined by its $2$-core tower, which has non-empty partitions in finitely many places.
For example, $(5, 4, 2, 2, 1, 1)$ has $2$-core $(1)$ and $2$-quotient $((2, 2, 1, 1), (1))$.
The partition $(2, 2, 1,1)$ has trivial $2$-core and $2$-quotient $((1,1), (1))$.
Finally, $(1,1)$ has empty $2$-core and quotient $((1),\emptyset)$.
Therefore the $2$-core tower of $(5, 4, 2, 2, 1, 1)$ is given by:
\begin{displaymath}
  \xymatrix@C=0.7em{
    &&&&&&& (1)\ar@{-}[dllll]\ar@{-}[drrrr] &&&&&&&\\
    &&&\emptyset\ar@{-}[dll]\ar@{-}[drr] &&&&&&&& (1) \ar@{-}[dll]\ar@{-}[drr] &&&\\
    &\emptyset\ar@{-}[dl]\ar@{-}[dr]&&&& (1)\ar@{-}[dl]\ar@{-}[dr] &&&&\emptyset\ar@{-}[dl]\ar@{-}[dr]&&&&\emptyset\ar@{-}[dl]\ar@{-}[dr]&\\
    (1)&&\emptyset&&\emptyset&&\emptyset&&\emptyset&&\emptyset&&\emptyset&&\emptyset
  }
\end{displaymath}

Given a partition $\lambda$ of $n$, let $w_i(\lambda)$ denote the sum of the sizes of the partitions in the $i$th row of the $2$-core tower of $\lambda$.
The identity (\ref{eq:2}) implies that
\begin{displaymath}
  n = \sum_{i=0}^\infty w_i(\lambda)2^i.
\end{displaymath}
In particular, the number of non-trivial rows in the $2$-core tower of $\lambda$ is at most the number of digits in the binary expansion of $n$.
Also let $w(\lambda) = \sum_{i\geq 0} w_i(\lambda)$.
Define the \emph{$2$-deviation} of $\lambda$ as
\begin{displaymath}
  e_2(\lambda) = w(\lambda)-\nu(|\lambda|).
\end{displaymath}
The following result gives a formula for the $2$-adic valuation $v_2(f_\lambda)$.
\begin{theorem}
  [{\cite[Proposition~6.4]{olsson}}]
  \label{theorem:oddness}
  For any partition $\lambda$, $v_2(f_\lambda) = e_2(\lambda)$.
\end{theorem}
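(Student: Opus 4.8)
The plan is to derive the statement from the hook length formula together with the recursive structure of the $2$-core tower. By the hook length formula, $f_\lambda = n!/\prod_c h(c)$, where $c$ ranges over the cells of the Young diagram of $\lambda$ and $h(c)$ is the hook length of $c$. Taking $2$-adic valuations gives $v_2(f_\lambda) = v_2(n!) - \sum_c v_2(h(c))$, and Legendre's formula yields $v_2(n!) = n - \nu(n)$. Since $e_2(\lambda) = w(\lambda) - \nu(n)$ by definition, the theorem reduces to the clean identity $\sum_c v_2(h(c)) = n - w(\lambda)$, which I would establish by induction on $n = |\lambda|$.

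For the inductive step I would invoke the standard correspondence between the hook lengths of $\lambda$ divisible by $2$ and the hook lengths of its $2$-quotient. Precisely, there is a bijection between the cells $c$ of $\lambda$ with $2 \mid h(c)$ and the cells of $\quo\lambda 2 = (\lambda_0, \lambda_1)$, under which a cell of hook length $h$ corresponds to a cell of hook length $h/2$. Since cells with odd hook length contribute nothing to $\sum_c v_2(h(c))$, and $v_2(h) = 1 + v_2(h/2)$ for even $h$, this correspondence yields the recursion
\[
  \sum_{c \in \lambda} v_2(h(c)) = (|\lambda_0| + |\lambda_1|) + \sum_{c \in \lambda_0} v_2(h(c)) + \sum_{c \in \lambda_1} v_2(h(c)).
\]

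Next I would feed in two structural facts about the $2$-core tower. First, its total size satisfies $w(\lambda) = |\core\lambda 2| + w(\lambda_0) + w(\lambda_1)$, since the tower of $\lambda$ has $\core\lambda 2$ at its apex and carries the towers of $\lambda_0$ and $\lambda_1$ as its two subtrees (directly from the recursive definition (\ref{eq:6})). Second, identity (\ref{eq:2}) with $p = 2$ reads $|\lambda| = |\core\lambda 2| + 2(|\lambda_0| + |\lambda_1|)$. The base case is when $\lambda$ is itself a $2$-core: then every hook length is odd, so $\sum_c v_2(h(c)) = 0$, while the tower consists of $\lambda$ at the apex and gives $w(\lambda) = |\lambda|$, so the identity holds. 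Otherwise $|\lambda_0|, |\lambda_1| < |\lambda|$, and assuming the identity for $\lambda_0$ and $\lambda_1$, a short substitution shows that both $\sum_c v_2(h(c))$ and $|\lambda| - w(\lambda)$ equal $2(|\lambda_0| + |\lambda_1|) - w(\lambda_0) - w(\lambda_1)$, closing the induction.

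The main obstacle is the hook-length correspondence used in the second paragraph: this is the one genuinely nontrivial input, tying the arithmetic of hook lengths to the $2$-quotient, and establishing it cleanly requires the beta-number (abacus) model of partitions, in which removing $2$-hooks amounts to sliding beads up the two runners and the quotient partitions $\lambda_0,\lambda_1$ are read off the individual runners. Once that correspondence is in hand, everything else is bookkeeping with (\ref{eq:2}) and the recursive definition of the tower.
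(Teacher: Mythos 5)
Your proposal is correct, and all the steps check out: the reduction via the hook length formula and Legendre's formula to the identity $\sum_c v_2(h(c)) = |\lambda| - w(\lambda)$, the base case (a $2$-core has only odd hook lengths, and its tower is concentrated at the apex), the recursion $w(\lambda) = |\core{\lambda}{2}| + w(\lambda_0) + w(\lambda_1)$ read off from (\ref{eq:6}), and the well-foundedness of the induction since (\ref{eq:2}) forces $|\lambda_i| < |\lambda|$ when $\lambda$ is not a $2$-core. Note that the paper itself offers no proof of this statement --- it is quoted from Olsson's monograph --- and your argument is essentially the standard one found there; the single nontrivial input you correctly isolate, that the hook lengths of $\lambda$ divisible by $2$ are, as a multiset, exactly twice the hook lengths of $\quo{\lambda}{2}$, is \cite[Theorem~3.3]{olsson} (the same result the paper invokes in Lemmas~\ref{lemma:core-tower-entries} and~\ref{lemma:core-tower-of-core}), proved via the abacus model just as you indicate.
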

In the running example $\lambda = (5, 4, 2, 2, 1, 1)$, which is a partition of $15$, the $2$-deviation is zero, and so $f_\lambda$ is odd (in fact, $f_{(5, 4, 2, 2, 1, 1)} = 243243$).

Theorem~\ref{theorem:oddness} characterizes partitions parametrizing representations with dimensions of specified $2$-adic valuation in terms of their $2$-core towers.
Theorem~\ref{theorem:refined} follows from a similar characterization of chiral partitions which we are now ready to state:
\begin{theorem}
  \label{theorem:chiral-towers}
  Suppose $n$ is a positive integer with binary expansion (\ref{eq:1}).
  Then a partition $\lambda$ of $n$ is chiral if and only if one of the following happens:
  \begin{enumerate}
  \item \label{item:6} The partition $\lambda$ satisfies
    \begin{displaymath}
      w_i(\lambda) =
      \begin{cases}
        1 &\text{if } i\in\{k_1,\dotsc,k_r\}, \text{ or  if } \epsilon = 1\text{ and } i =0,\\
        0 &\text{otherwise,}
      \end{cases}
    \end{displaymath}
    and the unique non-trivial partition in the $k_1$th row of the $2$-core tower of $\lambda$ is $\alpha_x$, where the binary sequence $x$ of length $k$ begins with $\epsilon$.
    In this case $f_\lambda$ is odd.
  \item \label{item:7} For some $0<v<k_1$,
    \begin{displaymath}
      w_i(\lambda) =
      \begin{cases}
        2 & \text{if } i = k_1-v,\\
        1 & \text{if } k_1-v+1\leq i\leq k_1-1\text{ or }i\in \{k_2,\dotsc,k_r\},\\
        & \text{or if } \epsilon = 1 \text{ and } i=0,\\
        0 & \text{otherwise,}
      \end{cases}
    \end{displaymath}
    and the two non-trivial partitions in the $(k-v)$th row of the $2$-core tower of $\lambda$ are $\alpha_x$ and $\alpha_y$, for binary sequences $x$ and $y$ such that $x$ begins with $0$ and $y$ begins with $1$.
    In this case $v_2(f_\lambda) = v$.
  \item \label{item:8} We have $\epsilon = 1$ and the partition $\lambda$ satisfies
    \begin{displaymath}
      w_i(\lambda) =
      \begin{cases}
        3 & \text{if } i=0,\\
        1 & \text{if } i\in \{1,\dotsc, k_1-1, k_2,\dotsc, k_r\}.
      \end{cases}
    \end{displaymath}
    In this case, $v_2(f_\lambda) = k_1$.
  \end{enumerate}
\end{theorem}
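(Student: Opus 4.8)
The plan is to reduce chirality to a single $2$-adic valuation condition and then read that condition off the $2$-core tower.

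First I would use that $\det\circ\rho_\lambda$ is a linear character of $S_n$, so $\lambda$ is chiral if and only if $\det\rho_\lambda((1,2)) = -1$. In every standard Young tableau of shape $\lambda$ the entry $1$ occupies $(1,1)$ and the entry $2$ occupies either $(1,2)$ or $(2,1)$, so in Young's seminormal form the transposition $(1,2)$ acts semisimply with eigenvalues $\pm 1$ and no nontrivial $2\times 2$ blocks. Hence $\det\rho_\lambda((1,2)) = (-1)^{q(\lambda)}$, where $q(\lambda)$ counts the standard tableaux of shape $\lambda$ with $1$ and $2$ in the same column, and $\lambda$ is chiral iff $q(\lambda)$ is odd. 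Writing $C(\lambda) = \sum_{c\in\lambda}\mathrm{cont}(c)$ and using that the class sum of all transpositions, being the sum of the Jucys--Murphy elements, acts on $V_\lambda$ by the scalar $C(\lambda)$, taking traces gives $\binom n2\,\chi^\lambda((1,2)) = C(\lambda) f_\lambda$, and therefore
\[
  q(\lambda) = \frac{f_\lambda - \chi^\lambda((1,2))}{2} = \frac{f_\lambda\big(\tbinom n2 - C(\lambda)\big)}{n(n-1)}.
\]

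Next I would take $2$-adic valuations. Whichever of $n$ and $n-1$ is even has $2$-adic valuation $k_1$ (in both cases $\epsilon=0$ and $\epsilon=1$), so $v_2(n(n-1)) = k_1$. Combining this with Theorem~\ref{theorem:oddness}, namely $v_2(f_\lambda) = e_2(\lambda) = w(\lambda) - \nu(n)$, the displayed identity yields
\[
  \lambda \text{ is chiral} \iff v_2\big(\tbinom n2 - C(\lambda)\big) = k_1 - e_2(\lambda).
\]
Because $q(\lambda)$ is a nonnegative integer, the left side always majorizes the right, so chirality is exactly the extremal case in which the content deviation $\binom n2 - C(\lambda)$ is as $2$-divisible as the row sizes $w_i(\lambda)$ permit.

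The heart of the argument is then a $2$-adic evaluation of $\binom n2 - C(\lambda)$ through the tower. Using the abacus description of the $2$-quotient I would set up a recursion expressing $C(\lambda)$, modulo a suitable power of $2$, in terms of the contents of the cores $\alpha_x$ and their binary addresses $x$: a cell in the $i$th row contributes at scale $2^i$, with its leading bit governed by the most significant symbol of $x$, so that only the lowest nonempty rows influence $\binom n2 - C(\lambda)$ modulo $2^{k_1+1}$. Feeding in $n = \sum_i w_i(\lambda)2^i$ and the fact that the binary expansion uniquely minimizes the digit sum, one sees that $e_2(\lambda) = w(\lambda)-\nu(n)\ge 0$ vanishes precisely when every $w_i\le 1$ (forcing $f_\lambda$ odd), while increasing $e_2(\lambda)$ forces carries among the $(w_i)$ that are realized exactly by the three tower shapes in the statement.

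Finally I would match the valuation identity against these shapes. In~(\ref{item:6}), $e_2(\lambda)=0$, so the requirement is $v_2(\binom n2 - C(\lambda)) = k_1$; the parity of the coefficient of $2^{k_1}$ is governed by whether the address $x$ of the single row-$k_1$ core begins with $\epsilon$, which is the source of that condition. In~(\ref{item:7}), $e_2(\lambda)=v$ and the two cores in row $k_1-v$ with addresses beginning $0$ and $1$ make their content contributions reinforce so that the required valuation equals $k_1-v$. In~(\ref{item:8}), $e_2(\lambda)=k_1$ and the size-$3$ core $(2,1)$ at the root forces $\binom n2 - C(\lambda)$ to be odd, matching $k_1 - e_2(\lambda)=0$. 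The main obstacle I anticipate is exactly this content computation: tracking $\binom n2 - C(\lambda)$ to sufficient $2$-adic precision through the recursive $2$-quotient, and verifying that the stated conditions on the addresses of the nontrivial cores are precisely what force equality rather than strict inequality in the valuation bound.
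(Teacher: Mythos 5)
Your scaffolding is correct and coincides with the paper's: the seminormal-form eigenvalue count, the identity $g_\lambda = f_\lambda\bigl(\binom n2 - C(\lambda)\bigr)/\bigl(n(n-1)\bigr)$ (Theorem~\ref{theorem:chirality-gla}), and the resulting criterion that $\lambda$ is chiral if and only if $v_2\bigl(\binom n2 - C(\lambda)\bigr) = k_1 - e_2(\lambda)$, with integrality of $g_\lambda$ giving the one-sided bound for free. But everything after that is a sketch of precisely the part that constitutes the proof, and two of your guiding claims are false. First, the locality claim fails at the precision you need: a rim hook $r$ of size $2^i$ with hand-node content $c$ has $C(r) = 2^ic - 2^{i-1}(2^i-1)$, an odd multiple of $2^{i-1}$; in particular each rim hook of size $2^{k_1+1}$ shifts $C(\lambda)$ by $2^{k_1}$ modulo $2^{k_1+1}$, so the rows of the tower above $k_1$ \emph{do} influence $\binom n2 - C(\lambda)$ modulo $2^{k_1+1}$ --- and mod-$2^{k_1+1}$ information is exactly what case~(\ref{item:6}) demands, where the target valuation is $k_1$ itself. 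The paper never works above $2^{k_1}$: Lemma~\ref{lemma:reduction-to-powers-of-2} compares $\lambda$ with its $2^{k_1+1}$-core $\alpha$ using only the congruence $C(\lambda)\equiv C(\alpha) \bmod 2^{k_1}$ together with the a priori bound $v_2\bigl(\binom n2 - C(\lambda)\bigr)\leq k_1$ forced by chirality, supported by the bookkeeping of Lemmas~\ref{lemma:core-tower-of-core} and~\ref{lemma:valuation-of-core} (which also yields the condition $v_2(f_\mu)=0$ governing the rows above $k_1$, whence the digits $k_2,\dotsc,k_r$ appear in all three shapes). You offer no substitute for this reduction step.

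Second, the deviation $e_2(\lambda)$ alone does not pin down the three tower shapes: for $n=2^k$ the profile $w_{k-2}(\lambda)=4$ has $e_2(\lambda)=3$, the same as the admissible profile $w_{k-1}=1$, $w_{k-2}=1$, $w_{k-3}=2$, yet it is not chiral; excluding such profiles \emph{is} the valuation computation you defer. Lemma~\ref{lemma:powers-of-two} does it by taking $j$ minimal positive with $w_j(\lambda)>0$, observing that $w_j$ must be even and that the hook-content sums give $C(\lambda)\equiv 0 \bmod 2^j$, so that $j>k-v$ makes $v_2\bigl(\binom n2 - C(\lambda)\bigr)$ strictly exceed $k-v$; this is what forces $w_{k-v}=2$ and $w_i=1$ for $k-v<i<k$. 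Likewise, passing from the parity of the two hand-node contents to the condition that the addresses $x,y$ begin with $0$ and $1$ is not automatic: it is Lemma~\ref{lemma:core-tower-entries}, resting on Olsson's compatibility of cores and quotients, which you assert (``leading bit governed by the most significant symbol of $x$'') but do not prove. Your treatment of case~(\ref{item:8}) also misattributes the parity source: the $2$-core $(2,1)$ has content sum $0$, and it is the odd number $w_1(\lambda)=1$ of size-$2$ hooks (each contributing the odd quantity $2c-1$) that makes $C(\lambda)$ odd, with $\epsilon=1$ forced because no partition of $2$ is a $2$-core; the case $v>k_1$ (ruled out since $v_2\bigl(2\binom n2\bigr)=k_1$) is absent altogether. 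In short, your framework reproduces Section~\ref{sec:2-core-towers-chiral} of the paper, but the theorem's substance --- the reduction to $n=\epsilon+2^{k_1}$ and the case-by-case $2$-adic content analysis --- is exactly what you flag as the main obstacle and leave undone.
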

Not only does Theorem~\ref{theorem:chiral-towers} prove the enumerative results in Theorems~\ref{theorem:all} and~\ref{theorem:refined}, it also provides fast algorithms to:
\begin{itemize}
\item sequentially enumerate all chiral partitions of $n$ with given $v_2(f_\lambda)$.
\item generate a uniformly random chiral partition of $n$ with given $v_2(f_\lambda)$.
\end{itemize}
It also provides the distribution of $v_2(f_\lambda)$ among the chiral partitions of $n$.
In particular, note that if $\lambda$ is a chiral partition of $n$, then $v_2(f_\lambda)\in [0, k_1-1]$ when $n$ is of the form (\ref{eq:1}) with $\epsilon =0$, and $v_2(f_\lambda)\in [0, k_1]$ when $\epsilon = 1$.

Another interesting consequence of Theorem~\ref{theorem:chiral-towers} is a characterization of self-conjugate chiral partitions:
\begin{corollary}
  \label{corollary:self-conj}
  A positive integer $n$ admits a self-conjugate chiral partition if and only if $n=3$, or $n = 2^k + \epsilon$ for some $k\geq 2$ and $\epsilon \in \{0, 1\}$.
  Moreover, $\lambda$ is a self-conjugate chiral partition of $2^k+\epsilon$ if and only if $\lambda$ is self-conjugate and $v_2(f_\lambda) = 1$. 
  The number of self-conjugate chiral partitions of $2^k+\epsilon$ is $2^{k-2}$ for $k\geq 2$.
\end{corollary}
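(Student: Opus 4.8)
The plan is to deduce everything from Theorem~\ref{theorem:chiral-towers} together with a description of how conjugation acts on the $2$-core tower. First I would record a symmetry lemma: writing $\alpha_x(\lambda)$ for the entry of the $2$-core tower of $\lambda$ at the binary string $x$, and $\bar x$ for the bitwise complement of $x$, one has $\alpha_x(\lambda') = \alpha_{\bar x}(\lambda)$ for every $x$. This follows by induction on $|x|$ from the standard fact that conjugation swaps the two components of the $2$-quotient and conjugates each, i.e.\ if $\quo\lambda2 = (\lambda_0,\lambda_1)$ then $\quo{\lambda'}2 = (\lambda_1',\lambda_0')$: the base case is $\alpha_\emptyset(\lambda') = \core{\lambda'}2 = (\core\lambda2)'$, and the inductive step rewrites $(\lambda')_{x0}$ and $(\lambda')_{x1}$ in terms of $(\lambda_{\bar x})'$ using $\overline{x0} = \bar x1$ and $\overline{x1} = \bar x0$. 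Since every $2$-core is a self-conjugate staircase, the conjugation on the cores is invisible, and the lemma yields the clean criterion: \emph{$\lambda$ is self-conjugate if and only if $\alpha_x = \alpha_{\bar x}$ for all $x$.}

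The key consequence I would extract is a parity constraint. For $i\geq 1$ the complement has no fixed points on strings of length $i$, so $x\mapsto\bar x$ partitions the $i$th row into pairs; for self-conjugate $\lambda$ the equalities $|\alpha_x| = |\alpha_{\bar x}|$ then force $w_i(\lambda)$ to be even whenever $i\geq 1$. I would now run this against the three cases of Theorem~\ref{theorem:chiral-towers}. In case~\ref{item:6} one has $w_{k_1} = 1$ with $k_1\geq 1$, which is odd, so no self-conjugate partition arises. In case~\ref{item:8} the rows $i\in\{1,\dots,k_1-1\}\cup\{k_2,\dots,k_r\}$ all carry weight $1$, so evenness forces $k_1 = 1$ and $r=1$, whence $n = 2^{k_1}+\epsilon = 3$; conversely the resulting tower (with $\alpha_\emptyset$ the staircase of size $3$ and all other entries empty) is self-conjugate, giving the single partition $(2,1)$. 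In case~\ref{item:7} the weight-$1$ rows in the range $k_1-v+1\leq i\leq k_1-1$ and at $k_2,\dots,k_r$ must disappear, forcing $v=1$ and $r=1$; then $w_{k_1-1} = 2$, together with $w_0 = \epsilon$, gives $n = 2^{k_1}+\epsilon$ with $k:=k_1\geq 2$, and $v_2(f_\lambda) = v = 1$.

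With the three cases resolved, the existence statement is immediate: a self-conjugate chiral partition exists exactly when $n=3$ or $n = 2^k+\epsilon$ with $k\geq 2$. For the ``moreover'' assertion with $n=2^k+\epsilon$, $k\geq 2$, the forward direction is the computation above (only case~\ref{item:7} survives, and there $v_2(f_\lambda)=1$); for the converse I would start from a self-conjugate $\lambda$ of size $2^k+\epsilon$ with $v_2(f_\lambda)=1$, use Theorem~\ref{theorem:oddness} to get $w(\lambda) = \nu(n)+1 = 2+\epsilon$, and combine this with $\sum_i w_i(\lambda)2^i = 2^k+\epsilon$ and the evenness of $w_i$ for $i\geq 1$ to show that the only admissible profile is $w_{k-1}=2$, $w_0 = \epsilon$, all other $w_i=0$. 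Self-conjugacy then places the two nontrivial cells at a complementary pair $\{x,\bar x\}$ in row $k-1$, one beginning with $0$ and one with $1$, which is exactly the shape required by case~\ref{item:7}; hence $\lambda$ is chiral. Finally, for the count I would note that such a self-conjugate tower is determined by the choice of the complementary pair $\{x,\bar x\}$ among the $2^{k-1}$ strings of length $k-1$ (the common entry being the staircase $(1)$, and $\alpha_\emptyset=(1)$ being forced when $\epsilon=1$), giving exactly $2^{k-1}/2 = 2^{k-2}$ partitions.

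I expect the main obstacle to be the first paragraph: pinning down the precise effect of conjugation on the $2$-quotient with the indexing conventions used here, namely the swap $(\lambda_0,\lambda_1)\mapsto(\lambda_1',\lambda_0')$, and verifying that the staircase $2$-cores really do make the core-level conjugation disappear, so that self-conjugacy is equivalent to the combinatorial symmetry $\alpha_x=\alpha_{\bar x}$. Once that symmetry and the resulting parity constraint $w_i(\lambda)\in 2\mathbf{Z}$ for $i\geq 1$ are secured, the casework and the counting are entirely routine.
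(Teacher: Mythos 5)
Your proposal is correct and takes essentially the same approach as the paper: you identify the action of conjugation on the $2$-core tower as the reflection $x\mapsto\bar x$ (with the entrywise conjugation invisible because $2$-cores are self-conjugate staircases), deduce a parity constraint on the row weights $w_i(\lambda)$ for $i\geq 1$, run the casework against Theorem~\ref{theorem:chiral-towers}, prove the converse via Theorem~\ref{theorem:oddness}, and count complementary pairs $\{x,\bar x\}$ to get $2^{k-2}$. Your write-up is merely more explicit than the paper's (bitwise-complement indexing and full evenness of $w_i$, where the paper only notes that no positive-numbered row can have weight $1$), but the decomposition, key lemma, and counting are the same.
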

\begin{proof}
  Let $\lambda'$ denote the conjugate of a partition $\lambda$.
  Recall that $\core{(\lambda')}2 = (\core\lambda 2)'$, and if $\quo\lambda 2=(\lambda_0,\lambda_1)$, then $\quo{\lambda'}{2}= (\lambda_1', \lambda_0')$ (see, for example, \cite[Proposition~3.5]{olsson}).
  It follows that the $2$-core tower of $\lambda'$ is obtained by reflecting the $2$-core tower of $\lambda$ about the vertical axis, and then replacing each entry with its conjugate.
  If $\lambda$ is self-conjugate, then its $2$-core tower has to be invariant under this operation.
  In particular, no row numbered by a positive integer can have weight equal to $1$.

  If $n>3$, this can only happen if the case (\ref{item:7}) in Theorem~\ref{theorem:chiral-towers} is realized with $n=2^k+\epsilon$ and $v=1$.

  Now suppose $\lambda$ is a self-conjugate partition of $2^k+\epsilon$ and $v_2(f_\lambda) = 1$.
  By Theorem~\ref{theorem:oddness}, $w_{k-1}(\lambda) = 2$, $w_i(\lambda) = 0$ for $0<i<k-1$, and $w_0(\lambda) = \epsilon$.
  Let $x$ and $y$ be the two binary sequences of length $k-1$ which index the non-empty entries in the $2$-core tower of $\lambda$.
  The symmetry of the $2$-core tower ensures that one begins with $0$ and the other with $1$.
  Thus $\lambda$ satisfies the conditions in (\ref{item:7}), and is therefore chiral.

  In order to count the number of such partitions, note that each of $x$ and $y$ determine each other, and for $x$ beginning with $0$, there are $2^{k-2}$ choices.
\end{proof}
The proof of Theorem~\ref{theorem:chiral-towers} requires the characterization of chiral partitions in terms of counting a class of Young tableaux, which we take up in the next section.

\section{Counting Tableaux}
\label{sec:2-core-towers-chiral}

In Young's seminormal form, the representation $V_\lambda$ has basis indexed by standard tableaux of shape $\lambda$:
\begin{displaymath}
  \{v_T\mid \text{$T$ is a standard tableau of shape $\lambda$}\}.
\end{displaymath}
Let $s_1$ denote the simple transposition $(1,2)$.
The vectors $v_T$ are all eigenvectors for the involution $\rho_\lambda(s_1)$.
Note that, in a standard tableau, $2$ always occurs either in the first row, or in the first column.
We have:
\begin{equation}
  \label{eq:4}
  \rho_\lambda(s_1)v_T =
  \begin{cases}
    v_T & \text{if $2$ lies in the first row of $T$,}\\
    -v_T & \text{if $2$ lies in the first column of $T$.}
  \end{cases}
\end{equation}
Let $g_\lambda$ denote the number of standard tableaux of shape $\lambda$ such that $2$ occurs in the first column of $T$.
The equation (\ref{eq:4}) implies that
\begin{displaymath}
  \det(\rho_\lambda(s_1)) = (-1)^{g_\lambda},
\end{displaymath}
so $\lambda$ is chiral if and only if $g_\lambda$ is odd.
Observe that the trace of $\rho_\lambda(s_1)$ is the difference between the multiplicity of $+1$ as an eigenvalue minus the multiplicity of $-1$ as an eigenvalue.
Since the multiplicity of $+1$ is $f_\lambda - g_\lambda$, and the multiplicity of $-1$ is $g_\lambda$,
\begin{displaymath}
  \mathrm{trace}(\rho_\lambda(s_1)) = f_\lambda - 2g_\lambda.
\end{displaymath}
This is the value of the symmetric group character $\chi_\lambda$ at the class consisting of permutations with cycle decomposition $(2, 1^{n-2})$.
This character value is given by (see \cite[Example~3, p.~11]{macd} or  \cite[Exercise~7.51]{ec2}):
\begin{displaymath}
  \chi_\lambda(2,1^{n-2}) = \frac{f_\lambda}{\binom n2}C(\lambda),
\end{displaymath}
where
\begin{displaymath}
  C(\lambda) = \sum_{(i,j)\in \lambda} (j-i),
\end{displaymath}
and $(i,j)\in \lambda$ is to be understood to mean that $(i,j)$ is a cell in the Young diagram of $\lambda$, or in other words, that $i$ and $j$ are positive integers with $j\leq \lambda_i$ for each $i$ indexing a part of $\lambda$. As a consequence (see Stanley \cite[Exercise~7.55]{ec2}):
\begin{theorem}
  Given a partition $\lambda$ of $n$, we have
\begin{equation}
  \label{eq:5}
  g_\lambda = \frac{f_\lambda\left(\binom n2 - C(\lambda)\right)}{2\binom n2}.
\end{equation}
  \label{theorem:chirality-gla}
  The partition $\lambda$ of $n$ is chiral if and only if $g_\lambda$ is odd.
\end{theorem}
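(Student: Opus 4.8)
The plan is to establish the two assertions separately, both of which follow by assembling the ingredients already in hand. The equivalence ``$\lambda$ is chiral if and only if $g_\lambda$ is odd'' requires essentially no further work: by the computation $\det(\rho_\lambda(s_1)) = (-1)^{g_\lambda}$ recorded above, the character $\det\circ\rho_\lambda$ takes the value $-1$ at the transposition $s_1$ precisely when $g_\lambda$ is odd. Since $\det\circ\rho_\lambda$ is a multiplicative character of $S_n$, hence either trivial or the sign character, and since the sign character is the unique non-trivial character taking the value $-1$ on transpositions, $\det\circ\rho_\lambda$ is the sign character exactly when $g_\lambda$ is odd. This is the definition of $\lambda$ being chiral.

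For the closed formula (\ref{eq:5}), I would equate two expressions for the trace of $\rho_\lambda(s_1)$. On one hand, the eigenvalue count in (\ref{eq:4}) gives $\mathrm{trace}(\rho_\lambda(s_1)) = f_\lambda - 2g_\lambda$, as already noted. On the other hand, since the character $\chi_\lambda$ is a class function and $s_1 = (1,2)$ lies in the conjugacy class of cycle type $(2,1^{n-2})$, we have $\mathrm{trace}(\rho_\lambda(s_1)) = \chi_\lambda(2,1^{n-2})$, and the cited Frobenius-type evaluation gives $\chi_\lambda(2,1^{n-2}) = \tfrac{f_\lambda}{\binom n2} C(\lambda)$. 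Combining these,
\begin{displaymath}
  f_\lambda - 2g_\lambda = \frac{f_\lambda}{\binom n2} C(\lambda).
\end{displaymath}
Solving the resulting linear equation for $g_\lambda$ yields
\begin{displaymath}
  g_\lambda = \frac{1}{2}\left(f_\lambda - \frac{f_\lambda}{\binom n2}C(\lambda)\right) = \frac{f_\lambda\left(\binom n2 - C(\lambda)\right)}{2\binom n2},
\end{displaymath}
which is exactly (\ref{eq:5}).

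There is no serious obstacle here: the substantive inputs --- the eigenvalue description (\ref{eq:4}) of $\rho_\lambda(s_1)$ on the seminormal basis, and the character value $\chi_\lambda(2,1^{n-2})$ --- have already been supplied, so the theorem is a bookkeeping step combining them. The only points deserving a moment's care are the identification of the trace $\mathrm{trace}(\rho_\lambda(s_1))$ with the class-function value $\chi_\lambda(2,1^{n-2})$, which is immediate because characters are constant on conjugacy classes, and the observation that a homomorphism $S_n\to\C^*$ is determined on the generating set of transpositions, so that its value at the single transposition $s_1$ already decides whether it is trivial or the sign.
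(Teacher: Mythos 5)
Your proposal is correct and follows essentially the same route as the paper: the chirality criterion comes from $\det(\rho_\lambda(s_1)) = (-1)^{g_\lambda}$, and the formula for $g_\lambda$ comes from equating the eigenvalue count $\mathrm{trace}(\rho_\lambda(s_1)) = f_\lambda - 2g_\lambda$ with the character value $\chi_\lambda(2,1^{n-2}) = \frac{f_\lambda}{\binom n2}C(\lambda)$ and solving. The paper presents these two steps as the discussion immediately preceding the theorem statement, so your write-up is a faithful reconstruction of its argument.
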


\section{Proof of Theorem~\ref{theorem:chiral-towers}}
The $2$-core tower of a partition records how it can be reduced to its core by removing maximal rim-hooks whose sizes are powers of two.
Theorem~\ref{theorem:chiral-towers} will be proved by analyzing the contributions of the contents of these rim-hooks to $C(\lambda)$.
This is achieved in two parts: the first is a reduction to the case where $n=\epsilon + 2^k$, with $\epsilon\in \{0,1\}$ (Lemma~\ref{lemma:reduction-to-powers-of-2}), and the second is a characterization of chiral partitions of $\epsilon + 2^k$ (Lemma~\ref{lemma:powers-of-two}).
\begin{lemma}
  \label{lemma:core-tower-entries}
  Let $\lambda$ be any partition.
  For each binary sequence $x$, let $\lambda_x$ denote the partition obtained recursively from $\lambda$ by (\ref{eq:6}).
  Fix $\delta \in \{0, 1\}$.
  The nodes of $\lambda_x$, as $x$ runs over all binary sequences of length $i$ beginning with $\delta$, correspond to the nodes of $\lambda$ whose hook-lengths are multiples of $2^i$ and whose hand nodes have content congruent to $\delta$ modulo $2$.
\end{lemma}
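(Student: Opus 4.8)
The plan is to prove the statement by induction on the length $i$ of the binary sequences, using the single-step description of the $2$-quotient as the engine. The case $i=1$ is precisely the defining property of the $2$-quotient recalled in Section~\ref{sec:core-tower}: the nodes of $\lambda_0$ and $\lambda_1$ correspond to the nodes of $\lambda$ whose hook-lengths are even, split according to whether the content of the hand node is even or odd. I would strengthen this base case to the bijective form that is actually needed, namely that there is a bijection between the nodes of $\lambda_\delta$ and the nodes of $\lambda$ with even hook-length and hand-content congruent to $\delta$ modulo $2$, under which a node of hook-length $h$ in $\lambda_\delta$ corresponds to a node of hook-length $2h$ in $\lambda$. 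This hook-length-doubling is the standard arm--leg (abacus) description of the $p$-quotient specialized to $p=2$ (see Olsson~\cite{olsson}): distributing the first-column hook lengths onto two runners according to parity, the beads on a fixed runner form the beta-set of the corresponding quotient, and dividing bead positions by $2$ divides all hook lengths by $2$.

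Accordingly, I would carry the following refined statement through the induction: for every binary sequence $x$ of length $i$ beginning with $\delta$ there is a bijection from the nodes of $\lambda_x$ onto a set of nodes of $\lambda$ such that (a) the images, as $x$ ranges over all such sequences, partition the set of nodes of $\lambda$ with hook-length divisible by $2^i$ and hand-content congruent to $\delta$ modulo $2$, and (b) a node of hook-length $h$ in $\lambda_x$ maps to a node of hook-length $2^i h$ in $\lambda$. Clause (b) is the extra bookkeeping that makes the induction close; the assertion of Lemma~\ref{lemma:core-tower-entries} is exactly clause (a) with the scaling suppressed.

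For the inductive step I would peel off the last bit. A length-$(i+1)$ sequence beginning with $\delta$ has the form $x\epsilon$ with $x$ of length $i$ beginning with $\delta$ and $\epsilon\in\{0,1\}$, and by (\ref{eq:6}) the partition $\lambda_{x\epsilon}$ is the $\epsilon$-component of the $2$-quotient of $\lambda_x$. Applying the strengthened base case to $\lambda_x$, the nodes of $\lambda_{x\epsilon}$ correspond to the nodes of $\lambda_x$ of even hook-length whose hand-content is congruent to $\epsilon$; taking the union over $\epsilon=0,1$ removes this internal parity condition and leaves exactly the nodes of $\lambda_x$ of even hook-length, with hook-lengths doubled. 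Composing with the bijection supplied by the inductive hypothesis for $\lambda_x$ and invoking clause (b), a node of $\lambda_x$ of even hook-length $2h'$ maps to a node of $\lambda$ of hook-length $2^i\cdot 2h' = 2^{i+1}h'$; thus ``even hook-length in $\lambda_x$'' corresponds precisely to ``hook-length divisible by $2^{i+1}$ in $\lambda$''. Ranging over all $x$ and both values of $\epsilon$, clause (a) of the inductive hypothesis then shows that the nodes of the various $\lambda_{x\epsilon}$ are in bijection with the nodes of $\lambda$ of hook-length divisible by $2^{i+1}$ and hand-content congruent to $\delta$ modulo $2$, now with hook-length multiplier $2^{i+1}$. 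This is exactly the refined statement for $i+1$.

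The main obstacle is conceptual rather than computational: it is the correct placement of the hand-content condition. The parity $\delta$ that survives into the conclusion is fixed by the \emph{first} bit of the sequence and is imposed only once, at the outermost application of the quotient map; the parities recorded by the later bits are genuine conditions inside the intermediate partitions $\lambda_x$, but they are summed out when one unions over the trailing bit $\epsilon$. Keeping these two roles distinct---and checking that the hook-length-doubling property is precisely what converts ``even inside $\lambda_x$'' into ``divisible by $2^{i+1}$ in $\lambda$''---is the one place where the argument could go astray. Once the doubling property of a single $2$-quotient step is in hand, the remainder is a routine bookkeeping induction.
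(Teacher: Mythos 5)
Your proof is correct and takes essentially the same route as the paper: the paper's entire proof is the one-line remark that the lemma ``follows by applying \cite[Theorem~3.3]{olsson} recursively,'' and your induction is exactly that recursion written out, with the single-step $2$-quotient correspondence of Olsson's theorem as the base case. Your strengthened induction hypothesis---carrying the hook-length-scaling clause (b) so that ``even hook-length in $\lambda_x$'' translates into ``divisible by $2^{i+1}$ in $\lambda$,'' and observing that only the first bit's parity condition survives while the trailing bits are summed out---is precisely the bookkeeping the paper leaves implicit.
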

\begin{proof}
  The lemma follows by applying \cite[Theorem~3.3]{olsson} recursively.
\end{proof}
\begin{lemma}
  \label{lemma:core-tower-of-core}
  Let $\lambda$ be a partition.
  Then the $2$-core tower of $\core\lambda{2^i}$ is obtained by replacing all the partitions in rows numbered $i$ and larger by the empty partition in the $2$-core tower of $\lambda$.
\end{lemma}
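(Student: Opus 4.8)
The plan is to induct on $i$, the key point being that passing to the $2^i$-core is compatible with the branching operation~(\ref{eq:6}) that builds the $2$-core tower. Writing $\quo\lambda 2 = (\lambda_0,\lambda_1)$, the heart of the argument is the recursion
\begin{displaymath}
  \core{(\core\lambda{2^i})}2 = \core\lambda 2, \qquad \quo{(\core\lambda{2^i})}2 = \big(\core{\lambda_0}{2^{i-1}}, \core{\lambda_1}{2^{i-1}}\big),
\end{displaymath}
valid for every $i\geq 1$. I would prove this on the abacus: represent $\lambda$ by a bead configuration on $2^i$ runners (with the number of beads a multiple of $2^i$), so that $\core\lambda{2^i}$ is obtained by sliding every bead as far up its runner as possible, a move that preserves the number of beads on each runner \cite{olsson}. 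The $2^i$ runners refine the two runners of the $2$-abacus, runner $r$ lying over runner $r\bmod 2$; since sliding beads up on the $2^i$-abacus also slides them up on the $2$-abacus, and within each class $r\equiv\epsilon\pmod 2$ this realises the $2^{i-1}$-core operation on $\lambda_\epsilon$, the two displayed identities follow. Alternatively, both are instances of the iterated-quotient compatibility of \cite[Theorem~3.3]{olsson}.

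Granting the recursion, the induction is immediate. For the base case $i=1$ the recursion gives $\quo{(\core\lambda 2)}2 = (\emptyset,\emptyset)$, so the $2$-core tower of $\core\lambda 2$ has $\core\lambda 2$ in row $0$ and empty partitions below, which is exactly the tower of $\lambda$ with rows $1,2,\dotsc$ emptied. For the inductive step, the recursion identifies row $0$ of the tower of $\core\lambda{2^i}$ with row $0$ of the tower of $\lambda$, and identifies the two subtrees hanging below row $1$ with the $2$-core towers of $\core{\lambda_0}{2^{i-1}}$ and $\core{\lambda_1}{2^{i-1}}$. Applying the inductive hypothesis to $\lambda_0$ and $\lambda_1$ with exponent $i-1$, each of these subtrees agrees with the corresponding subtree of $\lambda$ in rows $1,\dotsc,i-1$ and is empty from row $i$ onwards; together with the agreement in row $0$, this gives the claim for $\lambda$.

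As an independent check that rows numbered $i$ and larger must vanish, I would also invoke Lemma~\ref{lemma:core-tower-entries}: since $\core\lambda{2^i}$ is a $2^i$-core it has no node whose hook length is divisible by $2^i$, hence none divisible by $2^j$ for any $j\geq i$, so the depth-$j$ quotient partitions of $\core\lambda{2^i}$ are empty for every $j\geq i$. The main obstacle is the recursion itself: everything reduces to verifying that the $2^i$-core operation factors through a single step of the $2$-quotient decomposition, and this is precisely the bookkeeping that the abacus (or Olsson's Theorem~3.3) is designed to handle.
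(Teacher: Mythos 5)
Your proof is correct and takes essentially the same route as the paper: induction on $i$ powered by the compatibility $\quo{(\core\lambda{2^i})}2 = \core{(\quo\lambda 2)}{2^{i-1}}$, which is exactly the paper's identity (\ref{eq:16}) deduced from \cite[Theorem~3.3]{olsson}. The only differences are cosmetic: you sketch a self-contained abacus verification of that identity where the paper simply cites Olsson, and you anchor the induction at $i=1$ rather than $i=0$.
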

\begin{proof}
  Since the $2$-core tower of a $2$-core is concentrated in the $0$th row, the lemma holds for $i=0$.
  The proof proceeds by induction on $i$ using the following general fact:
  for integers $p$ and $q$, we always have
  \begin{equation}
    \label{eq:16}
    \quo{(\core\lambda{pq})}p = \core{(\quo\lambda p)}q,
  \end{equation}
  where the right hand side is interpreted as the $q$-tuple of cores of the partitions in $\quo\lambda p$.
  This can be deduced from the assertion about the removal of hooks in \cite[Theorem~3.3]{olsson}.

  Let $\omega$ be the partition whose $2$-core tower is obtained from that of $\lambda$ by replacing all the partitions in rows numbered $i$ and larger by the trivial partition.
  We wish to show that $\omega = \core\lambda{2^i}$.
  Clearly, $\core \lambda 2 = \core{(\core\lambda{2^i})}2 = \core \omega 2$.
  
  The $2$-core tower $T(\lambda)$ of $\lambda$ looks like
  \begin{displaymath}
    T(\lambda) = 
    \xymatrix{
      &\core{\lambda}{2} \ar@{-}[dl] \ar@{-}[dr]&\\
      T(\lambda_0) && T(\lambda_1)
    }
  \end{displaymath}
  where $\quo\lambda 2 = (\lambda_0, \lambda_1)$, and $T(\lambda_i)$ is the $2$-core tower of $\lambda_i$.
  Similarly, the $2$-core tower of $\omega$ looks like:
  \begin{displaymath}
    T(\omega) = 
    \xymatrix{
      &\core{\lambda}{2} \ar@{-}[dl] \ar@{-}[dr]&\\
      T(\omega_0) && T(\omega_1)
    }
  \end{displaymath}
  where $\quo\omega 2 = (\omega_0, \omega_1)$.
  Since $T(\omega)$ is obtained from $T(\lambda)$ by replacing all the partitions in rows numbered $i$ and larger by the empty partition, $T(\omega_i)$ is obtained from $T(\lambda_i)$ by replacing partitions in rows numbered $i-1$ and larger by the empty partition.
  By the induction hypothesis, $\omega_i = \core{\lambda_i}{2^{i-1}}$.
  We have:
  \begin{align*}
    \quo\omega 2 & = \core{(\quo\lambda 2)}{2^{i-1}}\\
    & = \quo{(\core\lambda{2^i})}2,
  \end{align*}
  by applying (\ref{eq:16}) with $p=2$ and $q=2^{i-1}$.
  We have shown that $\omega$ and $\core\lambda{2^i}$ have the same $2$-core and $2$-quotient, from which follows that $\omega = \core\lambda{2^i}$.
\end{proof}
\begin{lemma}
  \label{lemma:valuation-of-core}
  For any partition $\lambda$, and any positive integer $i$,
  \begin{displaymath}
    v_2(f_\lambda) = v_2(f_{\core\lambda{2^i}}) + v_2(f_\mu) + \nu(n-|\core\lambda{2^i}|) + \nu(|\core\lambda{2^i}|) - \nu(n). 
  \end{displaymath}
  where $\mu$ denotes the partition whose $2$-core tower is obtained from the $2$-core tower of $\lambda$ by replacing all the partitions in rows numbered $0,\dotsc,i-1$ by the empty partition.
  As a consequence,
  \begin{displaymath}
    v_2(f_\lambda) \geq v_2(f_{\core\lambda{2^i}}) + v_2(f_\mu).
  \end{displaymath}
\end{lemma}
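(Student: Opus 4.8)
The plan is to deduce everything from Theorem~\ref{theorem:oddness}, which writes each $2$-adic valuation as the difference between a total $2$-core tower weight and a count of binary digits, together with the identification of the relevant weights supplied by Lemma~\ref{lemma:core-tower-of-core}. First I would record the weights of the three towers in play. By Lemma~\ref{lemma:core-tower-of-core}, the $2$-core tower of $\core\lambda{2^i}$ agrees with that of $\lambda$ in rows $0,\dotsc,i-1$ and is empty thereafter, so $w_j(\core\lambda{2^i}) = w_j(\lambda)$ for $j<i$ and $w_j(\core\lambda{2^i}) = 0$ for $j\geq i$. By the definition of $\mu$, its tower agrees with that of $\lambda$ in rows $i$ and larger and is empty below, so $w_j(\mu)=0$ for $j<i$ and $w_j(\mu)=w_j(\lambda)$ for $j\geq i$. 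Summing over all rows yields the additive decomposition $w(\lambda) = w(\core\lambda{2^i}) + w(\mu)$.

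Next I would pin down the sizes. Since $n = \sum_{j\geq 0} w_j(\lambda)2^j$, applying the same formula to the two truncated towers gives $|\core\lambda{2^i}| = \sum_{j<i} w_j(\lambda)2^j$ and $|\mu| = \sum_{j\geq i} w_j(\lambda)2^j$, so that $|\mu| = n - |\core\lambda{2^i}|$. Now I would apply Theorem~\ref{theorem:oddness} to each of $\lambda$, $\core\lambda{2^i}$, and $\mu$:
\[
  v_2(f_\lambda) = w(\lambda) - \nu(n), \qquad v_2(f_{\core\lambda{2^i}}) = w(\core\lambda{2^i}) - \nu(|\core\lambda{2^i}|),
\]
together with $v_2(f_\mu) = w(\mu) - \nu(|\mu|) = w(\mu) - \nu(n-|\core\lambda{2^i}|)$. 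Adding the last two identities and invoking the weight decomposition gives
\[
  v_2(f_{\core\lambda{2^i}}) + v_2(f_\mu) = w(\lambda) - \nu(|\core\lambda{2^i}|) - \nu(n-|\core\lambda{2^i}|).
\]
Substituting this into the right-hand side of the claimed identity collapses it to $w(\lambda) - \nu(n)$, which is exactly $v_2(f_\lambda)$, so the formula holds.

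For the displayed inequality it then remains only to observe that the correction term is nonnegative, i.e.\ that $\nu(|\core\lambda{2^i}|) + \nu(n-|\core\lambda{2^i}|) - \nu(n) \geq 0$. Writing $a = |\core\lambda{2^i}|$ and $b = n-a$, this is the superadditivity $\nu(a)+\nu(b)\geq \nu(a+b)$, which holds because adding $a$ and $b$ in base $2$ can only merge ones through carries; equivalently, by Legendre's formula $\nu(a)+\nu(b)-\nu(a+b) = v_2\binom{a+b}{a}\geq 0$. I expect no genuinely hard step here: the proof is essentially a substitution into Theorem~\ref{theorem:oddness}, and the only point requiring care is the bookkeeping of which rows of the $2$-core tower are carried by each of $\core\lambda{2^i}$ and $\mu$, ensuring that the weights and sizes split cleanly across row $i$.
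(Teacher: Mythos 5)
Your proposal is correct and follows essentially the same route as the paper: both apply Theorem~\ref{theorem:oddness} together with Lemma~\ref{lemma:core-tower-of-core} to split the tower weights across row $i$ (with $|\mu| = n - |\core\lambda{2^i}|$), and both derive the inequality from the subadditivity $\nu(a+b)\leq\nu(a)+\nu(b)$. Your write-up is merely a bit more explicit than the paper's in recording the weight and size bookkeeping for $\mu$, which the paper uses tacitly.
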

\begin{remark*}
  In the inequality in Lemma~\ref{lemma:valuation-of-core}, it should be understood that for the empty partition $\emptyset$ of $0$, we have $f_\emptyset = 1$.
\end{remark*}
\begin{proof}
  Let $\alpha = \core\lambda{2^i}$.
  By Theorem~\ref{theorem:oddness}, we have:
  \begin{displaymath}
    v_2(f_\lambda) = \sum_{j=0}^{i-1} w_j(\lambda) + \sum_{j\geq i} w_j(\lambda) - \nu(n),
  \end{displaymath}
  and by the additional use of Lemma~\ref{lemma:core-tower-of-core},
  \begin{displaymath}
    v_2(f_\alpha) = \sum_{j=0}^{i-1} w_j(\lambda) - \nu(|\alpha|).
  \end{displaymath}
  So we have:
  \begin{align*}
    v_2(f_\lambda) - v_2(f_\alpha) & = \sum_{j\geq i} w_i(\lambda) - (\nu(n) - \nu(|\alpha|))\\
    & = \sum_{j\geq i} w_i(\lambda) - \nu(n - |\alpha|) + [\nu(n-|\alpha|) + \nu(|\alpha|) - \nu(n)]\\
    & = v_2(f_\mu) +\nu(n-|\alpha|) + \nu(|\alpha|) - \nu(n),
  \end{align*}
  completing the proof of the identity in the lemma.
  Recall that for any integers $k$ and $l$, $\nu(k+l)\leq \nu(k) + \nu(l)$.
  In particular, $\nu(n-|\core\lambda{2^i}|) + \nu(|\core\lambda{2^i}|) - \nu(n)\geq 0$, so the inequality in the lemma follows.
\end{proof}
\begin{lemma}
  \label{lemma:reduction-to-powers-of-2}
  Let $\lambda$ be a partition of a positive integer $n$ as in (\ref{eq:1}).
  Let $\alpha = \core\lambda{2^{k_1+1}}$.
  Then $\lambda$ is chiral if and only if both the following conditions hold:
  \begin{enumerate}
  \item \label{item:1} $\alpha$ is a chiral partition of $\epsilon + 2^{k_1}$.
  \item \label{item:2} If $\mu$ is the partition whose $2$-core tower is obtained from the $2$-core tower of $\lambda$ by replacing the partitions appearing in rows numbered $0,\dotsc,k_1$ by the empty partition, then $v_2(f_\mu) = 0$.
  \end{enumerate}
\end{lemma}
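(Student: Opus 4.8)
The plan is to translate chirality into a single $2$-adic valuation identity and then match it, term by term, against the splitting of $\lambda$ into its $2^{k_1+1}$-core $\alpha$ and the ``tail'' $\mu$. By Theorem~\ref{theorem:chirality-gla}, $\lambda$ is chiral exactly when $g_\lambda$ is odd, i.e. $v_2(g_\lambda)=0$. Writing $g_\lambda = f_\lambda(\binom n2 - C(\lambda))/(2\binom n2)$ and taking valuations, this reads $v_2(f_\lambda) + v_2(\binom n2 - C(\lambda)) = 1 + v_2(\binom n2)$. From the binary expansion (\ref{eq:1}) one checks in both parities that $v_2(\binom n2) = v_2(n) + v_2(n-1) - 1 = k_1 - 1$, so the criterion becomes
\begin{equation*}
  \lambda \text{ is chiral} \iff v_2(f_\lambda) + v_2\big(\textstyle\binom n2 - C(\lambda)\big) = k_1. \tag{$\star$}
\end{equation*}
Because $g_\lambda$ is a non-negative integer, the left side of $(\star)$ is always $\geq k_1$ (whenever $g_\lambda\neq0$), and chirality is precisely the statement that it attains this minimum.

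Next I would control the content term. Write $p = 2^{k_1+1}$, so $\alpha = \core\lambda p$ and, by (\ref{eq:2}), $n - n_\alpha = pw$ with $w = |\quo\lambda p|$ the number of $p$-rim-hooks removed in passing from $\lambda$ to $\alpha$, where $n_\alpha = |\alpha|$. The key elementary observation is that the cells of a rim-hook have \emph{consecutive} contents; hence a $p$-rim-hook spans $2^{k_1+1}$ consecutive integer contents, whose sum is $2^{k_1}$ times the (odd) sum of its two extreme contents, and so is $\equiv 2^{k_1}\pmod{p}$. Since $C$ is additive over the removed cells, summing over the $w$ hooks gives $C(\lambda) - C(\alpha) \equiv 2^{k_1}w \pmod p$. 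A direct computation gives $\binom n2 - \binom{n_\alpha}2 = 2^{k_1}w(n+n_\alpha-1) \equiv 2^{k_1}w \pmod p$ as well, since $n+n_\alpha-1$ is odd. Subtracting,
\begin{equation*}
  \binom n2 - C(\lambda) \equiv \binom{n_\alpha}2 - C(\alpha) \pmod{2^{k_1+1}},
\end{equation*}
so the valuations of the two content terms agree whenever either is at most $k_1$. As $n \equiv \epsilon + 2^{k_1}$ and $pw \equiv 0$ modulo $p$, we also get $n_\alpha \equiv \epsilon + 2^{k_1}\pmod p$, whence $v_2(\binom{n_\alpha}2) = k_1 - 1$ for free.

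Now I would assemble the two directions. For the dimension term, Lemma~\ref{lemma:valuation-of-core} with $i = k_1 + 1$ gives $v_2(f_\lambda) = v_2(f_\alpha) + v_2(f_\mu) + B$, where $B = \nu(|\mu|) + \nu(n_\alpha) - \nu(n) \geq 0$. For the backward direction, assume $\alpha$ is chiral of size $\epsilon + 2^{k_1}$ and $v_2(f_\mu) = 0$; then $|\mu| = n - n_\alpha = 2^{k_2} + \dots + 2^{k_r}$ has binary support disjoint from that of $n_\alpha = \epsilon + 2^{k_1}$, so $B = 0$ and $v_2(f_\lambda) = v_2(f_\alpha)$. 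Chirality of $\alpha$ reads $v_2(f_\alpha) + v_2(\binom{n_\alpha}2 - C(\alpha)) = k_1$, and the content congruence transfers this to $(\star)$ for $\lambda$. For the forward direction, $(\star)$ together with the content congruence gives $v_2(f_\alpha) + v_2(f_\mu) + B + v_2(\binom{n_\alpha}2 - C(\alpha)) = k_1$; but $v_2(f_\alpha) + v_2(\binom{n_\alpha}2 - C(\alpha)) \geq 1 + v_2(\binom{n_\alpha}2) = k_1$ by non-negativity of $v_2(g_\alpha)$, forcing $v_2(f_\mu) = B = 0$ and $v_2(f_\alpha) + v_2(\binom{n_\alpha}2 - C(\alpha)) = k_1$, i.e. $\alpha$ is chiral.

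The one point this bookkeeping does not yet deliver is the \emph{exact} size $n_\alpha = \epsilon + 2^{k_1}$: the valuations only pin $n_\alpha \equiv \epsilon + 2^{k_1} \pmod{2^{k_1+1}}$, which is consistent with $n_\alpha = \epsilon + 2^{k_1} + 2^{k_1+1}t$ for any $t \geq 0$. I expect this to be the main obstacle: one must show that a chiral $2^{k_1+1}$-core has size strictly below $2^{k_1+1}$, equivalently that $v_2(f_\alpha) + v_2(\binom{n_\alpha}2 - C(\alpha)) > k_1$ once $n_\alpha \geq 2^{k_1+1}$. The mod-$2^{k_1+1}$ content computation is too coarse for this, being insensitive to $t$, so the size must be pinned by a finer analysis of $C(\alpha)$ for a $2^{k_1+1}$-core — the sharper computation underlying the companion Lemma~\ref{lemma:powers-of-two} — or by a descent reducing $\alpha$ to its own bottom rows. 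Finally, the degenerate case in which $\alpha$ is a single row (so $g_\alpha = 0$ and $\binom{n_\alpha}2 - C(\alpha) = 0$) forces $v_2(\binom n2 - C(\lambda)) \geq k_1 + 1$ and hence non-chirality of $\lambda$, in agreement with the claim, and should be disposed of separately.
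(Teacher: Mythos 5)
Your proposal tracks the paper's own proof almost step for step: the same reduction of chirality to the valuation identity $v_2(f_\lambda)+v_2\bigl(\binom n2 - C(\lambda)\bigr)=k_1$, the same rim-hook content congruence between $\lambda$ and $\alpha$, and the same splitting $v_2(f_\lambda)=v_2(f_\alpha)+v_2(f_\mu)+B$ from Lemma~\ref{lemma:valuation-of-core}, with both directions finished by the same forcing argument. On two points you are in fact more careful than the text: you establish the congruence $\binom n2 - C(\lambda)\equiv\binom{|\alpha|}2-C(\alpha)\pmod{2^{k_1+1}}$, whereas the paper records (\ref{eq:10}) only modulo $2^{k_1}$, and the finer modulus is what is genuinely needed to deduce (\ref{eq:12}) in the boundary case $v_2\bigl(\binom n2 - C(\lambda)\bigr)=k_1$; and you treat the degenerate possibility $g_\alpha=0$ explicitly, which the paper leaves implicit.

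The gap you flag at the end, however, is genuine, and it is part of the statement being proved: item (\ref{item:1}) asserts $|\alpha|=\epsilon+2^{k_1}$ exactly, while your bookkeeping only yields that $\alpha$ is chiral of some size congruent to $\epsilon+2^{k_1}$ modulo $2^{k_1+1}$. But your diagnosis of what is needed overshoots: the paper closes this without any finer computation of $C(\alpha)$ and without folding the question into Lemma~\ref{lemma:powers-of-two}. The missing step is binary-digit bookkeeping, half of which you already proved: forcing $B=\nu(n-|\alpha|)+\nu(|\alpha|)-\nu(n)=0$ in your forward direction says precisely that the addition $|\alpha|+|\mu|=n$ is carry-free, i.e.\ $\bin(|\alpha|)\subseteq\bin(n)$. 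Combined with your congruence, the digits of $|\alpha|$ below place $k_1+1$ are exactly $k_1$, together with $0$ when $\epsilon=1$, and any further digit would have to be some $k_j$ with $j\geq 2$. The paper rules these out by appealing to Lemma~\ref{lemma:core-tower-of-core}: the $2$-core tower of $\alpha=\core\lambda{2^{k_1+1}}$ is supported in rows $0,\dotsc,k_1$, and the entire $2$-deviation $v_2(f_\alpha)=v_2(f_\lambda)$ is accounted for in those rows, so the binary expansion of $|\alpha|$ has no digits beyond place $k_1$; with the carry-free condition and the congruence this pins $|\alpha|=\epsilon+2^{k_1}$. (The paper states this digit claim quite briskly, so your instinct that this is where the real content lies is sound; but the intended resolution is arithmetic bookkeeping with the data you had already forced — $B=0$ plus the support of the core's tower — not a descent or a sharper content analysis.)
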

\begin{proof}
  Suppose $\lambda$ is chiral.
  Recall that the $2$-adic valuation of a binomial coefficient $\binom nk$ is $\nu(k) + \nu(n-k) - \nu(n)$.
  It follows that $v_2\binom n2 = k_1-1$ if $n$ is of the form (\ref{eq:1}).
  Moreover, since $\alpha$ is obtained from $\lambda$ by removing $2^{k_1+1}$-rim hooks, $|\alpha| \equiv n\mod 2^{k_1+1}$.
  So,
  \begin{equation}
    \label{eq:11}
    v_2\binom n2 = v_2\binom{|\alpha|}2 = k_1-1.
  \end{equation}
  
  As $(i,j)$ runs over a rim-hook $r$ of size $2^{k_1+1}$, $(j-i)$ visits each residue class modulo $2^{k_1+1}$ exactly once.
  So
  \begin{displaymath}
    \sum_{(i,j)\in r} (j-i) \equiv 2^{k_1}(2^{k_1+1}-1) \equiv 0 \mod 2^{k_1}.
  \end{displaymath}
  Since $\alpha$ is obtained from $\lambda$ by removing a sequence of such hooks, we have:
  \begin{equation}
    \label{eq:10}
    C(\lambda)\equiv C(\alpha) \mod 2^{k_1}.
  \end{equation}
  Since $v_2(g_\lambda) =0$, (\ref{eq:5}) gives
  \begin{displaymath}
    v_2\left(\binom n2 - C(\lambda)\right) = k_1 - v_2(f_\lambda) \leq k_1.
  \end{displaymath}
  Together with (\ref{eq:10}), we get that
  \begin{equation}
    \label{eq:12}
    v_2\left(\binom n2 - C(\lambda)\right) = v_2\left(\binom{|\alpha|}2 - C(\alpha)\right).
  \end{equation}
  Since $v_2(f_\alpha)\leq v_2(f_\lambda)$ (by Lemma~\ref{lemma:valuation-of-core}), it follows from (\ref{eq:5}) that $v_2(g_\alpha)\leq v_2(g_\lambda)$.
  Since $v_2(g_\lambda) = 0$, it must be that $v_2(g_\alpha)=0$, i.e., $\alpha$ is also chiral.

  Another consequence of the above argument is that $v_2(f_\alpha) = v_2(f_\lambda)$.
  which (by Lemma~\ref{lemma:valuation-of-core}) means that $\nu(n) = \nu(|\alpha|) + \nu(n-|\alpha|)$, and $v_2(f_\mu)=0$.
  In particular, the $2$-deviation of $\alpha$ comes from the first $k_1$-rows of its $2$-core tower, so the binary expansion of $|\alpha|$ cannot have digits beyond the $k_1$th place value.
  Since $\nu(n) = \nu(|\alpha|) + \nu(n-|\alpha|)$, we must have $|\alpha| = 2^{k_1}+\epsilon$.

  For the converse, note that the conditions (\ref{item:1}) and (\ref{item:2}) imply that $e_2(\mu)=0$ and hence $e_2(\lambda) = e_2(\alpha)$, so $v_2(f_\alpha) = v_2(f_\lambda)$.
  Also, $v_2\binom n2 = v_2\binom{2^{k_1}+\epsilon}2 = k_1-1$.
  So in order to show that $\lambda$ is chiral, it suffices to show that
  \begin{equation}
    \label{eq:ineq-val}
    v_2\left(\binom n2 - C(\lambda)\right) \leq v_2\left(\binom{2^{k_1} + \epsilon}2 - C(\alpha)\right).
  \end{equation}
  Since $\alpha$ is chiral and $v_2(g_\alpha) =0$.
  Now (\ref{eq:5}) implies that $v_2\left(\binom{2^{k_1} + \epsilon}2 - C(\alpha)\right)\leq k_1$.
  The identity (\ref{eq:10}) still holds true, so we must have equality in (\ref{eq:ineq-val}).
\end{proof}  
\begin{lemma}
  \label{lemma:powers-of-two}
  Let $\lambda$ be a partition of $n = \epsilon + 2^k$, where $\epsilon\in\{0,1\}$, and $k\geq 1$.
  Then $\lambda$ is chiral if and only if one of the following conditions holds:
  \begin{enumerate}
  \item \label{item:3} The partition $\lambda$ satisfies
    \begin{displaymath}
      w_i(\lambda) =
      \begin{cases}
        1 &\text{if } i=k, \text{ or if } \epsilon = 1\text{ and } i =0,\\
        0 &\text{otherwise,}
      \end{cases}
    \end{displaymath}
    and the unique non-empty partition in the $k$th row of the $2$-core tower of $\lambda$ is $\alpha_x$, where the binary sequence $x$ of length $k$ begins with $\epsilon$.
    In this case $f_\lambda$ is odd.
  \item \label{item:4} For some $0<v<k$,
    \begin{displaymath}
      w_i(\lambda) =
      \begin{cases}
        2 & \text{if } i = k-v,\\
        1 & \text{if } k-v+1\leq i\leq k-1,\text{ or if } \epsilon = 1 \text{ and } i = 0,\\
        0 & \text{otherwise,}
      \end{cases}
    \end{displaymath}
    and the two non-empty partitions in the $(k-v)$th row of the $2$-core tower of $\lambda$ are $\alpha_x$ and $\alpha_y$, for binary sequences $x$ and $y$ such that $x$ begins with $0$ and $y$ begins with $1$.
    In this case $v_2(f_\lambda) = v$.
  \item \label{item:5} We have $\epsilon = 1$ and the partition $\lambda$ satisfies
    \begin{displaymath}
      w_i(\lambda) =
      \begin{cases}
        3 & \text{if } i=0,\\
        1 & \text{if } 1\leq i\leq k-1.
      \end{cases}
    \end{displaymath}
    In this case, $v_2(f_\lambda) = k$.
  \end{enumerate}
\end{lemma}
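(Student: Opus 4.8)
The plan is to translate chirality into a $2$-adic valuation condition and then read that condition off the $2$-core tower via a recursion for the content sum $C(\lambda)$. By Theorem~\ref{theorem:chirality-gla}, $\lambda$ is chiral iff $v_2(g_\lambda)=0$, and taking $2$-adic valuations in (\ref{eq:5}) gives $v_2(g_\lambda) = v_2(f_\lambda) + v_2\big(\binom n2 - C(\lambda)\big) - 1 - v_2\binom n2$. Since the valuation of $\binom n2$ equals $\nu(n-2)+1-\nu(n) = k-1$ for $n=\epsilon+2^k$, chirality is equivalent to
\[ v_2\Big(\tbinom n2 - C(\lambda)\Big) = k - v_2(f_\lambda). \]
By Theorem~\ref{theorem:oddness}, $v_2(f_\lambda) = w(\lambda)-\nu(n) = w(\lambda)-(1+\epsilon)$, so each of the three weight profiles (\ref{item:3})--(\ref{item:5}) is automatically consistent with $\sum_i w_i(\lambda)2^i = n$ and yields exactly the asserted value of $v_2(f_\lambda)$. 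The genuine content of the lemma is therefore the conditions on the leading bits of the positions $x$ (and $x,y$), together with the claim that no other profile is chiral.

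First I would establish a recursion for $C(\lambda)$ under the $2$-quotient map. Removing one domino deletes two cells of consecutive contents $c_0,c_0+1$, hence decreases $C$ by the odd number $2c_0+1$; and under $\quo\lambda2=(\lambda_0,\lambda_1)$ the minimal content $c_0$ of the domino attached to a given cell of $\lambda_\delta$ is an affine function, of slope $2$, of that cell's content in $\lambda_\delta$, with intercept determined by $\core\lambda2$ and $\delta$. Summing over all dominoes yields
\[ C(\lambda) = C(\core\lambda2) + 4\big(C(\lambda_0)+C(\lambda_1)\big) + a_0|\lambda_0| + a_1|\lambda_1|, \]
where $a_0,a_1$ are odd and depend only on $\core\lambda2$ (for $\core\lambda2=\emptyset$ one gets $a_0=-1,a_1=+1$). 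The only cores occurring here are $\emptyset$, $(1)$ and $(2,1)$, all self-conjugate, so $C(\core\lambda2)=0$ throughout. Unrolling this recursion writes $C(\lambda)$ as a sum of contributions indexed by the cells of the $2$-core tower, in which a cell at level $i$ contributes a term of $2$-adic valuation exactly $i-1$; moreover, by Lemma~\ref{lemma:core-tower-entries} the parity of the relevant minimal content, and hence the residue of that contribution modulo $2^{i+1}$, is governed by the leading bit of the cell's position. One clean consequence I would isolate is $C(\lambda)\equiv w_1(\lambda)\pmod 2$.

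With the recursion in hand I would argue by induction on $k$, the base case $k=1$ ($n\in\{2,3\}$) being a direct check, and I would dispatch case (\ref{item:5}) separately: there $k-v_2(f_\lambda)=0$, so chirality is equivalent to $C(\lambda)$ being odd while $\binom n2$ is even, which holds precisely because $w_1(\lambda)=1$; no position condition is then needed. For cases (\ref{item:3}) and (\ref{item:4}), passing to $\quo\lambda2=(\lambda_0,\lambda_1)$ makes the left and right subtrees of the tower of $\lambda$ into the towers of $\lambda_0,\lambda_1$ (of total size $2^{k-1}$ when $\core\lambda2\in\{\emptyset,(1)\}$). The recursion reduces $v_2\big(\binom n2 - C(\lambda)\big)$ to $C(\lambda_0),C(\lambda_1),|\lambda_0|,|\lambda_1|$, and each such profile for $\lambda$ descends to a profile of the same shape with $k$ replaced by $k-1$ on the subtree carrying the critical cells; tracking which subtree this is amounts exactly to tracking the leading bit, which is how the conditions ``begins with $\epsilon$'' in (\ref{item:3}) and ``begins with $0$/$1$'' in (\ref{item:4}) propagate.

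The crux is the $2$-adic bookkeeping at the critical bit, namely position $k-v_2(f_\lambda)$ of $\binom n2 - C(\lambda)$, which must equal $1$ exactly in the listed configurations. The delicate points are: in case (\ref{item:4}) the two lowest cells contribute terms of equal valuation $k-v-1$ whose leading parts reinforce, rather than cancel, at the critical bit precisely when the two positions begin with opposite bits, after which the bit inherited from $\binom n2$ (of valuation $k-1$) survives; in case (\ref{item:3}) the single-cell condition ``begins with $\epsilon$'' is exactly what makes the bit at position $k$ nonzero. The most laborious part, and the one I expect to be the main obstacle, is exhaustiveness: showing that every weight profile other than (\ref{item:3})--(\ref{item:5}) forces $v_2\big(\binom n2 - C(\lambda)\big) > k-v_2(f_\lambda)$, so that $g_\lambda$ is even. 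Controlling the interaction between the fixed valuation $k-1$ of $\binom n2$ and the lowest tower contributions across the three regimes $v=1$, $1<v<k$ and $v=k$ is what makes this step subtle.
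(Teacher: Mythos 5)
Your reduction of chirality to $v_2\bigl(\binom n2 - C(\lambda)\bigr) = k - v_2(f_\lambda)$ is exactly the paper's starting point (its equation (\ref{eq:3})), and your quotient recursion for $C(\lambda)$ is correct as stated: one can check $C(\lambda) = C(\core\lambda2) + 4\bigl(C(\lambda_0)+C(\lambda_1)\bigr) + a_0|\lambda_0| + a_1|\lambda_1|$ with odd constants on small cases, and your unrolled claim --- a level-$i$ cell contributes a term of valuation exactly $i-1$ whose residue mod $2^{i+1}$ depends only on the leading bit of its position --- is sound (the dependence on deeper bits cancels because $2a'\equiv 2 \bmod 4$ for any odd $a'$). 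The paper obtains the same $2$-adic information without any recursion or induction, simply by summing contents along rim hooks of size $2^i$: a $2^i$-hook with hand content $c$ has $C(r) = 2^i c - \binom{2^i}2$, and Lemma~\ref{lemma:core-tower-entries} converts the parity of $c$ into the leading bit. So your framework would reproduce the sufficiency computations, but two things keep the proposal from being a proof.

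First, the exhaustiveness you explicitly defer as ``the main obstacle'' is not a finishing step but the bulk of the lemma, and it has a short direct argument that your plan never supplies: for $1<v<k$, if $j$ is the least positive row with $w_j(\lambda)>0$, then Theorem~\ref{theorem:oddness} together with $\sum_i w_i(\lambda)2^i = n$ forces $j\geq k-v$ and $w_j(\lambda)$ even; since each $2^i$-hook with $i>j$ has $C(r)\equiv 0 \bmod 2^j$ and any pair of $2^j$-hooks contributes $2^j(c_1+c_2)+2^j(2^j-1)\equiv 0 \bmod 2^j$, one gets $C(\lambda)\equiv C(\core\lambda2)=0 \bmod 2^j$, so $j>k-v$ would force $v_2\bigl(\binom n2 - C(\lambda)\bigr)>k-v$; this is what pins down the three profiles (with $v=k$ forcing $\epsilon=1$ because no partition of $2$ is a $2$-core, and $v>k$ being impossible since the denominator of (\ref{eq:5}) has valuation $k$). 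Second, your induction on $k$ cannot run on chirality itself: in case (\ref{item:3}) with $\epsilon=0$, the subtree carrying the unique cell is a partition of $2^{k-1}$, and such a partition is chiral only if its \emph{own} leading bit is $0$, whereas $\lambda$ is chiral for either value of the second bit of $x$ --- so the profile does not ``descend to a profile of the same shape,'' because $\binom n2$ and the target valuation do not transform compatibly under the quotient map. To make the induction close you must strengthen the hypothesis to a statement about $C(\lambda)$ modulo $2^{j+1}$ in terms of tower data; your unrolled residue claim is precisely such a statement, but once it is proved the induction is redundant and you have essentially reconstructed the paper's direct hook-content computation.
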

\begin{proof}
  Let $v = v_2(f_\lambda)$.
  Since $v_2\binom n2 = k-1$, Theorem~\ref{theorem:chirality-gla} implies that $\lambda$ is chiral if and only if
  \begin{equation}
    \label{eq:3}
    v_2\left(\binom n2 - C(\lambda)\right) = k - v.
  \end{equation}
  Consider first the case where $v = 0$.
  Then, by Theorem~\ref{theorem:oddness}, $\lambda$ has a unique $2^k$-hook.
  Suppose that the content (column number minus row number) of the foot node of this hook is $c+\epsilon$ for some integer $c$.
  Then the nodes in the rim $r$ of this hook have contents $c+\epsilon, c+\epsilon + 1,\dotsc, c+n-1$.
  Consequently, if we write $C(r)$ for the sum of contents of the nodes in $r$, then
  \begin{displaymath}
    C(r) = 2^k c + \binom n2.
  \end{displaymath}
  Furthermore $\lambda$ has at most one node not in $r$, whose content is $0$.
  So $C(\lambda) = C(r)$ and we have
  \begin{displaymath}
    \binom n2 - C(\lambda) = -2^k c,
  \end{displaymath}
  which has valuation $k$ if and only if $c$ is odd.
  Since $r$ is a hook of even length, its hand and foot nodes always have contents of different parity.
  So $\lambda$ is chiral if and only if the content of the hand node of $r$ has the same parity as $\epsilon$.
  By Lemma~\ref{lemma:core-tower-entries} this is equivalent to saying that the unique non-trivial partition in the $k$th row of the $2$-core tower of $\lambda$ is $\lambda_x$, where $x$ is a binary sequence beginning with $\epsilon$.

  Now consider the case where $v = 1$ and $k\geq 2$.
  Then, by Theorem~\ref{theorem:oddness},
  \begin{displaymath}
    w_i(\lambda) =
    \begin{cases}
      2 &\text{if } i = k-1,\\
      1 &\text{if } \epsilon = 1 \text{ and } i = 0,\\
      0 &\text{otherwise.}
    \end{cases}
  \end{displaymath}
  Thus the $2$-core of $\lambda$, which is a partition of size $\epsilon$ is obtained by removing two rim-hooks $r_1$ and $r_2$ of size $2^{k-1}$ from $\lambda$.
  As a consequence, $C(\lambda) = C(r_1) + C(r_2)$.
  Suppose that the hand-nodes of these hooks have contents $c_1$ and $c_2$ respectively.
  Then
  \begin{displaymath}
    C(r_1) + C(r_2) = 2^{k-1}(c_1 + c_2) - 2^{k-1}(2^{k-1}-1).
  \end{displaymath}
  We have
  \begin{align*}
    \binom n2 - C(\lambda) & = 2^{k-1}(2^k - 1 + 2\epsilon) - (C(r_1) + C(r_2))\\
    & = 2^k(2^{k-1} + 2^{k-2} - 1 + \epsilon) - 2^{k-1}(c_1 + c_2).
  \end{align*}
  This has valuation $k-1$ if and only if $c_1 + c_2$ is odd, in other words, the two non-trivial entries in the $2$-core tower of $\lambda$ are in the $(k-1)$st row and are of the form $\lambda_x$ and $\lambda_y$ where $x$ and $y$ are binary sequences such that $x$ begins with $0$, which $y$ begins with $1$.

  Now consider the case where $1 < v < k$.
  Theorem~\ref{theorem:oddness} implies that $w_i(\lambda) = 0$ for all $i<k-v$, except if $\epsilon = 1$, in which case $w_0(\lambda)=1$.
  Let $j$ be the smallest positive integer for which $w_j(\lambda)>0$.
  Then $j\geq k-v$, and since $\sum_{i\geq j} 2^i w_i(\lambda) = 2^k$, $w_j(\lambda)$ must be even.
  For each $i\geq j$, $\core\lambda{2^i}$ is obtained from $\core\lambda{2^{i+1}}$ by the removal of $w_i(\lambda)$ $2^i$-rim-hooks, and $\core\lambda{2^j}$ is a partition of $\epsilon$.
  Now, if $r$ is a rim-hook of size $2^i$ for some $i> j$ with hand-node content $c$, then since its nodes have contents $c, c-1, \dotsc, c-(2^i-1)$,
  \begin{displaymath}
    C(r) = 2^ic - \binom{2^i}2 \equiv 0 \mod 2^j,
  \end{displaymath}
  so $C(\lambda)\equiv C(\core\lambda{2^i}) \mod 2^j$ for all $i>j$.
  If $r_1$ and $r_2$ are two rim-hooks of size $2^j$, with hand-node contents $c_1$ and $c_2$, then
  \begin{displaymath}
    C(r_1) + C(r_2) = 2^j(c_1 + c_2) + 2^j(2^j-1)\equiv 0 \mod 2^j,
  \end{displaymath}
  So $C(\lambda)\equiv C(\core\lambda{2^j}) \mod 2^j$.
  But $\core\lambda{2^j} = \core\lambda 2$ is a partition of $\epsilon$, so $C(\core\lambda{2^j}) = 0$.
  Thus, if $j>k-v$, then
  \begin{displaymath}
    v_2\left(\binom n2 - C(\lambda)\right) > k-v.
  \end{displaymath}
  It follows that if $\lambda$ is chiral, then $j = k-v$, whence the conditions that $w_{k-v}(\lambda) = 2$, and $w_i(\lambda)=1$ for $k-v<i<k$ are forced, and (\ref{eq:3}) holds if and only if the hand nodes of the two $2^{k-v}$-hooks have contents of opposite parity.

  Now consider the case where $v = k$.
  In this case $\lambda$ is chiral if and only if $C(\lambda)$ is odd.
  If $r$ is a rim-hook of size $2^i$, then $C(r)$ is even unless $i=1$.
  So $C(\lambda)\equiv C(\core\lambda 4) \mod 2$.
  If $r$ is a rim-hook of size $2$, then $C(r)$ is odd.
  Also, $C(\core\lambda 2) = 0$.
  It follows that $\lambda$ is chiral if and only if $w_1(\lambda)$ is odd.
  Theorem~\ref{theorem:oddness} now implies that $w_i(\lambda) =1$ for all $1\leq i<k$, and that the $2$-core of $\lambda$ is a partition of $2+\epsilon$.
  Since no partition of $2$ is a $2$-core, this case can occur only when $\epsilon = 1$, giving the case (\ref{item:5}).

  This leaves us with the case where $v>k$.
  Since the denominator of the right hand side of (\ref{eq:5}) has valuation $k$, $g_\lambda$ can never be odd in this case.
\end{proof}
Reading Lemma~\ref{lemma:reduction-to-powers-of-2} together with Lemma~\ref{lemma:powers-of-two} now gives Theorem~\ref{theorem:chiral-towers}.
\section{The Growth of $b(n)$ and Sage code}
\label{sec:growth-bn}
Recall that, if $n$ is a positive integer with binary expansion (\ref{eq:1}),
then the number of partitions of $n$ such that $f_\lambda$ is odd (which we will call \emph{odd partitions}) is given by
\begin{equation}
  \label{eq:17}
  a(n) = 2^{k_1+\dotsb + k_r}.
\end{equation}
This result was proved by Macdonald \cite{macdonald1971degrees}.
It has a nice interpretation in terms of Theorem~\ref{theorem:oddness}:
If $n$ has binary expansion (\ref{eq:1}) and $\lambda$ is a partition of $n$ such that $f_\lambda$ is odd, then the rows $k_1,\dotsc,k_r$ of the $2$-core tower of $\lambda$ have one non-empty entry each, namely the partition $(1)$.
There are $2^{k_i}$ choices for the location of the entry in the $k_i$th row.
The remaining rows, with the exception of the $0$th row when $\epsilon =1$, have all entries $\emptyset$, and (\ref{eq:17}) is obtained.
The subgraph of Young's graph consisting of odd partitions is discussed in \cite{2016arXiv160101776A}.

It turns out that the functions $a(n)$ and $b(n)$ track each other closely (see Figure~\ref{fig:growth}).
\begin{figure}[h]
  \centering
  \includegraphics[scale=0.6]{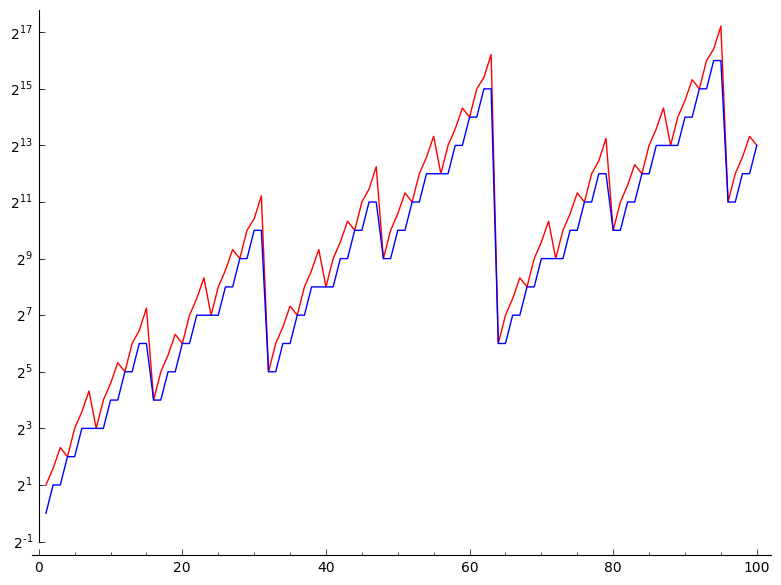}
  \caption{Growth of \textcolor{blue}{$a(n)$} (odd partitions) against \textcolor{red}{$b(n+2)$} (chiral partitions)  on a logarithmic scale}
  \label{fig:growth}
\end{figure}
\begin{theorem}
  \label{theorem-an-bn+2-comparison}
  For every positive integer $n$,
  \begin{displaymath}
    2/5\leq a(n)/b(n+2) \leq 1.
  \end{displaymath}
  Moreover, $a(n)/b(n+2) = 1$ if and only if $n$ is divisible by $4$.
\end{theorem}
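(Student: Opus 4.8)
The plan is to compute everything from the two explicit product formulas: Macdonald's $a(n) = 2^{k_1+\dots+k_r}$ and the formula of Theorem~\ref{theorem:all}. Write
\begin{equation*}
  P(m,\epsilon) = 2^{m-1} + \sum_{v=1}^{m-1}2^{(v+1)(m-2)-\binom v2} + \epsilon\,2^{\binom m2}
\end{equation*}
for the parenthesised factor in Theorem~\ref{theorem:all}, so that $b(n) = 2^{k_2+\dots+k_r}P(k_1,\epsilon)$ and $a(n) = 2^{k_1}\cdot 2^{k_2+\dots+k_r}$. The point is that the factor $2^{k_2+\dots+k_r}$, coming from the bits above the lowest positive bit, is common to both; only the lowest positive bit and the parity enter nontrivially.

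First I would analyse how the binary expansion changes on passing from $n$ to $n+2$. Let $\epsilon$ be the parity of $n$ and let $t\ge 0$ be the length of the maximal run of $1$'s occupying positions $1,2,\dots,t$ (so bit $t+1$ is $0$; $t=0$ means bit $1$ is already $0$). Adding $2=2^1$ propagates a carry through exactly these $t$ positions, clearing them and setting bit $t+1$, while leaving bit $0$ and every bit above position $t+1$ untouched. Hence, with $m=t+1$, the lowest positive bit of $n+2$ sits at position $m$, the parity is unchanged, and the bits strictly above $m$ coincide for $n$ and $n+2$. Feeding this into the two product formulas, the common high-order factor cancels and one is left with
\begin{equation*}
  \frac{a(n)}{b(n+2)} = \frac{2^{\binom m2}}{P(m,\epsilon)},
\end{equation*}
an expression depending only on $m\ge 1$ and $\epsilon\in\{0,1\}$; note that $4\mid n$ is precisely the case $(m,\epsilon)=(1,0)$. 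This carry bookkeeping (together with checking the degenerate small cases, e.g.\ $H=\emptyset$ or $r=0$) is the most tedious part, but it is wholly elementary.

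The crux is to bound $Q(m,\epsilon):=P(m,\epsilon)/2^{\binom m2}$ and prove $1\le Q(m,\epsilon)\le 5/2$ with $Q=1$ only at $(m,\epsilon)=(1,0)$. The key computation is the identity $(v+1)(m-2)-\binom v2 = \binom m2 - 1 - \binom{m-1-v}2$, verified by expanding both sides; re-indexing the sum by $j=m-1-v$ then gives $Q(m,\epsilon)=f(m)+\epsilon$, where
\begin{equation*}
  f(m) = 2^{-\binom{m-1}2} + \tfrac12\sum_{j=0}^{m-2}2^{-\binom j2}.
\end{equation*}
A short calculation yields $f(m+1)-f(m) = 2^{-\binom{m-1}2}\bigl(2^{1-m}-\tfrac12\bigr)$, positive for $m=1$, zero for $m=2$, and negative for $m\ge 3$. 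Thus $f(1)<f(2)=f(3)>f(4)>\cdots$, so $f(m)\le f(2)=\tfrac32$ for all $m$; and $f(1)=1$, while $f(2)=\tfrac32>1$ and for $m\ge 3$ the sum already contains the $j=0,1$ terms, forcing $f(m)>\tfrac12(2^{-\binom 02}+2^{-\binom 12})=1$. Hence $f(m)\ge 1$ with equality only at $m=1$. Since $\epsilon\in\{0,1\}$, this gives $1\le Q\le 5/2$, i.e.\ $2/5\le a(n)/b(n+2)\le 1$, with the right-hand equality exactly when $f(m)=1$ and $\epsilon=0$, that is when $4\mid n$.

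The main obstacle is isolating the sharp constant $2/5$: the naive estimate $f(m)\le 1+\tfrac12\sum_{j\ge 0}2^{-\binom j2}$ overshoots $5/2$, so one genuinely needs the monotonicity of $f$ to locate its maximum at $m\in\{2,3\}$. Concretely, the extremal ratio $2/5$ is attained precisely on the family $n\equiv 3\pmod 8$ (where $m=2$, $\epsilon=1$), which is the sort of check I would run to confirm the constant before writing the details.
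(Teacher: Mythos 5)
Your argument is correct, but it takes a genuinely different route from the paper's. The reduction to the lowest positive bit is common to both: you do it by tracking the carry from $n$ to $n+2$, while the paper cancels the common factor $2^{k_2+\dotsb+k_r}$ between Theorem~\ref{theorem:all} and Macdonald's formula (\ref{eq:17}); these are the same normalization. After that the methods diverge. The paper argues combinatorially: the lower bound $2b(2^k+\epsilon)\leq 5a(2^k+\epsilon-2)$ comes from a $2$-to-$1$ map $\phi$ on $2$-core-tower configurations (yielding $2\sum_{v\geq 1}b_v(2^k)\leq a(2^k-2)$, combined with $b_0(2^k)=2^{k-1}\leq 2^{\binom k2}$ and $b(2^k+1)=b(2^k)+a(2^k-1)$), and the upper bound comes from the identity $b_{k-1}(2^k+\epsilon)+b_{k-2}(2^k+\epsilon)=a(2^k+\epsilon-2)$ for $k\geq 3$ plus direct checks for $k<3$. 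You instead work entirely from the closed formula: your exponent identity $(v+1)(m-2)-\binom v2=\binom m2-1-\binom{m-1-v}2$ checks out (the paper only meets its two instances $v=k-1,k-2$ in (\ref{eq:18}), where the normalized exponents become $-1$), and it converts the ratio into the explicit function $f(m)+\epsilon$ with $f(m)=2^{-\binom{m-1}2}+\frac12\sum_{j=0}^{m-2}2^{-\binom j2}$; your difference computation $f(m+1)-f(m)=2^{-\binom{m-1}2}\bigl(2^{1-m}-\frac12\bigr)$ and the resulting unimodality are also correct. This buys you more than the paper states: the exact value of $a(n)/b(n+2)$ as a function of the low-order data $(m,\epsilon)$, hence both sharp constants together with their exact equality sets; in particular you get a transparent proof of the ``Moreover'' clause, which the paper's written proof leaves essentially implicit (buried in the small-$k$ checks and the strictness of its inequalities). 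What you give up is the structural content of the paper's map $\phi$, which explains the inequality at the level of core towers rather than by calculation.

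One correction to your closing aside: since $f(2)=f(3)=3/2$, the value $2/5$ is attained not only for $(m,\epsilon)=(2,1)$, i.e.\ $n\equiv 3\pmod 8$, but also for $(m,\epsilon)=(3,1)$, i.e.\ $n\equiv 7\pmod{16}$ --- for instance $a(7)=8$ and $b(9)=20$. Your word ``precisely'' contradicts your own observation that $f$ attains its maximum on $m\in\{2,3\}$. This does not affect the theorem, whose equality claim concerns only the upper endpoint, but the sanity check as stated is wrong.
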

\begin{proof}
  Suppose that $n+2$ has the form (\ref{eq:1}), then the formula for $b(n)$ in Theorem~\ref{theorem:all} implies that
  \begin{displaymath}
    \frac{a(n)}{b(n+2)} = \frac{a(2^{k_1} + \epsilon)}{b(2^{k_1} + \epsilon + 2)}.
  \end{displaymath}
  Therefore it suffices to prove the inequalities for $n+2 = 2^k+\epsilon$ for $k\geq 1$ and $\epsilon\in \{0,1\}$.

  We will first prove the lower bound on $a(n)/b(n+2)$, which can be rewritten as:
  \begin{equation}
    \label{eq:7}
    2b(n+2)\leq 5a(n)
  \end{equation}

  We will first show that
  \begin{equation}
    \label{eq:8}
    2b(2^k) \leq 3a(2^k-2).
  \end{equation}
  The left hand side comes from counting, as $v$ goes from $0$ to $k-1$, the possible $2$-core towers that give rise to partitions $\lambda$ of $2^k$ with $v_2(f_\lambda) = v$, and $v_2(g_\lambda) = 0$.
  For $0<v\leq k-1$, this entails the choice of a sequence of entries in rows $v+1,\dots,k-1$, and the choice of two entries in the $v$th row, one in each half.
  Given such a configuration $C$, define $\phi(C)$ to be the configuration obtained by replacing the right entry of the $(k-v)$th row of $C$ by the entries in rows $1,\dotsc,k-v-1$ which lie on the geodesic joining this entry to the apex of the tower.

  For example, taking $k=5$ and $v=2$, Figure~\ref{fig:c} shows a possible configuration of non-trivial entries (represented by colored circles) in the $2$-core tower of a chiral partition $\lambda$ of $32$ with $v(f_\lambda) = 2$.
  \begin{figure}[h]
    \centering
    \includegraphics[scale = 0.5]{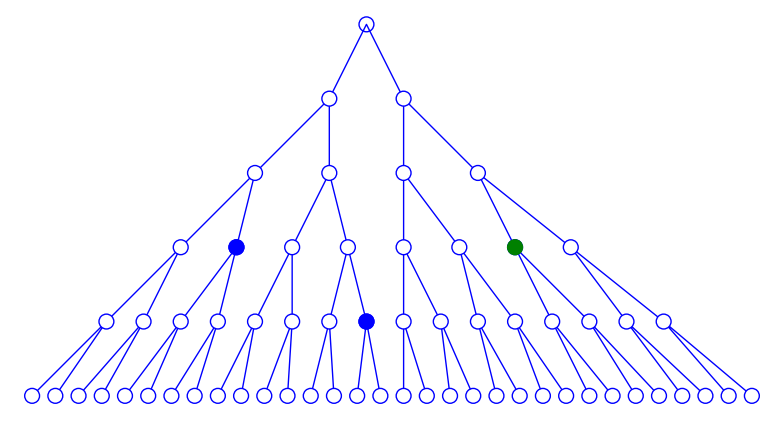}
    \caption{The configuration $C$}
    \label{fig:c}
  \end{figure}
  The configuration $\phi(C)$ is obtained by replacing the green entry in this figure by the two green entries in Figure~\ref{fig:phic}.
  \begin{figure}[h]
    \centering
    \includegraphics[scale = 0.5]{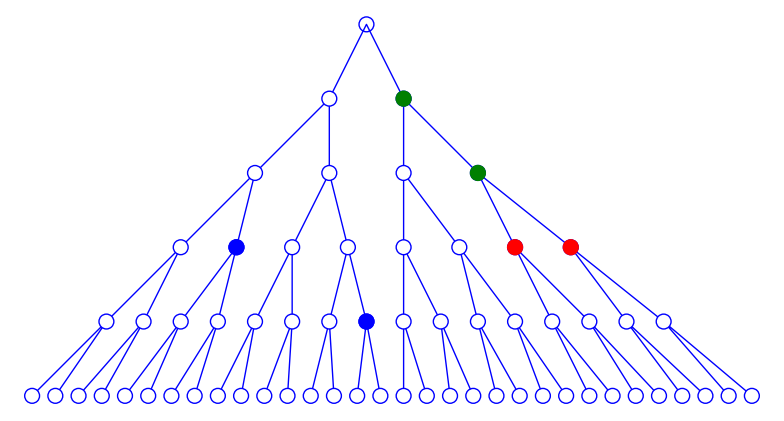}
    \caption{The configuration $\phi(C)$}
    \label{fig:phic}
  \end{figure}
  The configuration $\phi(C)$ comes from exactly two possible configurations $C$.
  The possible positions of the right side entry in the $(k-v)$th row of these configurations are indicated in red.
  In general, they are the left child and right child of the entry in the row just above the first row with an entry in the left half of $\phi(C)$.
  Thus $C\mapsto \phi(C)$ is a $2$-to-$1$ map from a set which counts $\sum_{v=1}^{k-1}b_v(2^k)$ to a set which counts $a(2^k-2)$.
  Thus we have
  \begin{equation}
    \label{eq:9}
    2\sum_{v=1}^{k-1} b_v(2^k) \leq a(2^k-2).
  \end{equation}
  Thus we will have proved (\ref{eq:8}) if we show that $b_0(2^k)\leq a(2^k-2)$.
  But this is clear, since
  \begin{displaymath}
    b_0(2^k) = 2^{k-1} \leq 2^{\binom k2} = a(2^k-2) \text{ for all } k\geq 1.
  \end{displaymath}

  Note that $b(2^k+1) = b(2^k) + a(2^k-1)$, and that $a(2^k-2) = a(2^k-1)$.
  So we have
  \begin{align*}
    2b(2^k+1) & = 2b(2^k) + 2a(2^k-1)\\
    & \leq 3a(2^k-2) + 2a(2^k-1) & \text{[using (\ref{eq:8})]}\\
    & = 5a(2^k-2).
  \end{align*}
  Together with (\ref{eq:8}), this establishes (\ref{eq:7}) for $n=2^k + \epsilon$.

  The upper bound on $a(n)/b(n+2)$ says that $a(n)\leq b(n+2)$.
  Once again, it may be assumed that $n+2 = 2^k+\epsilon$.
  For $k<3$, the inequality is verified by direct computation.
  For $k\geq 3$, we will show that
  \begin{displaymath}
    b_{k-1}(2^k+\epsilon) + b_{k-2}(2^k+\epsilon) = a(2^k+\epsilon -2).
  \end{displaymath}
  By Theorem~\ref{theorem:refined}, (and dividing by the right hand side) this reduces to:
  \begin{equation}
    \label{eq:18}
    2^{k(k-2) - \binom{k-1}2 - \binom k2} + 2^{(k-1)(k-2) - \binom{k-2}2 - \binom k2} = 1.
  \end{equation}
  The exponents of the left hand side both simplify to $-1$, and the identity follows.
\end{proof}
Theorem \ref{theorem-an-bn+2-comparison} says that $a(n)$ is a good proxy for estimating the growth of $b(n)$.
The order of the sequence $a(n)$ fluctuates widely; when $n=2^k$, $a(n) = 2^k$ and when $n=2^k-1$, $a(n) = 2^{k(k-1)/2}$.
In any case, $a(n)$ is dwarfed by the growth of the partition function:
\begin{displaymath}
  p(n) \sim \frac 1{4n\sqrt 3} \exp(\pi \sqrt{2n/3}) \text{ as } n\to \infty.
\end{displaymath}
For example, Theorem~\ref{theorem:all} predicts a relatively large value for $b(4097)$ (compared to neighboring integers), but even so, the probability of a partition of $4097$ being chiral is
\begin{displaymath}
  b(4097)/p(4097) \approx 4.488811279418092\times 10^{-30},
\end{displaymath}
which is astronomically small.
But using Theorem~\ref{theorem:chiral-towers}, one may easily generate a random chiral partition of $4097$.
A sample run of our code, which is available at \url{http://www.imsc.res.in/\~amri/chiral.sage}, gives a random chiral partition of $4097$ instantaneously on a conventional office desktop:
\begin{lstlisting}
sage: random_chiral_partition(4097).frobenius_coordinates()
([1879, 272, 152, 27, 20, 19, 8, 2, 0],
 [1015, 239, 168, 103, 100, 43, 32, 7, 2])
\end{lstlisting}
Our code based on Theorem~\ref{theorem:chiral-towers} also provides functions for generating random chiral partitions with dimension having fixed $2$-adic valuation, and for enumerating all chiral partitions of $\lambda$ of $n$ (with dimension having fixed $2$-adic valuation if desired).

\section{Chiral Hooks}
\label{sec:chiral-hooks}

Hooks, namely partitions of the form $h(a,b) = (a+1,1^b)$ for $a,b\geq 0$, form an order ideal in Young's lattice.
For a partition $\lambda$ of $n$, let $\lambda^-$ denote the set of partitions of $n-1$ whose Young diagrams are obtained by removing a box from the Young diagram of $\lambda$.
The statistic $f_\lambda$ is given by the recursive rule
\begin{displaymath}
  f_\lambda = \sum_{\mu\in \lambda^-} f_\mu,
\end{displaymath}
with the initial condition $f_{(1)} = 1$.
For hooks, this can be rewritten as:
\begin{displaymath}
  f_{h(a,b)} = f_{h(a-1, b)} + f_{h(a, b-1)},
\end{displaymath}
with the initial condition $f_{h(0,0)} = 0$.
This is nothing but the ancient rule of the Sanskrit prosodist Pingala for generating the \emph{Meru Prastaara} (now known as Pascal's triangle; see \cite{shah-pingala}).
It is well-known (see Theorem~\ref{theorem:odd-multi}) that the number of odd entries in the $n$th row of the Meru Prastaara (i.e., the number of integers $0\leq k\leq n$ for which $\binom nk$ is odd) is $2^{\nu(n)}$.

Chirality depends on the parity of $g_\lambda$ which, like $f_\lambda$, satisfies the recursive identity
\begin{displaymath}
  g_\lambda = \sum_{\mu\in \lambda^-} g_\mu,
\end{displaymath}
but only for $|\lambda|>2$, and with initial conditions $g_{(2)} = 0$ and $g_{(1,1)} = 1$.
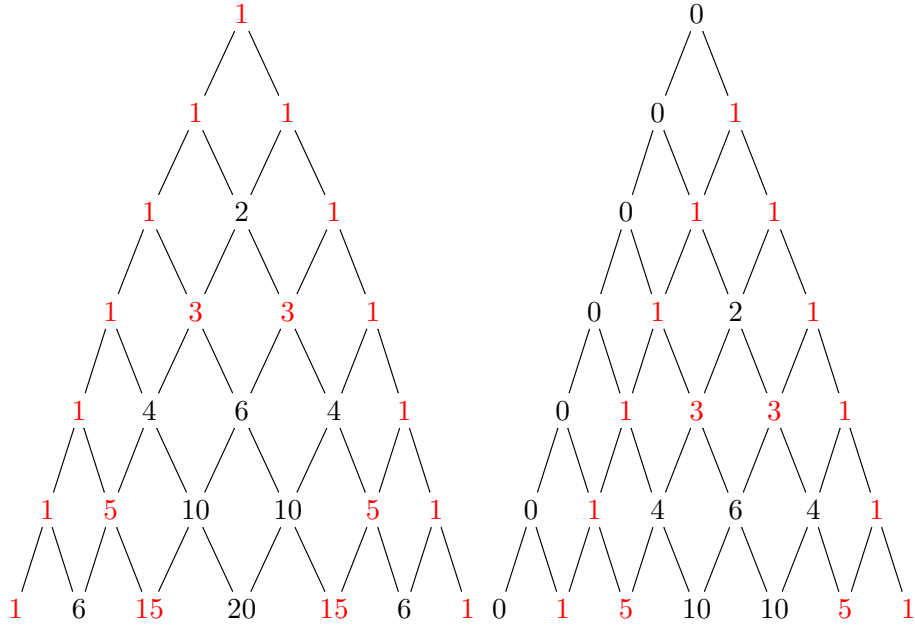
\begin{figure}[h]
  \begin{displaymath}
    \xymatrix@C=0em{
      &&&&&&\textcolor{red}{1}\ar@{-}[dl]\ar@{-}[dr]&&&&&&\\
      &&&&&\textcolor{red}{1}\ar@{-}[dl]\ar@{-}[dr]&&\textcolor{red}{1}\ar@{-}[dl]\ar@{-}[dr]&&&&&\\
      &&&&\textcolor{red}{1}\ar@{-}[dl]\ar@{-}[dr]&&2\ar@{-}[dl]\ar@{-}[dr]&&\textcolor{red}{1}\ar@{-}[dl]\ar@{-}[dr]&&&&\\
      &&& \textcolor{red}{1}\ar@{-}[dl]\ar@{-}[dr] && \textcolor{red}{3}\ar@{-}[dl]\ar@{-}[dr] && \textcolor{red}{3}\ar@{-}[dl]\ar@{-}[dr] && \textcolor{red}{1}\ar@{-}[dl]\ar@{-}[dr] &&&\\
      && \textcolor{red}{1}\ar@{-}[dl]\ar@{-}[dr] && 4\ar@{-}[dl]\ar@{-}[dr] && 6\ar@{-}[dl]\ar@{-}[dr] && 4\ar@{-}[dl]\ar@{-}[dr] && \textcolor{red}{1}\ar@{-}[dl]\ar@{-}[dr] &&\\
      &\textcolor{red}{1}\ar@{-}[dl]\ar@{-}[dr] && \textcolor{red}{5}\ar@{-}[dl]\ar@{-}[dr] && 10\ar@{-}[dl]\ar@{-}[dr] && 10\ar@{-}[dl]\ar@{-}[dr] && \textcolor{red}{5}\ar@{-}[dl]\ar@{-}[dr] && \textcolor{red}{1}\ar@{-}[dl]\ar@{-}[dr]\\
      \textcolor{red}{1} && 6 && \textcolor{red}{15} && 20 && \textcolor{red}{15} && 6 && \textcolor{red}{1}
    }
    \xymatrix@C=0em{
      &&&&&&0\ar@{-}[dl]\ar@{-}[dr]&&&&&&\\
      &&&&&0\ar@{-}[dl]\ar@{-}[dr]&&\textcolor{red}{1}\ar@{-}[dl]\ar@{-}[dr]&&&&&\\
      &&&&0\ar@{-}[dl]\ar@{-}[dr]&&\textcolor{red}{1}\ar@{-}[dl]\ar@{-}[dr]&&\textcolor{red}{1}\ar@{-}[dl]\ar@{-}[dr]&&&&\\
      &&& 0\ar@{-}[dl]\ar@{-}[dr] && \textcolor{red}{1}\ar@{-}[dl]\ar@{-}[dr] && 2\ar@{-}[dl]\ar@{-}[dr] && \textcolor{red}{1}\ar@{-}[dl]\ar@{-}[dr] &&&\\
      && 0\ar@{-}[dl]\ar@{-}[dr] && \textcolor{red}{1}\ar@{-}[dl]\ar@{-}[dr] && \textcolor{red}{3}\ar@{-}[dl]\ar@{-}[dr] && \textcolor{red}{3}\ar@{-}[dl]\ar@{-}[dr] && \textcolor{red}{1}\ar@{-}[dl]\ar@{-}[dr] &&\\
      &0\ar@{-}[dl]\ar@{-}[dr] && \textcolor{red}{1}\ar@{-}[dl]\ar@{-}[dr] && 4\ar@{-}[dl]\ar@{-}[dr] && 6\ar@{-}[dl]\ar@{-}[dr] && 4\ar@{-}[dl]\ar@{-}[dr] && \textcolor{red}{1}\ar@{-}[dl]\ar@{-}[dr]\\
      0 && \textcolor{red}{1} && \textcolor{red}{5} && 10 && 10 && \textcolor{red}{5} && \textcolor{red}{1}
    }
  \end{displaymath}
  \caption{The numbers $f_\lambda$ (left) and $g_\lambda$ (right) for hooks.}
  \label{fig:Merus}
\end{figure}
When restricted to hooks, this results in a shifted version (see Figure~\ref{fig:Merus}) of the Meru Prastaara.
We have:
\begin{displaymath}
  g_{h(a, b)} =
  \begin{cases}
    f_{h(a, b-1)} & \text{if } b > 0,\\
    0 & \text{otherwise}.
  \end{cases}
\end{displaymath}
An immediate consequence is:
\begin{theorem}
  The hook $h(a,b)$ is chiral if and only if $b>0$ and $\binom{a+b-1}{a}$ is odd.
  Thus the number of chiral hooks of size $n$ is $2^{\nu(n-1)}$.
\end{theorem}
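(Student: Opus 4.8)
The plan is to reduce the chirality of a hook to the parity of a single binomial coefficient, and then to count using the classical description of the odd entries in a row of Pascal's triangle. By Theorem~\ref{theorem:chirality-gla}, $h(a,b)$ is chiral if and only if $g_{h(a,b)}$ is odd, so the first step is to compute $g_{h(a,b)}$ modulo $2$. The recursion for $g$ recorded just above the statement gives $g_{h(a,b)}=0$ when $b=0$, so a one-row hook is never chiral, and $g_{h(a,b)}=f_{h(a,b-1)}$ when $b>0$. First I would evaluate $f_{h(a,b-1)}$ directly: a hook with arm $a$ and leg $b-1$ admits one standard tableau for each choice of which of its $a+b-1$ non-corner entries fill the arm, so $f_{h(a,b-1)}=\binom{a+b-1}{a}$; this is the shifted Meru Prastaara on the right of Figure~\ref{fig:Merus}. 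Therefore $h(a,b)$ is chiral if and only if $b>0$ and $\binom{a+b-1}{a}$ is odd, which is the first assertion of the theorem.

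For the enumeration I would parametrize the hooks of size $n$ by the pairs $(a,b)$ with $a,b\geq 0$ and $a+b=n$, so that their $g$-values fill a single row of the triangle in Figure~\ref{fig:Merus}, namely a leading $0$ together with the entries $\binom{n-1}{0},\dots,\binom{n-1}{n-1}$. The chiral ones are exactly those with $b\geq 1$, equivalently $0\leq a\leq n-1$, for which $\binom{a+b-1}{a}=\binom{n-1}{a}$ is odd. The omitted one-row hook ($b=0$, $a=n$) does no harm, since it would only contribute $\binom{n-1}{n}=0$, which is even. Hence the number of chiral hooks of size $n$ is the number of indices $a\in\{0,\dots,n-1\}$ with $\binom{n-1}{a}$ odd, and by the parity count for binomial coefficients (Theorem~\ref{theorem:odd-multi}, equivalently Lucas' theorem) this equals $2^{\nu(n-1)}$.

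The only external ingredient is the fact that the $m$th row of Pascal's triangle has exactly $2^{\nu(m)}$ odd entries, applied with $m=n-1$; everything else is bookkeeping, namely the tableau count $f_{h(a,b-1)}=\binom{a+b-1}{a}$ and the treatment of the boundary case $b=0$. I expect the delicate point to be purely one of indexing: one must match the parametrization $a+b=n$ with the correct Pascal row $\binom{n-1}{\,\cdot\,}$, so that the exponent is $\nu(n-1)$ and not a neighbouring value. Conceptually, the key simplification---already supplied by the recursion for $g$---is that chirality of a hook is governed by a single entry of a shifted Pascal triangle, so the statement reduces directly to this classical parity count and avoids the $2$-core tower analysis used for general partitions in Theorem~\ref{theorem:chiral-towers}.
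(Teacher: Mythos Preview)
Your approach is exactly the paper's: use the identity $g_{h(a,b)}=f_{h(a,b-1)}$ displayed just before the theorem, recognize $f_{h(a,b-1)}=\binom{a+b-1}{a}$, and finish with the Pascal-row parity count. The first half of your argument (the characterisation ``$b>0$ and $\binom{a+b-1}{a}$ odd'') is correct and matches the paper verbatim.

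There is, however, a genuine indexing slip in your enumeration step---precisely the ``delicate point'' you flagged. By the paper's definition $h(a,b)=(a+1,1^b)$, so $|h(a,b)|=a+b+1$, not $a+b$. Hence hooks of size $n$ are parametrised by $a+b=n-1$, the relevant binomial is $\binom{a+b-1}{a}=\binom{n-2}{a}$, and the count of chiral hooks of size $n$ is $2^{\nu(n-2)}$, not $2^{\nu(n-1)}$. (Sanity check at $n=4$: the $g$-row in Figure~\ref{fig:Merus} is $0,1,2,1$, giving two chiral hooks, and indeed $2^{\nu(2)}=2$ while $2^{\nu(3)}=4$.) Your off-by-one happens to reproduce what appears to be a typo in the stated exponent; with the correct parametrisation everything else in your write-up---including the treatment of the boundary case $b=0$, which then contributes $\binom{n-2}{n-1}=0$---goes through unchanged.
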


\section{Chirality for Permutation Representations}
\label{sec:perm-reps}

\subsection{Characterization of Chiral Permutation Representations}

For a positive integer
\begin{equation}
  \label{eq:13}
  n = 2^{k_0} + \dotsb + 2^{k_r},\, 0\leq k_0<\dotsb<k_r,
\end{equation}
define $\bin(n)=\{k_0,\ldots,k_r\}$, a set of cardinality $\nu(n)$.
We say that a partition $\lambda = (\lambda_1,\dotsc,\lambda_l)$ of $n$ is \emph{neat} when $\bin(n)$ is the disjoint union of $\bin(\lambda_1),\dotsc,\bin(\lambda_l)$.
Thus, the number of neat partitions of $n$ is the Bell number $B_{\nu(n)}$.
Given $\lambda$ and a part $\la_i$ of $\la$ with $\bin(\la_i) \subseteq \bin(n)$, note that $\la$ is neat if and only if the partition obtained by removing $\la_i$ from $\la$ is neat.
\begin{theorem}  \label{perm.chir.descr} (Characterization) Let $\la$ be a partition of $n$.  Then  $\C[X_\lambda]$ is chiral if and only if one of the following holds:
\begin{enumerate}
\item  $\lambda$ has exactly two or three odd parts, and the partition obtained by removing these odd parts from $\la$ is neat.
\item  $\lambda$ has exactly one odd part $a$, $\bin(a)\subseteq \bin(n-2)$, and the partition obtained by removing $a$ from $\la$ is neat. 
  \end{enumerate}
  \end{theorem}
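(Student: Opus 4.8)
The plan is to imitate the transposition computation of Section~\ref{sec:2-core-towers-chiral}. Since $\C[X_\lambda]$ is a permutation representation, $\rho(s_1)$ is a permutation matrix, so $\det\rho(s_1)$ is the sign of the involution that $s_1=(1,2)$ induces on $X_\lambda$. A set-partition is fixed by this involution exactly when $1$ and $2$ lie in the same block, and the remaining configurations are interchanged in pairs; hence $\det\rho(s_1)=(-1)^t$, where $t$ is the number of transpositions, that is, half the number of configurations with $1$ and $2$ in different blocks. Thus $\C[X_\lambda]$ is chiral if and only if $t$ is odd, the exact analogue of ``$\lambda$ is chiral iff $g_\lambda$ is odd''. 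Placing $1$ in block $i$, $2$ in block $j$ with $i\neq j$, and distributing the remaining $n-2$ letters gives the closed form
\[
  t=\sum_{1\le i<j\le l}\binom{n-2}{\lambda_1,\dots,\lambda_i-1,\dots,\lambda_j-1,\dots,\lambda_l},
\]
so everything is reduced to the parity of this sum.

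The engine is then Kummer's theorem, exactly as used for $v_2$ of binomials in the proof of Lemma~\ref{lemma:reduction-to-powers-of-2}: a multinomial coefficient is odd precisely when the summands have pairwise disjoint binary supports. So the $(i,j)$ term is odd if and only if decreasing $\lambda_i$ and $\lambda_j$ each by one produces a \emph{neat} partition of $n-2$, and $\C[X_\lambda]$ is chiral if and only if the number of such \emph{good} pairs $\{i,j\}$ is odd. First I would run a units-digit argument: inspecting bit $0$ of $n-2$ shows that a good pair can never consist of two even parts, and that the number of summands carrying the units bit equals $(\text{number of odd parts})+2-2\cdot(\text{number of odd parts among }i,j)$. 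Matching this to $n\bmod 2$ with no carry forces $\lambda$ to have exactly one, two, or three odd parts, with the good pair lying among the odd parts (for two or three) or consisting of the unique odd part together with an even part (for one). This is precisely the trichotomy of the theorem.

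For two or three odd parts I expect a clean collapse. Since an odd number's binary expansion loses only its units digit upon subtracting $1$, replacing $a_s$ by $a_s-1$ changes nothing above bit $0$, and all the other entries in the relevant collection are even; consequently \emph{every} pair among the odd parts is good simultaneously, namely exactly when subtracting $1$ from each odd part yields a neat partition (of $n$ minus the number of odd parts). As $\binom{2}{2}=1$ and $\binom{3}{2}=3$ are both odd, chirality is then equivalent to this single neatness condition --- the operation of ``removing the odd parts'' in case~(\ref{perm.chir.descr}) above, giving condition~(1). The same computation shows that four or more odd parts (resp.\ five or more) admit no good pair, so such $\lambda$ are never chiral, consistently with the absence of a corresponding clause.

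The main obstacle will be the single-odd-part case, which feeds condition~(2) and does not collapse by a mere bit shift, because decrementing an even part $\mu_t$ is not a one-digit operation. Here the good pairs are indexed by the even parts $\mu_t$, each contributing the requirement that $\{a-1\}\cup\{\mu_t-1\}\cup\{\mu_s\}_{s\neq t}$ have disjoint supports. I would first extract the necessary condition $\bin(a)\subseteq\bin(n-2)$: since $a-1$ must be one of the disjoint summands and $\bin(a)=\bin(a-1)\cup\{0\}$ with $0\in\bin(n-2)$, the containment $\bin(a-1)\subseteq\bin(n-2)$ is forced, and is equivalent to $\bin(a)\subseteq\bin(n-2)$; if it fails, there are no good pairs and $\lambda$ is not chiral. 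Granting it, the crux is to show that the number of admissible $\mu_t$ has the same parity as the indicator that the even parts form a neat partition of $n-a$. I expect this to come from an involution on the admissible even parts that has a fixed point exactly when this neatness holds, so that failure of neatness pairs the admissible $\mu_t$ and yields an even count. Establishing this pairing cleanly is the delicate step; once it is in place, the single-odd-part analysis produces condition~(2), and assembling the three regimes gives the stated characterization.
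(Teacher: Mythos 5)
Your reduction is the paper's reduction, step for step: the orbit count of $s_1$ on $X_\lambda$ is Lemma~\ref{lemma:reduce-two-parts}, the parity criterion for multinomials is Theorem~\ref{theorem:odd-multi}, the bit-$0$ argument eliminating zero or at least four odd parts is Lemma~\ref{lemma:many-odd-parts}, and your observation that decrementing an odd entry merely deletes its units bit is exactly Lemma~\ref{ingredien}, which handles the three-odd-part case just as you describe. The genuine gap is precisely where you flag it: the single-odd-part case. You correctly extract the necessary condition $\bin(a)\subseteq\bin(n-2)$, but the parity claim --- that the number of even parts $\mu_t$ for which $(a-1,\mu_1,\dotsc,\mu_t-1,\dotsc,\mu_m)$ is neat is odd exactly when $(\mu_1,\dotsc,\mu_m)$ is a neat partition of $n-a$ --- is only conjectured, via an involution you ``expect'' to exist and never construct. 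As written, the proof stops at its hardest point. Note also that your pairing cannot simply match equal even parts: for $\lambda=(1,2,6)$ with $n=9$, \emph{both} even parts are good (the summands are $\binom{7}{0,1,6}=7$ and $\binom{7}{0,2,5}=21$, both odd), so an involution would have to match parts of distinct sizes, and nothing in the proposal says how. The paper needs no involution: it collapses the whole sum by the elementary identity
\begin{displaymath}
  \sum_{t=1}^m \binom{n-2}{a-1,\mu_1,\dotsc,\mu_t-1,\dotsc,\mu_m}
  = \binom{n-2}{a-1}\binom{n-a}{\mu_1,\dotsc,\mu_m},
\end{displaymath}
obtained by factoring out $\binom{n-2}{a-1}$ and applying the Pascal recurrence $\sum_t\binom{n-a-1}{\mu_1,\dotsc,\mu_t-1,\dotsc,\mu_m}=\binom{n-a}{\mu_1,\dotsc,\mu_m}$; reading this modulo $2$ yields Lemma~\ref{lemma:one-odd-part} immediately. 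That identity is the missing idea, and any correct involution would in effect be a bijective shadow of it.

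One further caution about the endgame. What your two- and three-odd-part computation actually establishes (and what the paper's counting lemmas in fact use) is that the partition of $n-2$ (resp.\ $n-3$) obtained by \emph{subtracting} $1$ from each odd part is neat. This implies, but is strictly stronger than, neatness of the partition obtained by \emph{deleting} the odd parts, because it additionally forces the sets $\bin(a_i-1)$ to be disjoint from each other and from the supports of the even parts: for $\lambda=(3,3,2)$, deletion leaves the neat partition $(2)$ of $2$, yet $(2,2,2)$ is not a neat partition of $6$, and indeed $\lambda$ is not chiral (the three summands are $90$, $60$, $60$). So when you assert that your subtract-one condition ``is precisely'' the removal condition in the statement, you are papering over a real implication that goes only one way; the condition your argument proves, and the one consistent with the enumeration, is the subtract-one formulation.
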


The proof of Theorem \ref{perm.chir.descr} begins with the following lemma:
\begin{lemma}
  \label{lemma:reduce-two-parts}
  For a partition $\lambda = (\lambda_1,\dotsc,\lambda_l)$ of $n$, the representation $\C[X_\lambda]$ is chiral if and only if
  \begin{displaymath} \label{summ}
    \sum_{1\leq i<j\leq l} \binom{n-2}{\lambda_1, \dotsc, \lambda_i - 1, \dotsc, \lambda_j -1, \dotsc, \lambda_l}
    \text{ is odd.}
  \end{displaymath}
\end{lemma}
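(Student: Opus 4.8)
The plan is to exploit the general principle that for a permutation representation the determinant of a group element equals the sign of the permutation it induces on the underlying set. The space $\C[X_\lambda]$ carries the natural basis of indicator functions indexed by $X_\lambda$, and $g\in S_n$ sends the basis vector at $x$ to the one at $gx$, so $\rho(g)$ is a permutation matrix with $\det(\rho(g)) = \operatorname{sgn}(x\mapsto gx)$. Since $\det\circ\rho$ is either trivial or the sign character, and the latter is pinned down by its value on any single transposition, it suffices to compute $\det(\rho(s_1))$ for $s_1 = (1,2)$. Thus $\C[X_\lambda]$ is chiral if and only if the permutation of $X_\lambda$ induced by $s_1$ has sign $-1$.

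Next I would examine the cycle type of this induced permutation. Because $s_1$ is an involution, it acts on $X_\lambda$ as a product of fixed points and transpositions, so its sign is $(-1)^m$, where $m$ is the number of $2$-cycles. An ordered set-partition $(X_1,\dotsc,X_l)$ is fixed by $s_1$ exactly when $1$ and $2$ lie in a common block, and is moved exactly when they lie in different blocks; in the latter case $s_1$ pairs it with the set-partition obtained by interchanging $1$ and $2$. Hence $2m$ is the number of elements of $X_\lambda$ in which $1$ and $2$ occupy distinct blocks, and $\C[X_\lambda]$ is chiral precisely when $m$ is odd.

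The remaining step is the enumeration. Placing $1$ in block $i$ and $2$ in block $j$ with $i\neq j$, the remaining $n-2$ symbols must fill blocks of sizes $\lambda_1,\dotsc,\lambda_i-1,\dotsc,\lambda_j-1,\dotsc,\lambda_l$, giving exactly $\binom{n-2}{\lambda_1,\dotsc,\lambda_i-1,\dotsc,\lambda_j-1,\dotsc,\lambda_l}$ possibilities. Summing over all ordered pairs $(i,j)$ with $i\neq j$ counts each moved set-partition once; as the multinomial coefficient is symmetric in $i$ and $j$, this ordered sum equals twice the sum over $i<j$. Therefore
\[
  m = \sum_{1\leq i<j\leq l}\binom{n-2}{\lambda_1,\dotsc,\lambda_i-1,\dotsc,\lambda_j-1,\dotsc,\lambda_l},
\]
and chirality amounts to the parity condition in the statement. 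The one place demanding care is this final bookkeeping: the factor of $2$ from passing to ordered pairs and the factor of $\tfrac12$ implicit in counting $2$-cycles cancel exactly, which is why the unordered sum itself---carrying no extra factor---controls the parity.
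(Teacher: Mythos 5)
Your proof is correct and follows essentially the same route as the paper: both reduce chirality to the sign of the permutation that $s_1=(1,2)$ induces on $X_\lambda$, identify the moved elements as those set-partitions with $1$ and $2$ in different blocks, and count them by multinomial coefficients. Your explicit ordered-pair bookkeeping (the factor of $2$ cancelling against the two elements per $2$-cycle) is just a slightly more detailed version of the paper's observation that each size-$2$ orbit corresponds to a set-partition of $\{3,\dotsc,n\}$ of the reduced shape.
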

\begin{proof}
  Consider the action of $s_1 \in S_n$ on $X_\lambda$.
  Its orbits in $X_\lambda$ are of cardinality $1$ or $2$.
  The representation $\C[X_\lambda]$ is chiral if and only if the number of orbits of size $2$ is odd.
  The orbits of size two are formed by elements of $X_\lambda$ where $1$ and $2$ occur in different parts of the partition.
  Such an orbit is determined by a set-partition of $\{3, 4, \dotsc, n\}$ obtained by removing $1$ and $2$ from the set-partitions in the orbits.
  This gives the formula in the lemma.
\end{proof}

The following result is well-known (see for example,  {\cite[Theorem~1]{fine}}), and tacitly used throughout this section:

\begin{theorem}
  \label{theorem:odd-multi}
  For a partition $\lambda$ of $n$, the multinomial coefficient
  \begin{displaymath}
    \binom n{\lambda_1,\dotsc,\lambda_l}
  \end{displaymath}
  is odd if and only if $\lambda$ is neat.  
\end{theorem}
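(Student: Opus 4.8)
The plan is to reduce the oddness of the multinomial coefficient to a statement about $2$-adic valuations, and then to translate that valuation condition into the combinatorial condition of neatness.

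First I would invoke Legendre's formula in the case $p=2$, which states that $v_2(m!) = m - \nu(m)$ for every non-negative integer $m$, where $\nu(m)$ is the number of ones in the binary expansion of $m$. Applying this to the numerator and to each factor of the denominator of $\binom{n}{\lambda_1,\dotsc,\lambda_l} = n!/(\lambda_1!\cdots\lambda_l!)$, and using the fact that $\sum_i \lambda_i = n$ since $\lambda$ is a partition of $n$, I obtain the clean identity
\begin{equation*}
  v_2\binom{n}{\lambda_1,\dotsc,\lambda_l} = \Bigl(\sum_{i=1}^l \nu(\lambda_i)\Bigr) - \nu(n).
\end{equation*}
Thus the multinomial coefficient is odd precisely when $\sum_i \nu(\lambda_i) = \nu(n)$.

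Next I would identify $\nu(m)$ with the cardinality $|\bin(m)|$ and recall the elementary subadditivity $\nu(a+b) \leq \nu(a) + \nu(b)$, with equality if and only if $\bin(a)\cap\bin(b) = \emptyset$, i.e., there are no carries in the binary addition of $a$ and $b$. Iterating over the parts of $\lambda$ gives $\nu(n) = \nu(\sum_i \lambda_i) \leq \sum_i \nu(\lambda_i)$, with equality if and only if the sets $\bin(\lambda_1),\dotsc,\bin(\lambda_l)$ are pairwise disjoint. Whenever equality holds, the binary addition produces no carries, so the ones of $n$ sit exactly at the union of the positions occupied by the parts; hence $\bin(n)$ is the disjoint union of the $\bin(\lambda_i)$, which is precisely the statement that $\lambda$ is neat. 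Combining this equivalence with the valuation identity above completes the proof.

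The only delicate point is the equality condition in the subadditivity of $\nu$. I would establish it by induction on the number of parts $l$, at each stage either appealing to Kummer's theorem (the $2$-adic valuation of $\binom{a+b}{a}$ equals the number of carries when adding $a$ and $b$ in base two) or analyzing carries directly: adding two binary numbers lowers the total count of ones by exactly the number of carry propagations, so no loss occurs exactly when the binary supports are disjoint. Everything else is a direct computation, so I expect no real obstacle beyond carefully tracking this carry/disjointness correspondence.
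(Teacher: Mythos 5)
Your proof is correct. It differs from the paper in that the paper offers no proof at all: it records the theorem as well-known and cites Fine's Theorem~1, which is Lucas' congruence $\binom{n}{k}\equiv\prod_i\binom{n_i}{k_i}\pmod p$ for base-$p$ digits $n_i,k_i$; the multinomial statement follows from that by a digit-by-digit analysis (the coefficient is odd iff in every binary position at most one part has a $1$, which is neatness). You instead argue via Legendre's formula $v_2(m!)=m-\nu(m)$, obtaining the exact valuation $v_2\binom{n}{\lambda_1,\dotsc,\lambda_l}=\sum_i\nu(\lambda_i)-\nu(n)$, and then characterize the equality case of subadditivity of $\nu$ through carries. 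This buys something concrete: your identity is the quantitative refinement that the paper actually uses elsewhere (e.g., the proof of Lemma~\ref{lemma:reduction-to-powers-of-2} invokes $v_2\binom nk=\nu(k)+\nu(n-k)-\nu(n)$, which is your formula for $l=2$), whereas the Lucas route only gives the parity directly. Your handling of the one delicate point is adequate: you correctly observe that equality $\sum_i\nu(\lambda_i)=\nu(n)$ forces carry-free addition at every stage of the iteration, and that carry-free addition makes $\bin$ of each partial sum the union of the $\bin(\lambda_i)$ absorbed so far, so that pairwise disjointness of the $\bin(\lambda_i)$ automatically upgrades to $\bin(n)=\bin(\lambda_1)\sqcup\dotsb\sqcup\bin(\lambda_l)$, which is what the definition of neat demands --- a step that would be a genuine gap if omitted, since disjointness of supports alone is a priori weaker than the partition-of-$\bin(n)$ condition. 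In short: a complete, self-contained proof, slightly stronger in content than the citation the paper relies on.
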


{\bf Remark:}  This implies that the number of partitions $\la$ of $n$ so that the dimension of $\C[X_\lambda]$ is odd is equal to $B_{\nu(n)}$.

\begin{lemma}
  \label{lemma:many-odd-parts}
  If the partition $\lambda = (\lambda_1,\dotsc,\lambda_l)$ has either no odd parts or at least four odd parts, then $\C[X_\lambda]$ is not chiral.
  Consequently, if $n$ is even and $\C[X_\lambda]$ is chiral, then $\lambda$ has exactly two odd parts.
  If $n$ is odd and $\C[X_\lambda]$ is chiral, then $\lambda$ has exactly one or three odd parts.
\end{lemma}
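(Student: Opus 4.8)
The plan is to reduce the claim about chirality to a parity statement about a sum of multinomial coefficients, via Lemma~\ref{lemma:reduce-two-parts}, and then use Theorem~\ref{theorem:odd-multi} (neatness $\Leftrightarrow$ odd multinomial coefficient) to control which terms of that sum contribute a $1$ modulo $2$. Write $\lambda = (\lambda_1,\dots,\lambda_l)$, and for a pair $i<j$ let $M_{ij}$ denote the multinomial coefficient $\binom{n-2}{\lambda_1,\dots,\lambda_i-1,\dots,\lambda_j-1,\dots,\lambda_l}$ appearing in Lemma~\ref{lemma:reduce-two-parts}. By that lemma, $\C[X_\lambda]$ is chiral if and only if $\sum_{i<j} M_{ij}$ is odd, so the entire task is to count, modulo $2$, the number of pairs $(i,j)$ for which $M_{ij}$ is odd. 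The point I want to exploit is that $M_{ij}$ is itself a multinomial coefficient of a partition of $n-2$, namely the partition $\lambda^{(ij)}$ obtained from $\lambda$ by decrementing the parts $\lambda_i$ and $\lambda_j$ by one, so by Theorem~\ref{theorem:odd-multi}, $M_{ij}$ is odd if and only if $\lambda^{(ij)}$ is neat as a partition of $n-2$.

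The key step will be to analyze the parity condition $\lambda_i - 1, \lambda_j - 1$ on the binary expansions. First I would dispose of the two extreme cases. If $\lambda$ has no odd parts, then every $\lambda_i$ is even, so $\lambda_i - 1$ and $\lambda_j - 1$ are both odd, contributing the bit $2^0$ to each of $\bin(\lambda_i-1)$ and $\bin(\lambda_j-1)$; these overlap, so $\lambda^{(ij)}$ is never neat, forcing every $M_{ij}$ to be even and the sum to be even — hence $\C[X_\lambda]$ is not chiral. For the case of at least four odd parts, I would argue that a nonzero contribution $M_{ij}$ requires $i$ and $j$ to be chosen among the odd parts (decrementing an even part introduces a unit bit that collides with the unit bit of the decremented odd part, or among two even parts produces two colliding unit bits as above), and then show that the number of neat configurations among $\binom{(\text{number of odd parts})}{2}$ choices is even. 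The cleanest route is to observe that if $t \geq 4$ is the number of odd parts and the remaining parts are held fixed, then the surviving pairs correspond to selecting two of the odd parts to ``absorb'' their unit bits against the $2^0$ and $2^1$ slots of $n-2$; a counting/involution argument should show this count is even, with the driving input being that $\binom{t}{2}$-type selections admit a parity-preserving pairing when $t \geq 4$.

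The main obstacle, and the part requiring care, is precisely this last parity count when $t \geq 4$: one must show not merely that the contributing pairs are restricted to odd parts, but that their total number is even. The difficulty is that whether $M_{ij}$ is odd depends on a global neatness condition — the bits of $\lambda_i - 1$ and $\lambda_j - 1$ together with all the other (unchanged) parts must exactly tile $\bin(n-2)$ with no carries — so the contributing pairs are not simply ``all pairs of odd parts.'' I expect to set up an explicit involution on the set of contributing pairs (for instance, fixing the role of the two lowest bits of $n-2$ and swapping which odd part supplies the ``extra'' low bit), showing it is fixed-point-free when $t \geq 4$, which yields evenness. The complementary consequences stated in the lemma then follow immediately by parity of $n$: since decrementing two parts by one changes the number of odd parts by $0$ or $\pm 2$, and since a neat partition of $n-2$ must have the correct parity, an even $n$ forces exactly two odd parts in $\lambda$ and an odd $n$ forces exactly one or three, the cases $0$ and $\geq 4$ having just been excluded.
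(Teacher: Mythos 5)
Your reduction via Lemma~\ref{lemma:reduce-two-parts} and Theorem~\ref{theorem:odd-multi} is the right frame, and your treatment of the no-odd-parts case is correct. But in the case of at least four odd parts there is a genuine gap: the entire burden of that case is placed on a fixed-point-free involution on the set of ``contributing pairs,'' which you never construct --- you write that you \emph{expect} such an involution to exist and that a counting argument \emph{should} show evenness. As written, the key step is a conjecture, not a proof. Moreover, your intermediate reasoning for restricting to pairs of odd parts is itself off: if $\lambda_i$ is odd and $\lambda_j$ is even, then $\lambda_i-1$ is \emph{even} and has no unit bit, so there is no collision between the two decremented parts; the obstruction comes from the untouched odd parts, which you do not invoke.

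The deeper issue is that the involution solves a problem that does not exist: the set of contributing pairs is empty, and seeing this is the one missing idea. Decrementing $\lambda_i$ and $\lambda_j$ flips the parity of exactly two parts, so if $\lambda$ has $t\geq 4$ odd parts, the modified partition $\lambda^{(ij)}$ has at least $t-2\geq 2$ odd parts regardless of how $i<j$ are chosen (two odd parts leave $t-2$; one odd and one even leave $t$; two even leave $t+2$). Any partition with two or more odd parts fails to be neat, since the binary expansions of two odd numbers both contain $2^0$, so the sets $\bin(\cdot)$ cannot be disjoint; by Theorem~\ref{theorem:odd-multi} every single summand $M_{ij}$ is therefore even, term by term, with no global parity count needed. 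This is exactly the paper's (one-sentence) proof: in every summand of Lemma~\ref{lemma:reduce-two-parts}, at least two of the resulting integers are odd. Your final paragraph deducing the stated consequences from the parity of $n$ is fine once this case is repaired.
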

\begin{proof}
  By Theorem~\ref{theorem:odd-multi}, each of the summands in Lemma~\ref{lemma:reduce-two-parts} will be even if either of the hypotheses is true, because at least two of the integers in the partitions obtained by reducing two parts of $\lambda$ by $1$ will be odd.
\end{proof}

Here is an ingredient for the case that $\la$ has exactly three odd parts; its proof is immediate from Theorem \ref{theorem:odd-multi}.
\begin{lemma} \label{ingredien}
Let $n_1+ \cdots+ n_m=n$.  If $n_1$ is odd and $n_2, \ldots, n_m$ are all even, then
    \begin{displaymath}
      \binom n{n_1,\dotsc,n_m} \equiv \binom{n-1}{n_1-1,n_2,\dotsc,n_m} \mod 2.
    \end{displaymath}
\end{lemma}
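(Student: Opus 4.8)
The plan is to translate both sides into the language of neatness via Theorem~\ref{theorem:odd-multi} and then observe that the two neatness conditions are literally the same once the lowest binary digit is accounted for. By Theorem~\ref{theorem:odd-multi}, the coefficient $\binom{n}{n_1,\dotsc,n_m}$ is odd precisely when the tuple $(n_1,\dotsc,n_m)$ is neat, i.e.\ $\bin(n)$ is the disjoint union $\bin(n_1)\sqcup\dotsb\sqcup\bin(n_m)$; likewise $\binom{n-1}{n_1-1,n_2,\dotsc,n_m}$ is odd precisely when $(n_1-1,n_2,\dotsc,n_m)$ is neat, i.e.\ $\bin(n-1)=\bin(n_1-1)\sqcup\bin(n_2)\sqcup\dotsb\sqcup\bin(n_m)$. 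So it suffices to show these two neatness conditions are equivalent.

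First I would record the effect of subtracting $1$ on the relevant binary supports. Since $n_1$ is odd, its lowest bit is set, so subtracting $1$ merely clears that bit with no borrow propagating: $\bin(n_1-1)=\bin(n_1)\setminus\{0\}$. Since $n$ is a sum of one odd term and even terms, $n$ is odd, and by the same reasoning $\bin(n-1)=\bin(n)\setminus\{0\}$. Finally, because $n_2,\dotsc,n_m$ are even, $0\notin\bin(n_i)$ for every $i\geq 2$; thus among $\bin(n_1),\dotsc,\bin(n_m)$ (and among $\bin(n_1-1),\bin(n_2),\dotsc,\bin(n_m)$) the element $0$ occurs in at most one set.

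With these facts in hand the equivalence is immediate: removing $0$ from every set in the identity $\bin(n)=\bin(n_1)\sqcup\dotsb\sqcup\bin(n_m)$ yields exactly $\bin(n-1)=\bin(n_1-1)\sqcup\bin(n_2)\sqcup\dotsb\sqcup\bin(n_m)$, and conversely adjoining $0$ back to $\bin(n-1)$ and to $\bin(n_1-1)$ recovers the original identity. Disjointness is preserved in both directions because $0$ lies in none of $\bin(n_2),\dotsc,\bin(n_m)$, so no new overlap can be created. Hence $(n_1,\dotsc,n_m)$ is neat if and only if $(n_1-1,n_2,\dotsc,n_m)$ is neat, and the two multinomial coefficients share the same parity.

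There is essentially no obstacle here; the only point requiring care is the bookkeeping of the single bit $0$, ensuring that it is present in $\bin(n_1)$ and in $\bin(n)$ but absent from every even part and from both ``minus one'' versions, so that stripping it off passes cleanly between the two disjoint-union identities.
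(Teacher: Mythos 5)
Your proof is correct and is essentially the paper's own argument: the paper dispatches this lemma with the single remark that it is ``immediate from Theorem~\ref{theorem:odd-multi},'' and your write-up is exactly that observation carried out in detail, translating both parities into neatness and matching the two conditions by stripping the bit $0$, which lies only in $\bin(n)$ and $\bin(n_1)$. Your careful bookkeeping (no borrow when subtracting $1$ from the odd numbers $n_1$ and $n$, and $0\notin\bin(n_i)$ for $i\geq 2$) is precisely the content the paper leaves implicit.
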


\begin{lemma}
  \label{lemma:one-odd-part}
  Suppose $\lambda$ has exactly one odd part $a$.   Then $\C[X_\lambda]$ is chiral if and only if both of the following hold:
  \begin{itemize}
  \item $\bin(a)\subseteq \bin(n-2)$.
  \item The partition obtained by removing $a$ from $\la$ is neat. 
  \end{itemize}
\end{lemma}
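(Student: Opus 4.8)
The plan is to start from Lemma~\ref{lemma:reduce-two-parts}, which reduces the chirality of $\C[X_\lambda]$ to the parity of $\sum_{i<j}\binom{n-2}{\dots,\lambda_i-1,\dots,\lambda_j-1,\dots}$, and then to read off each summand's parity through Theorem~\ref{theorem:odd-multi}: the summand indexed by $(i,j)$ is odd precisely when the partition of $n-2$ obtained by reducing $\lambda_i$ and $\lambda_j$ is neat. Since $\lambda$ has a single odd part $a$, the integer $n$ is odd, and I would first dispose of the pairs $(i,j)$ for which neither reduced part is $a$: reducing two even parts (each $\geq 2$) produces two new odd parts, so the reduced partition has three odd parts and cannot be neat, because a neat partition has pairwise disjoint binary supports and hence at most one odd part. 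Thus only the pairs involving $a$ survive, and chirality becomes the assertion that the number of even parts $b_m$ of $\lambda$ for which $\sigma_m:=(a-1,\dots,b_m-1,\dots)$ is neat is odd.

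The heart of the argument is to factor the neatness of $\sigma_m$ into two independent conditions. I would observe that $\sigma_m$ is neat if and only if (A) the parts $b_1,\dots,b_m-1,\dots$ (a partition of $n-a-1$ obtained from $\lambda\setminus a$ by reducing $b_m$) are neat among themselves, and (B) $\bin(a-1)$ and $\bin(n-a-1)$ are disjoint with union $\bin(n-2)$. The key point is that once (A) holds, the supports of the $b_{m'}$ with $m'\neq m$ together with that of $b_m-1$ fill up exactly $\bin(n-a-1)$, so the only remaining constraint involving $a-1$ is the $m$-independent condition (B). Consequently the count of good $m$ equals $\#\{m:\text{(A) holds}\}$ when (B) holds, and is $0$ otherwise.

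Next I would compute $\#\{m:\text{(A) holds}\}$ modulo $2$ via the multinomial Pascal recurrence $\binom{n-a}{b_1,\dots,b_{l-1}}=\sum_m\binom{n-a-1}{\dots,b_m-1,\dots}$ combined with Theorem~\ref{theorem:odd-multi}: each indicator that (A) holds is congruent mod $2$ to the corresponding multinomial coefficient, so the count is congruent to $\binom{n-a}{b_1,\dots,b_{l-1}}$, which is odd exactly when $\lambda\setminus a$ is neat, giving the second bullet. To identify (B) with the first bullet, I would note that (B) says $(a-1,n-a-1)$ is a neat two-part partition of $n-2$, i.e.\ $\binom{n-2}{a-1}$ is odd, which holds iff $\bin(a-1)\subseteq\bin(n-2)$; and since $a$ is odd we have $\bin(a)=\bin(a-1)\cup\{0\}$ while $0\in\bin(n-2)$ (because $n-2$ is odd), so this is equivalent to $\bin(a)\subseteq\bin(n-2)$. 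Combining the three steps, $\C[X_\lambda]$ is chiral iff both bullets hold.

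The step I expect to be the main obstacle is the clean separation yielding condition (B): one must verify that, given (A), the supports of the unreduced and reduced $b$'s genuinely exhaust $\bin(n-a-1)$ independently of $m$, so that the constraint coupling $a-1$ to the remaining parts collapses to a single $m$-free condition. Once that decomposition is in place, the remainder is bookkeeping with binary supports and two applications of Theorem~\ref{theorem:odd-multi}.
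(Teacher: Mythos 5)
Your proof is correct and is essentially the paper's argument recast mod $2$: both discard the even--even pairs, isolate the $m$-independent factor coming from $a-1$, and collapse the remaining sum over $m$ by the multinomial Pascal recurrence into $\binom{n-a}{\lambda_1,\dotsc,\lambda_m}$, yielding chirality iff $\binom{n-2}{a-1}$ and $\binom{n-a}{\lambda_1,\dotsc,\lambda_m}$ are both odd. The only real difference is that the paper obtains your (A)/(B) decomposition directly from the exact identity $\binom{n-2}{a-1,\lambda_1,\dotsc,\lambda_i-1,\dotsc,\lambda_m}=\binom{n-2}{a-1}\binom{n-a-1}{\lambda_1,\dotsc,\lambda_i-1,\dotsc,\lambda_m}$, applying Theorem~\ref{theorem:odd-multi} once at the end, so the no-carry exhaustion step you flag as the main obstacle (valid, since pairwise disjoint binary supports add without carries) is bypassed entirely there.
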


\begin{proof}
Write $(\lambda_1,\ldots, \lambda_m)$ for the partition obtained by removing $a$ from $\la$. 
Elementary properties of multinomial coefficients give
\begin{displaymath}
\begin{split}
 \sum_{i=1}^m \binom{n-2}{a-1,\lambda_1, \ldots, \lambda_i-1, \ldots, \lambda_m} &= \binom{n-2}{a-1} \sum_{i=1}^m \binom{n-a-1}{\lambda_1, \ldots, \lambda_i-1, \ldots, \lambda_m} \\
 &=    \binom{n-2}{a-1} \binom{n-a}{\lambda_1, \ldots, \lambda_m}.
 \end{split}
 \end{displaymath}
  The result follows, since the other terms from Lemma \ref{lemma:reduce-two-parts} are necessarily even.
\end{proof}

We have all we need to prove the characterization:
\begin{proof} [Proof of Theorem \ref{perm.chir.descr}]
Lemma \ref{lemma:many-odd-parts} breaks the problem into the cases of $1,2$, or $3$ odd parts.  The case of one odd part is Lemma \ref{lemma:one-odd-part}.  The case of two odd parts is straightforward using Lemma \ref{lemma:reduce-two-parts}.  For the case of three odd parts, use Lemma  \ref{ingredien}. 
\end{proof}

\subsection{Counting of Chiral Permutation Representations}

\begin{lemma} \label{count2} Let $n \geq 2$ be even.  The number of partitions $\la$ of $n$ with exactly two odd parts and with $\C[X_\la]$ chiral, is equal to
\begin{displaymath}
  (B_{\nu(n-2)+2} - B_{\nu(n-2) + 1} + B_{\nu(n-2)})/2. 
\end{displaymath}
\end{lemma}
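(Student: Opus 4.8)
The plan is to count the partitions $\lambda$ of the even integer $n$ that have exactly two odd parts and for which $\C[X_\lambda]$ is chiral, using the characterization in Theorem~\ref{perm.chir.descr}(1). By that theorem, such a $\lambda$ consists of two odd parts $a,b$ together with a collection of even parts forming, after removal of $a$ and $b$, a neat partition of $n-a-b$. First I would recast the counting in terms of the underlying data. Writing the two odd parts as $a$ and $b$ and the even parts as $2\mu_1,\dotsc,2\mu_t$, neatness of the even remainder together with the constraint that the whole thing be chiral should reduce to an arithmetic condition on $\bin(a)$, $\bin(b)$, and the $\bin(\mu_i)$. The natural reformulation is to think of assembling $\lambda$ by partitioning the set $\bin(n)$, since neat partitions of an integer correspond to set-partitions of its binary support (the remark after Theorem~\ref{theorem:odd-multi} gives $B_{\nu(\cdot)}$ as the count of neat partitions, reflecting exactly this bijection).

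The key technical step is to handle the two odd parts $a$ and $b$. Since $n$ is even and $a,b$ are its only odd parts, $a+b$ is even, and the binary carries when adding $a$ and $b$ are what make this subtle: $a$ and $b$ are both odd, so they share the bit $2^0$, and a carry necessarily propagates. I expect the cleanest route is to treat the pair $\{a,b\}$ via the quantity $a+b-2$ appearing in Lemma~\ref{lemma:reduce-two-parts}: reducing each of the two odd parts by one yields two even numbers, and I would compute the parity of the resulting multinomial coefficient using Theorem~\ref{theorem:odd-multi}, i.e. determine when the reduced partition is neat. This should show that the odd pair contributes an odd summand precisely when $\bin(a-1)$ and $\bin(b-1)$ are disjoint and their union, together with the $\bin(\mu_i)$, partitions $\bin(n-2)$. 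Counting the number of ways to split off an ordered-then-symmetrized pair from a set-partition of the $\nu(n-2)$-element set $\bin(n-2)$ is what produces the Bell numbers $B_{\nu(n-2)+2}$, $B_{\nu(n-2)+1}$, $B_{\nu(n-2)}$.

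The combinatorial identity at the heart of the count is the statement that the number of set-partitions of a $k$-element set into blocks, two of which are distinguished and non-empty (the blocks feeding $a-1$ and $b-1$) with an inclusion–exclusion correction for the forced shared bit, equals $(B_{k+2}-B_{k+1}+B_k)/2$ with $k=\nu(n-2)$. I would derive this by the standard device of adding auxiliary elements: $B_{k+2}$ counts set-partitions of $k$ elements with two extra labelled points, $B_{k+1}$ subtracts the configurations where the two extra points coincide in a block (overcounting), and $B_k$ restores the term where both extra points are together; the division by $2$ accounts for the symmetry $a\leftrightarrow b$ since $\lambda$ is an unordered partition. The main obstacle will be making the bookkeeping of the binary carry from $a+b$ precise — specifically, verifying that requiring $a,b$ odd with $\lambda$ neat after removal forces exactly the disjointness condition on $\bin(a-1),\bin(b-1)$ inside $\bin(n-2)$, with no spurious carry beyond the lowest bit — and then matching the inclusion–exclusion count to the claimed alternating sum of Bell numbers. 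Once that correspondence is pinned down, the formula $(B_{\nu(n-2)+2}-B_{\nu(n-2)+1}+B_{\nu(n-2)})/2$ follows directly.
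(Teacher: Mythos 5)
Your overall route is the same as the paper's: translate chirality (via Lemma~\ref{lemma:reduce-two-parts} and Theorem~\ref{theorem:odd-multi}) into the condition that $(a-1,\,b-1,\,\mu_1,\dotsc,\mu_t)$ is a neat partition of $n-2$, identify such data with set-partitions of $\bin(n-2)$ carrying two marked blocks, and count those by adjoining two auxiliary labelled points and symmetrizing. One remark: your worry about ``binary carries'' is moot. Theorem~\ref{theorem:odd-multi} applied to $\binom{n-2}{a-1,b-1,\mu_1,\dotsc,\mu_t}$ \emph{is} the exact condition, and all other summands in Lemma~\ref{lemma:reduce-two-parts} are automatically even: since $n-2$ is even, $0\notin\bin(n-2)$, so a neat partition of $n-2$ has no odd entry, while reducing any pair other than the two odd parts leaves at least two odd entries. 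No separate carry analysis is needed.

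There is, however, one genuine slip: you require the two distinguished blocks to be \emph{non-empty}. They must be allowed to be empty, because an odd part $a=1$ corresponds to the empty marked block ($a-1=0$). For instance, with $n=4$ both chiral partitions with two odd parts, namely $(3,1)$ and $(1,1,2)$, require at least one empty marked block, so enforcing non-emptiness would give $0$ instead of the correct count $2=(B_3-B_2+B_1)/2$. Relatedly, your bookkeeping of the $+B_k$ term is off: it does not ``restore configurations where the two extra points are together'' --- those are excluded outright by the $-B_{k+1}$ term --- rather it is the correction for the both-marked-blocks-empty configurations, i.e.\ those set-partitions of $\bin(n-2)\sqcup\{x,y\}$ in which $x$ and $y$ lie in separate singleton blocks; these are fixed by the swap $x\leftrightarrow y$ and hence arise from one, not two, ordered configurations. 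This is exactly how the paper concludes: $(B_{k+2}-B_{k+1}-B_k)/2$ marked partitions with not both marked blocks empty, plus $B_k$ with both empty, giving $(B_{k+2}-B_{k+1}+B_k)/2$ with $k=\nu(n-2)$. Your own auxiliary-point derivation silently allows empty marked blocks and so lands on the right formula, but as written the stated combinatorial model contradicts it; fix the model, not the identity.
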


\begin{proof} We must count the number of partitions $\lambda$ of $n$ which have exactly two odd parts, and so that the partition obtained by removing these odd parts from $\la$ is neat.  Equivalently, the partition of $n-2$ obtained by subtracting $1$ from these odd parts is neat.
Thus we need to count the number of set-partitions of $\bin(n-2)$ with two marked parts, these two parts being allowed to be empty (adding $1$ to these parts will give the desired partition of $n$).  To do this, add two new elements, say $a$ and $b$, to the set $\bin(n-2)$.
This set has $B_{\nu(n-2)+2}$  set-partitions, but $B_{\nu(n-1)+1}$ of these have $a$ and $b$ in the same part, and should be excluded from counting.
Except when $a$ and $b$ both lie in singleton parts (and there are $B_{\nu(n-2)}$ such partitions), each of these set partitions of $\bin(n-2)\sqcup \{a,b\}$ give rise to two set-partitions of $\bin(n-2)$ with two marked parts.
So the number of set-partitions of $\bin(n-2)$ with two marked parts, where the marked parts are not both empty, is given by
\begin{displaymath}
  (B_{\nu(n-2)+2} - B_{\nu(n-2) + 1} - B_{\nu(n-2)})/2.
\end{displaymath}

There remain the set partitions of $\bin(n-2)$ with two marked parts where both marked parts are empty.
By simply ignoring the marked parts, these are in bijective correspondence with the $B_{\nu(n-2)}$ set-partitions of $\bin(n-2)$.
The lemma follows.
\end{proof}

\begin{lemma} \label{count3} Let $n \geq 3$ be odd.  The number of partitions $\la$ of $n$ with exactly three odd parts and with $\C[X_\la]$ chiral, is equal to
\begin{displaymath}
  \frac{1}{6}(B_{\nu(n-3)+3} - 3B_{\nu(n-3)+2} + 5B_{\nu(n-3)+1} + 2B_{\nu(n-3)}).
\end{displaymath}
\end{lemma}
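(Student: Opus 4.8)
The plan is to adapt the argument of Lemma~\ref{count2}, replacing its two marked parts by three and paying for this with a more delicate passage from ordered to unordered data. First I would reduce the chirality condition to a neatness condition. By the three-odd-parts case in the proof of Theorem~\ref{perm.chir.descr} (that is, applying Lemma~\ref{ingredien} to the terms of Lemma~\ref{lemma:reduce-two-parts}), a partition $\la$ of the odd integer $n$ with exactly three odd parts $a,b,c$ is chiral if and only if the partition of $n-3$ obtained by replacing $a,b,c$ with $a-1,b-1,c-1$ and retaining the even parts of $\la$ is neat. Since $n$ is odd, $n-3$ is even, so $0\notin\bin(n-3)$, and by Theorem~\ref{theorem:odd-multi} every neat partition of $n-3$ has all parts even and corresponds to a set-partition of $\bin(n-3)$. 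Hence the partitions to be counted are in bijection with set-partitions of $\bin(n-3)$ equipped with an unordered triple of marked blocks, where a marked block may be empty (contributing an odd part equal to $1$) and where two distinct non-empty marked blocks are automatically unequal as integers, since equal positive parts would share binary support and violate neatness.

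Next I would count an ordered version and then correct. Following Lemma~\ref{count2}, adjoin three auxiliary elements $x,y,z$ to $\bin(n-3)$ and let $D$ be the number of set-partitions of $\bin(n-3)\sqcup\{x,y,z\}$ in which $x,y,z$ occupy three distinct blocks; reading off the block of each auxiliary element, with that element deleted, identifies these bijectively with ordered triples of marked blocks, each possibly empty and with distinct non-empty entries. Writing $\nu=\nu(n-3)$ and applying M\"obius inversion over the partition lattice of $\{x,y,z\}$ (equivalently, inclusion--exclusion over which pairs are forced together), I obtain
\[ D = B_{\nu+3}-3B_{\nu+2}+2B_{\nu+1}. \]

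The heart of the matter is the passage from these ordered triples to the unordered triples that actually index partitions, where the fibre size depends on how many marked blocks are empty: a triple with at most one empty block arises from $3!=6$ ordered triples, one with exactly two empty blocks from $3$, and the all-empty triple from $1$. Letting $V_j$ denote the number of admissible $\la$ with exactly $j$ odd parts equal to $1$, this gives $D = 6(V_0+V_1)+3V_2+V_3$. The small fibres are computed directly: $V_3=B_\nu$ (all three odd parts equal $1$, the even parts forming an arbitrary neat partition of $n-3$), while $V_2=B_{\nu+1}-B_\nu$, the number of set-partitions of a $\nu$-element set with one distinguished block. The quantity sought is then $V=(V_0+V_1)+V_2+V_3=\frac{1}{6}(D-3V_2-V_3)+V_2+V_3$; substituting the values of $D$, $V_2$, and $V_3$ and simplifying yields $\frac{1}{6}(B_{\nu+3}-3B_{\nu+2}+5B_{\nu+1}+2B_{\nu})$, as claimed.

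I expect the main obstacle to be exactly this over-counting bookkeeping: pinning down the fibre sizes $6,3,1$ for the various empty-block patterns, and verifying that non-empty marked blocks are forced to be distinct so that no further symmetry corrections are needed. The evaluation of $D$ and the final algebra are routine; a reassuring sanity check is that the same scheme, run with only two auxiliary elements and the fibre sizes $2,1$, reproduces the formula of Lemma~\ref{count2}.
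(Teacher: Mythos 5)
Your proposal is correct and takes essentially the same approach as the paper: the paper's proof is only a sketch (``adding three elements to $\bin(n-3)$ and making the necessary corrections''), and your argument—reducing chirality via Lemma~\ref{ingredien} to neatness of the partition of $n-3$, adjoining $x,y,z$, computing $D=B_{\nu+3}-3B_{\nu+2}+2B_{\nu+1}$ by inclusion--exclusion, and correcting with fibre sizes $6,3,1$—is exactly that plan carried out in full. The bookkeeping (distinctness of non-empty marked blocks, $V_2=B_{\nu+1}-B_\nu$, $V_3=B_\nu$) and the final simplification all check out.
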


\begin{proof} This is similar to the previous calculation.  Such partitions of $n$ correspond to neat partitions of $n-3$ with three distinguished parts.  Once again, adding three elements to $\bin(n-3)$ and making the necessary corrections gives the result.
\end{proof}

The part of $c(n)$ corresponding to partitions with exactly one odd part is surprisingly difficult to compute, although the answer is simple.

\begin{theorem} \label{surprisingly} \label{count1} Let $n \geq 1$ be odd.  The number of partitions $\la$ of $n$ with exactly one odd part and with $\C[X_\la]$ chiral, is equal to
\begin{displaymath}
B_{\nu(n)+k-2}+B_{\nu(n)}-2B_{\nu(n)-1},
\end{displaymath}
\end{theorem}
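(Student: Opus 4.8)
The plan is to start from the characterization in Lemma~\ref{lemma:one-odd-part}. A partition $\la$ of $n$ with exactly one odd part amounts to a pair $(a,\mu)$, where $a$ is an odd positive integer and $\mu$ is the partition of $n-a$ consisting of the remaining (even) parts of $\la$. By Lemma~\ref{lemma:one-odd-part}, such a $\la$ is chiral exactly when $\bin(a)\subseteq\bin(n-2)$ and $\mu$ is neat. Since $n-a$ is even we have $0\notin\bin(n-a)$, so every neat partition of $n-a$ automatically has even parts; as the number of neat partitions of an integer $N$ is $B_{\nu(N)}$, the quantity to compute is
\begin{displaymath}
  \sum_{a}\, B_{\nu(n-a)},
\end{displaymath}
the sum ranging over odd $a$ with $\bin(a)\subseteq\bin(n-2)$.

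Next I would do the binary bookkeeping. Writing $v_2(n-1)=k$, one checks that $\bin(n)=\{0,k\}\cup H'$ and $\bin(n-2)=\{0,1,\dots,k-1\}\cup H'$ for a common set $H'\subseteq\{k+1,k+2,\dots\}$ of high bits; in particular $\nu(n)=|H'|+2$ and $\nu(n-2)=|H'|+k$, so that with $t:=|H'|$ the target reads $B_{t+k}+B_{t+2}-2B_{t+1}$. An admissible $a$ is a submask of $\bin(n-2)$ containing the bit $0$, hence splits uniquely as $a=a_L+a_H$ with $a_L$ odd and $\bin(a_L)\subseteq\{0,\dots,k-1\}$, and $\bin(a_H)=S_H\subseteq H'$. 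Subtracting, $n-a=(1+2^k-a_L)+\sum_{h\in H'\setminus S_H}2^h$, and since the two summands occupy disjoint ranges of bits, $\nu(n-a)=\nu(c)+(t-|S_H|)$, where $c:=1+2^k-a_L$ runs bijectively over the even integers in $[2,2^k]$ as $a_L$ runs over the odd integers in $[1,2^k-1]$. Grouping the sum over $S_H$ by cardinality then reduces the count to
\begin{displaymath}
  \sum_{u=0}^{t}\binom{t}{u}P_k(u),\qquad
  P_k(u):=\sum_{\substack{c\ \mathrm{even}\\ 2\le c\le 2^k}}B_{\nu(c)+u}.
\end{displaymath}

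The remaining work is to evaluate this double sum. Counting the even $c\in[2,2^k]$ with a prescribed value of $\nu(c)$ yields $P_k(u)=kB_{u+1}+\sum_{i=2}^{k-1}\binom{k-1}{i}B_{u+i}$, where the coefficient of $B_{u+1}$ is anomalous because all the powers of two $2,4,\dots,2^k$ land in the $\nu=1$ class. Writing $T(i):=\sum_{u}\binom{t}{u}B_{u+i}$ and completing the binomial coefficients turns the count into $\sum_{i=0}^{k-1}\binom{k-1}{i}T(i)+T(1)-T(0)$. Both ingredients follow from the single identity $\sum_{w}\binom{N}{w}B_w=B_{N+1}$: expanding $\sum_i\binom{k-1}{i}T(i)$ as a sum over the subsets of a $(k-1+t)$-element set gives $B_{t+k}$, while the same principle yields $T(0)=B_{t+1}$ and $T(1)=B_{t+2}-B_{t+1}$. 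Assembling the pieces gives $B_{t+k}+B_{t+2}-2B_{t+1}$, which is the claimed $B_{\nu(n)+k-2}+B_{\nu(n)}-2B_{\nu(n)-1}$.

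The main obstacle is the binary-arithmetic step of controlling $\nu(n-a)$ as $a$ varies. The crucial point is that the hypothesis $\bin(a)\subseteq\bin(n-2)$ confines the borrow in $n-a$ to the bottom $k$ bits, so that the high bits factor off cleanly as the binomial weight $\binom{t}{u}$ while the low bits generate exactly the distribution recorded by $P_k(u)$; pinning down this distribution, and especially the exceptional coefficient $k$, is what makes this ``surprisingly difficult'' case genuinely delicate. Once the problem has been put in the shape of the displayed double sum, the Bell-number identities are routine consequences of $\sum_w\binom{N}{w}B_w=B_{N+1}$.
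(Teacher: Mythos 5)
Your proposal is correct, and it departs from the paper only after a common first step: both arguments use Lemma~\ref{lemma:one-odd-part} to reduce the count to $\sum_a B_{\nu(n-a)}$, summed over odd $a$ with $\bin(a)\subseteq\bin(n-2)$ (the paper's expression (\ref{eq:19}) is exactly this sum made explicit). From there the routes genuinely differ. The paper controls the borrow in $n-a$ by conditioning on the least element $s$ of $\bin(a)\cap\{1,\dotsc,k-1\}$, which produces a triple sum plus a separate boundary sum, and then evaluates via finite-difference calculus (Lemma~\ref{lemma:Delta}, applied twice, with Chu--Vandermonde inside its proof). Your reflection $a_L\mapsto c=2^k+1-a_L$, a bijection from the odd integers in $[1,2^k-1]$ to the even integers in $[2,2^k]$, handles the carry uniformly in one stroke: the paper's boundary case $\bin(a)\cap\{1,\dotsc,k-1\}=\emptyset$ is simply $c=2^k$, and the anomalous coefficient $k$ on $B_{u+1}$ (the powers of two $2,4,\dotsc,2^k$) becomes visible immediately. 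Your evaluation then needs nothing beyond the Bell recursion $\sum_w\binom Nw B_w=B_{N+1}$ and Vandermonde: with $t=\nu(n)-2$ and $T(i)=\sum_u\binom tu B_{u+i}$, the identities $T(0)=B_{t+1}$, $T(1)=B_{t+2}-B_{t+1}$, $\sum_{i=0}^{k-1}\binom{k-1}{i}T(i)=B_{t+k}$, and the rewriting $kT(1)+\sum_{i=2}^{k-1}\binom{k-1}{i}T(i)=\sum_{i=0}^{k-1}\binom{k-1}{i}T(i)+T(1)-T(0)$ all check out, and the final value $B_{t+k}+B_{t+2}-2B_{t+1}$ agrees with the claimed formula (I also verified it against direct enumeration at $n=5$ and $n=7$). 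What the paper's route buys is a reusable difference-calculus lemma in the same spirit as the counts in Lemmas~\ref{count2} and~\ref{count3}; what yours buys is the elimination of the case split on the least set bit and a shorter, more elementary evaluation. The one caveat, shared with the paper, is that the argument implicitly assumes $n\geq 3$, so that $k=v_2(n-1)$ is defined and $\bin(n)=\{0,k\}\cup H'$ makes sense.
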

 
\begin{proof}
The number of partitions satisfying the conditions of Lemma~\ref{lemma:one-odd-part} is
\begin{multline}
  \label{eq:19}
  \sum_{r = 0}^{\nu(n) - 2}\sum_{s = 1}^{k-1}\sum_{t = 0}^{k - s- 1}\binom{\nu(n) -2}{r}\binom{k - s - 1}{t}B_{\nu(n) + k - r - s - t - 2} \\
  + \sum_{r = 0}^{\nu(n)-2} \binom{\nu(n) - 2}{r}B_{\nu(n) - 1 - r}.
\end{multline}
where $k = v_2(n - 1)$, and $\nu(n)$ is the number of $1$'s in the binary expansion of $n$.

The reason is as follows: if $v_2(n-1) = k$, then the binary expansions of $n$ and $n-2$ are of the form:
\begin{displaymath}
  \begin{matrix}
    n = &* & * & \dotsc & * & 1 & 0 & \dotsc & 0 & 1\\
     & &   &        &   & \uparrow & & & & \\
     & &   &        &   & k & &&&\\
     & &   &        &   & \downarrow  & &&&\\
     n -2 = & * & * & \dotsc & * & 0 & 1 & \dotsc & 1 & 1\\
    \end{matrix}
\end{displaymath}
The odd part $a$ is obtained by choosing $\bin(a)\subseteq \bin(n-2)$.
Suppose that $\bin(a)$ has an element $1<i\leq k-1$.
Let $s$ be the least such $i$.
Suppose $|\bin(a) \cap \{k + 1, k + 2, \dots\}| = r$ and $|\bin(a) \cap \{s+1, \dots, k-1\}| = t$.
The number of ways of choosing these subsets  is $\binom{\nu(n) - 2}{r} \times \binom{k - s - 1}{r}$.
Once these are chosen, $|\bin(n-a)\cap \{k+1,k + 2,\dotsc\}| = \nu(n) - 2 - r$.
Also, $|\bin(n-a)\cap \{s + 1,\dots, k-1\}| = k -s - 1 - t$.
So
\begin{displaymath}
  |\bin(n-a)| = [\nu(n) - 2 - r] + [k - s - 1 - t] + 1,
\end{displaymath}
giving rise to the first term.

The second term corresponds to those values of $a$ for which $\bin(a)\cap \{2,\dots, k-1\} = \emptyset$.
Let $r = |\bin(a)\cap \{k+1,k+2,\dotsc\}|$.
The number of choices for $\bin(a)$ is then $\binom{\nu(n) - 2}{r}$.  This concludes our explanation for the expression (\ref {eq:19}).

For a sequence $f_n$, recall from finite difference calculus the $j$th differential of $f_n$, a sequence defined by:
\begin{displaymath}
(\Delta^j f)_n = \sum_{i=0}^j (-1)^i \, \binom ji f_{n+j-i}.
\end{displaymath}

One recovers $f$ from its differentials via ``Taylor's Formula"
\begin{displaymath}
f_{m+n}=\sum_{i=0}^n \binom{n}{i}(\Delta^i f)_m.
\end{displaymath}

\begin{lemma}
\label{lemma:Delta}
For $j \geq 0$, we have
\begin{displaymath}
\sum_{i=0}^n \binom{n}{i}B_{n+j-i} =(\Delta^j B)_{n+1}.
\end{displaymath}
\end{lemma}
 
\begin{proof}
Start with the explicit expression for the right hand side,
\[
\sum_{i=0}^j (-1)^i \binom ji B_{n+1+j-i}.
\]
Using the fundamental recursion for the Bell numbers  
in the above formula, we get
\[
\sum_{i=0}^j (-1)^i \binom ji \sum_{k=0}^{n+j-i} \binom{n+j-i}k B_{n+j-i-k}.
\]
We now replace the $k$ sum by $\ell = k+i$ and interchange the $\ell$ and $i$ sum to get
\[
\left( \sum_{\ell=0}^j \sum_{i=0}^\ell + \sum_{\ell=j+1}^{j+n} \sum_{i=0}^j \right)
(-1)^i \binom ji \binom{n+j-i}{\ell-i}  B_{n+j-\ell}.
\]
An application of the well-known Chu-Vandermonde identity,
\[
\sum_{i=0}^{\min(j,\ell)} (-1)^i \binom ji \binom{n+j-i}{\ell-i} = \binom n\ell,
\]
collapses the two sums above and gives the desired result.
\end{proof}
 
\begin{lemma} We have
\begin{multline*}  
 \sum_{r = 0}^{\nu(n) - 2}\sum_{s = 1}^{k-1}\sum_{t = 0}^{k - s- 1}\binom{\nu(n) -2}{r}\binom{k - s - 1}{t}B_{\nu(n) + k - r - s - t - 2} \\ =B_{\nu(n)+k-2}-B_{\nu(n)-1}.
\end{multline*}
\end{lemma}

\begin{proof}
Take the $r$-sum inside and apply Lemma~\ref{lemma:Delta} to obtain
\begin{displaymath}
\begin{split}
\sum_{s=1}^{k-1} \sum_{t=0}^{k-s-1} \binom{k-s-1}{t} (\Delta^{k-s-t}B)_{\nu(n)-1} &= \sum_{s=0}^{k-2} \sum_{t=0}^s \binom{s}{t} (\Delta^{t+1}B)_{\nu(n)-1} \\
&= \sum_{t=0}^{k-2} (\Delta^{t+1}B)_{\nu(n)-1} \sum_{s=t}^{k-2} \binom{s}{t} \\
&= \sum_{t=0}^{k-2} \binom{k-1}{t+1} (\Delta^{t+1}B)_{\nu(n)-1} \\
&= \sum_{t=1}^{k-1} \binom{k-1}{t} (\Delta^{t}B)_{\nu(n)-1} \\
&= B_{\nu(n)+k-2}-B_{\nu(n)-1}.
\end{split}
\end{displaymath}
as claimed.  (We have used Lemma \ref{lemma:Delta}  again in the last equation.)
\end{proof}
 
The second sum in (\ref{eq:19}) gives, again using Lemma~\ref{lemma:Delta}, $B_{\nu(n)}-B_{\nu(n)-1}$ (the boundary term).  Finally  for (\ref{eq:19}), we obtain
\begin{displaymath}
B_{\nu(n)+k-2}+B_{\nu(n)}-2B_{\nu(n)-1},
\end{displaymath}
completing the proof of Theorem \ref{surprisingly}.
\end{proof}

Theorem \ref{theorem:perm-rep} now follows from  Theorem \ref{perm.chir.descr} and Lemmas  \ref{count2}, \ref{count3}, and \ref{count1}.

\subsection*{Acknowledgements}
We thank Dipendra Prasad for encouraging us to work on this problem.
We thank A.~Raghuram for suggesting a preliminary version of Corollary~\ref{corollary:self-conj}.
This research was driven by computer exploration using the open-source
mathematical software \texttt{Sage}~\cite{sage} and its algebraic
combinatorics features developed by the \texttt{Sage-Combinat}
community~\cite{Sage-Combinat}. AA was partially supported by UGC centre for Advanced Study grant. AP was partially supported by a Swarnajayanti Fellowship of the Department of Science \& Technology (India).
\bibliographystyle{abbrv}
\bibliography{refs}
\end{document}